\documentclass[10pt]{article}

\usepackage[paperwidth=6in,paperheight=9in,left=0.5in,right=0.5in,top=0.65in,bottom=0.65in]{geometry}
\usepackage{amsmath,amssymb,amsthm,mathtools,bm}
\usepackage{newtxtext,newtxmath}
\usepackage{graphicx}
\usepackage{cite}
\usepackage{booktabs,array,multirow}
\usepackage{microtype}
\usepackage{enumitem}
\usepackage{hyperref}
\usepackage{aliascnt}
\usepackage{cleveref}
\usepackage{xcolor}
\usepackage{placeins}

\hypersetup{
  colorlinks=true,
  linkcolor=blue!55!black,
  citecolor=blue!55!black,
  urlcolor=blue!55!black,
  pdftitle={High-Dimensional Spectral Limits for Gaussian KL-Unbalanced Optimal Transport},
  pdfauthor={Jiaping Yang and Yunxin Zhang}
}
\numberwithin{equation}{section}

\newtheorem{theorem}{Theorem}[section]
\newaliascnt{proposition}{theorem}
\newtheorem{proposition}[proposition]{Proposition}
\aliascntresetthe{proposition}
\newaliascnt{lemma}{theorem}
\newtheorem{lemma}[lemma]{Lemma}
\aliascntresetthe{lemma}
\newaliascnt{corollary}{theorem}
\newtheorem{corollary}[corollary]{Corollary}
\aliascntresetthe{corollary}
\newaliascnt{assumption}{theorem}
\newtheorem{assumption}[assumption]{Assumption}
\aliascntresetthe{assumption}
\theoremstyle{definition}
\newaliascnt{definition}{theorem}
\newtheorem{definition}[definition]{Definition}
\aliascntresetthe{definition}
\newaliascnt{remark}{theorem}
\newtheorem{remark}[remark]{Remark}
\aliascntresetthe{remark}

\crefname{theorem}{Theorem}{Theorems}
\Crefname{theorem}{Theorem}{Theorems}
\crefname{proposition}{Proposition}{Propositions}
\Crefname{proposition}{Proposition}{Propositions}
\crefname{lemma}{Lemma}{Lemmas}
\Crefname{lemma}{Lemma}{Lemmas}
\crefname{corollary}{Corollary}{Corollaries}
\Crefname{corollary}{Corollary}{Corollaries}
\crefname{assumption}{Assumption}{Assumptions}
\Crefname{assumption}{Assumption}{Assumptions}
\crefname{definition}{Definition}{Definitions}
\Crefname{definition}{Definition}{Definitions}
\crefname{remark}{Remark}{Remarks}
\Crefname{remark}{Remark}{Remarks}

\newcommand{\R}{\mathbb R}

\newcommand{\tr}{\operatorname{tr}}
\newcommand{\KL}{\operatorname{KL}}
\newcommand{\diag}{\operatorname{diag}}
\newcommand{\spec}{\operatorname{spec}}

\newcommand{\Acal}{\mathcal A}
\newcommand{\Ucal}{\mathcal U}
\newcommand{\N}{\mathcal N}

\newcommand{\op}{\mathrm{op}}
\newcommand{\as}{\mathrm{a.s.}}
\newcommand{\ip}[2]{\left\langle #1,#2\right\rangle_F}
\newcommand{\normF}[1]{\left\|#1\right\|_F}
\newcommand{\normop}[1]{\left\|#1\right\|_{\op}}
\newcommand{\dd}{\,\mathrm d}

\begin{document}
\begin{center}
{\bfseries\large High-Dimensional Spectral Limits for Gaussian KL-Unbalanced Optimal Transport\par}
\vspace{8pt}
{\small Jiaping Yang \quad and \quad Yunxin Zhang\par}
{\small School of Mathematical Sciences, Fudan University, Shanghai 200433, China. \par}
\end{center}
\vspace{5pt}

\begin{abstract}
\small
We study high-dimensional random-matrix limits of Gaussian Kullback--Leibler unbalanced optimal transport (KL-UOT).  Under equal marginal penalties, the covariance action admits an exact log-determinant representation in terms of a nonlinear ridge product, together with a positive-semidefinite extension that remains finite at arbitrary aspect ratios.  For independent real Wishart samples, strong asymptotic freeness gives the limiting free multiplicative convolution and almost-sure Hausdorff convergence of the ridge-product spectrum; independent Haar orientations yield the corresponding first-order limit for deformed populations.  In the symmetric nonsingular identity-Wishart model, we derive an explicit $\eta$-transform and a low-degree algebraic equation that select the physical branch and determine the support interval, square-root edges, and extreme-eigenvalue limits.  We further obtain all-aspect one-sample Marchenko--Pastur limits under finite fourth moments, real-Gaussian Bai--Silverstein fluctuations for $c<1$, and a joint random-matrix/penalty limit showing that sample-covariance noise produces the critical scale $\tau_p\asymp p$.
\end{abstract}

{\small\noindent\textbf{Keywords.} linear spectral statistics; sample covariance matrices; free multiplicative convolution; Gaussian unbalanced optimal transport; Marchenko--Pastur law.\par}

\noindent\textbf{Mathematics Subject Classification 2020.} 60B20; 15B52; 46L54; 62H15; 49Q22.

\section{Introduction}

Spectral functionals of sample covariance matrices are central to random matrix theory and high-dimensional statistics.  When the dimension $p$ is comparable to the sample size $n$, individual sample eigenvalues no longer consistently estimate their population counterparts.  Their collective behavior is instead described by Marchenko--Pastur limits, deterministic equivalents, and central limit theorems for linear spectral statistics (LSS)~\cite{MarchenkoPastur1967,SilversteinChoi1995,BaiSilverstein2004,BaiSilverstein2010}.  These tools underlie methods for covariance estimation, signal processing, and high-dimensional testing~\cite{CouilletDebbah2011,YaoZhengBai2015,HachemLoubatonNajim2007}, including recent analyses of rescaled sample correlation matrices and structured covariance tests~\cite{ChenZhengZou2026,WangEtAl2026}.

This paper studies the random-matrix asymptotics of the covariance functional induced by Gaussian Kullback--Leibler unbalanced optimal transport (KL-UOT).  For Gaussian probability measures $\mu_i=\N(m_i,\Sigma_i)$, quadratic balanced transport has the closed form
\begin{equation}\label{eq:gaussian-w2}
W_2^2(\mu_0,\mu_1)
=\|m_0-m_1\|^2+\tr\Sigma_0+\tr\Sigma_1
-2\tr\!\left(\Sigma_1^{1/2}\Sigma_0\Sigma_1^{1/2}\right)^{1/2}.
\end{equation}
Its covariance term is the Bures--Wasserstein distance~\cite{Gelbrich1990,Takatsu2011,BhatiaJainLim2019}; the associated product spectral structure has already led to random-matrix corrections for high-dimensional Gaussian Wasserstein estimation~\cite{TiomokoCouillet2019}.  Related corrections are available for broader families of covariance-matrix distances and divergences~\cite{CouilletTiomokoZozorMoisan2019}, for asymptotic distances between sample covariance matrices~\cite{PereiraMestreGregoratti2024}, and for Fr\'echet means on the positive-definite cone~\cite{BouchardEtAl2024}.  These works provide the natural comparison point for our setting.  At fixed penalty, however, the KL-UOT functional studied below is not simply a function of $\Sigma_0^{-1}\Sigma_1$; it depends on a nonlinear ridge product that retains information about the relative eigenvectors in the two-sample regime.  This distinction is not merely a change of scalar test function.  Under equal penalties, KL-UOT first applies the nonlinear ridge map $h_r(x)=rx/(1+rx)$ to each covariance matrix and then couples the transformed matrices through $R_1^{1/2}R_0R_1^{1/2}$.  In the asymptotically free regime analyzed below, the two-sample limit is governed by the joint ridge-product law rather than by the two marginal sample spectra separately.

For finite measures $\alpha=a\mu_0$ and $\beta=b\mu_1$, we consider the KL-relaxed endpoint problem
\begin{equation}\label{eq:uot-def}
\Ucal_{\tau_0,\tau_1}(\alpha,\beta)
:=\inf_{\gamma\ge0}
\left\{
\int\|x-y\|^2\,\dd\gamma(x,y)
+\tau_0\KL(\gamma_0\mid\alpha)
+\tau_1\KL(\gamma_1\mid\beta)
\right\},
\end{equation}
Here $\tau_0,\tau_1>0$, and the coupling itself carries no entropy penalty; Section~\ref{sec:spectral} fixes the finite-measure KL convention.  Entropy-transport, dynamic, and Kantorovich formulations of unbalanced transport are developed in~\cite{LieroMielkeSavare2018,ChizatDynamic2018,ChizatEtAl2018,SejournePeyreVialard2023}.  Recent work has also addressed semi-dual estimation, stability, geometry, and minimax transport-growth estimation under quadratic cost and KL marginal penalties~\cite{VacherVialard2023,GallouetGhezziVialard2025,PonnopratIsobeImaizumi2026}.  Our question is different: we ask what happens when Gaussian covariance matrices are replaced by high-dimensional sample covariances, so that nonlinear spectral noise persists at leading order.

For Gaussian inputs, the endpoint problem without coupling entropy has an explicit finite-dimensional solution with adjusted Gaussian marginals and a matrix Riccati equation~\cite{YangZhang2026}.  Under equal penalties, this formulation coincides with the Gaussian Hellinger--Kantorovich (GHK) model in Janati's Gaussian endpoint terminology, not with the geodesic HK/Wasserstein--Fisher--Rao metric; Janati's thesis records the corresponding symmetric Gaussian value (Proposition~14, up to parameter normalization)~\cite{JanatiThesis2021}.  Related formulas with coupling entropy appear in~\cite{JanatiEtAl2020}, and an independent control-theoretic Gaussian reduction was developed in~\cite{NakashimaEtAl2026}.  Section~\ref{sec:spectral} recalls the finite-dimensional ingredients only to fix notation; derivations are given in Appendix~\ref{app:gaussian-derivations}.  The random-matrix contribution begins with the spectral reformulation in Theorem~\ref{thm:exact-spectral} and Corollary~\ref{cor:ridge-product}, followed by the proportional-dimensional, fluctuation, and free convolution analyses.

In proportional dimension, the deterministic Gaussian calculation becomes a random spectral problem.  Let
\begin{equation}\label{eq:intro-sample-cov}
S_{i,p}=\Sigma_{i,p}^{1/2}\frac{X_{i,p}^{\top}X_{i,p}}{n_i}\Sigma_{i,p}^{1/2},
\qquad \frac{p}{n_i}\longrightarrow c_i\in(0,\infty),
\end{equation}
and, in the equal-penalty case $r=2/\tau$, define the ridge transforms
\begin{equation}\label{eq:intro-ridge}
\widehat R_{i,p}=rS_{i,p}(I+rS_{i,p})^{-1}.
\end{equation}
The two-sample KL-UOT covariance action is governed by the noncommutative product
\begin{equation}\label{eq:intro-ridge-product}
\widehat R_{1,p}^{1/2}\widehat R_{0,p}\widehat R_{1,p}^{1/2}.
\end{equation}
The high-dimensional analysis reduces to nonlinear spectral functionals of $\widehat R_{i,p}$ and, in the two-sample problem, to the ridge product in \eqref{eq:intro-ridge-product}.  When $c_i<1$ these matrices are eventually positive definite and retain the original nondegenerate-Gaussian KL-UOT interpretation.  For $c_i\ge1$ we use the positive-semidefinite spectral extension introduced below; this keeps the random-matrix functional well defined without asserting a singular-Gaussian KL formula.  These objects place Gaussian KL-UOT within the settings of LSS, strong asymptotic freeness, and multiplicative free convolution~\cite{VoiculescuDykemaNica1992,NicaSpeicher2006,CollinsMale2014,BercoviciVoiculescu1993}.

The main results fall into four groups, with the two-sample ridge product as the central random-matrix object.
\begin{enumerate}[label=(\roman*),leftmargin=2.2em]
\item \textit{Spectral reformulation and positive-semidefinite extension.}  The finite-dimensional Gaussian endpoint solution is rewritten as an exact log-determinant spectral formula.  Under equal penalties it reduces to a ridge-product identity, which extends continuously to positive-semidefinite covariance matrices and therefore remains meaningful in rank-deficient proportional regimes.
\item \textit{Two-sample ridge-product laws and KL-UOT-specific edge identification.}  For independent identity-Wishart samples at arbitrary aspect ratios $c_i>0$, strong asymptotic freeness gives the free multiplicative-convolution law and Hausdorff spectral convergence.  Independent Haar orientations give the corresponding first-order limit for deformed populations.  In the symmetric nonsingular case, an explicit $\eta$-transform and low-degree algebraic equation select the physical branch, identify the single support interval and square-root edges, and thereby determine the almost-sure limits of the empirical extreme eigenvalues.
\item \textit{One-sample Marchenko--Pastur and LSS consequences.}  For a compactly supported positive population spectrum, deformed Marchenko--Pastur convergence gives the normalized first-order spectral limit for every $c\in(0,\infty)$ throughout the i.i.d. finite-fourth-moment class.  Under the exact identity null the limit is strictly positive.  The real-Gaussian Bai--Silverstein fluctuation result remains in the analytic nonsingular regime $c<1$.
\item \textit{Penalty scaling and statistical consequences.}  Besides the deterministic Gaussian phase diagram, a joint random-matrix/penalty limit shows that sample-covariance noise alone generates an $O(p)$ discrepancy and hence the same critical scale $\tau_p\asymp p$.  At the deterministic critical scale, balanced-Wasserstein consistency still passes through a dimension-free Lipschitz transform.  Secondary testing diagnostics remain in the Supplementary Numerical Material.
\end{enumerate}

The Marchenko--Pastur and Bai--Silverstein theorems, strong/Haar freeness, multiplicative subordination, and free-convolution regularity serve as external inputs.  Against this background, the model-specific contributions are the KL-UOT ridge-product reduction and positive-semidefinite continuation, the explicit symmetric $\eta$-transform and algebraic physical branch with edge identification, and the joint random-matrix/penalty phase diagram.  Proposition~\ref{prop:free-ridge-transfer} provides the functional-calculus bridge between these ingredients.

The asymptotic regimes differ across the results.  The first-order one-sample theory allows every $c>0$, compactly supported positive population spectra, and i.i.d. entries with finite fourth moment; the real-Gaussian LSS calibration remains in the nonsingular regime $c<1$.  The identity-Wishart two-sample weak and strong limits allow $c_i>0$, whereas the explicit algebraic support and edge regularity are developed for $c_i<1$.  Independent Haar orientations give deformed-population first-order limits.  When finite sample covariances are singular, we use only the positive-semidefinite spectral extension and make no singular-Gaussian KL interpretation.

Sections~\ref{sec:spectral}--\ref{sec:critical-estimation} develop the spectral representation, one-sample benchmark, and two-sample ridge-product theory.  Section~\ref{sec:phase} treats penalty scaling, followed by numerical illustrations in Section~\ref{sec:numerics}.  Mathematical derivations deferred from the main line are collected in Appendices~\ref{app:gaussian-derivations}--\ref{app:one-sample-consequences}, while computational diagnostics remain in the Supplementary Numerical Material.

\section{Spectral formulation of Gaussian KL-UOT}\label{sec:spectral}

Throughout, for finite nonnegative measures $\rho$ and $\eta$ we use the generalized Kullback--Leibler divergence
\begin{equation}\label{eq:finite-kl}
\KL(\rho\mid\eta)=
\begin{cases}
\displaystyle\int(r\log r-r+1)\,\dd\eta,&\rho=r\eta,\\[1ex]
+\infty,&\rho\not\ll\eta,
\end{cases}
\end{equation}
with the convention $0\log0=0$.

The finite-dimensional inputs needed below are the mass separation, Gaussian projection, mean resolvent, and covariance Riccati equation from~\cite{YangZhang2026}.  We collect them in one proposition, with derivations deferred to Appendix~\ref{app:gaussian-derivations}, so that the random-matrix development remains separate from the endpoint calculation.

\subsection{Finite-dimensional Gaussian identities}

For probability measures $\nu_0,\nu_1$, define
\begin{equation}\label{eq:normalized-action}
\Acal_{\tau_0,\tau_1}(\nu_0,\nu_1;\mu_0,\mu_1)
:=W_2^2(\nu_0,\nu_1)+\tau_0\KL(\nu_0\mid\mu_0)+\tau_1\KL(\nu_1\mid\mu_1),
\end{equation}
and let
\[
\Acal_*:=\inf_{\nu_0,\nu_1\in\mathcal P_2(\R^p)}
\Acal_{\tau_0,\tau_1}(\nu_0,\nu_1;\mu_0,\mu_1).
\]
Write $T=\tau_0+\tau_1$ and $w_i=\tau_i/T$.  For Gaussian references, set
\[
\mu_i=\N(m_i,\Sigma_i),\qquad \Sigma_i\in\mathbb S_{++}^p,
\]
\begin{equation}\label{eq:r-def}
r_i:=\frac{2}{\tau_i},\qquad \delta:=m_0-m_1,
\end{equation}
and
\begin{equation}\label{eq:C-def}
A_i=\Sigma_i^{-1},\qquad C_i=A_i+r_iI,
\qquad \kappa=r_1-r_0.
\end{equation}
Define further
\begin{equation}\label{eq:B-S-def}
B=C_1^{1/2}C_0C_1^{1/2},\qquad
s(t)=\frac{\kappa+\sqrt{\kappa^2+4t}}{2},\qquad
\mathsf S=s(B).
\end{equation}

\begin{proposition}\label{prop:gaussian-ingredients}
The following facts hold.
\begin{enumerate}[label=\textup{(\roman*)},leftmargin=2.2em]
\item \emph{Mass separation.}  If $\alpha=a\mu_0$ and $\beta=b\mu_1$ with $a,b>0$, then
\begin{equation}\label{eq:mass-separation}
\Ucal_{\tau_0,\tau_1}(\alpha,\beta)
=\tau_0a+\tau_1b-TM_*,
\end{equation}
where
\begin{equation}\label{eq:optimal-mass}
M_*=a^{w_0}b^{w_1}\exp\!\left(-\frac{\Acal_*}{T}\right).
\end{equation}

\item \emph{Gaussian reduction.}  If $\nu_0,\nu_1\in\mathcal P_2(\R^p)$ have means $u,v$ and positive-definite covariance matrices $P,Q$, and $g_0=\N(u,P)$, $g_1=\N(v,Q)$, then
\begin{align}
W_2^2(\nu_0,\nu_1)&\ge W_2^2(g_0,g_1),\label{eq:gelbrich-reduction}\\
\KL(\nu_0\mid\mu_0)&\ge \KL(g_0\mid\mu_0),\qquad
\KL(\nu_1\mid\mu_1)\ge \KL(g_1\mid\mu_1).\label{eq:kl-gaussian-projection}
\end{align}
Restricting the two shape measures to nondegenerate Gaussians therefore leaves the infimum defining $\Acal_*$ unchanged, and every finite-objective minimizer is Gaussian.  The Gaussian covariance minimum is attained at an interior pair $(P_*,Q_*)\in\mathbb S_{++}^p\times\mathbb S_{++}^p$.

\item \emph{Mean action.}  The optimal mean contribution is
\begin{equation}\label{eq:mean-action}
\Acal_{\rm mean}
=\delta^\top\bigl(I+r_0\Sigma_0+r_1\Sigma_1\bigr)^{-1}\delta.
\end{equation}
The adjusted means are
\begin{equation}\label{eq:adjusted-means}
u_*=m_0-r_0\Sigma_0h_*,\qquad
v_*=m_1+r_1\Sigma_1h_*,
\end{equation}
where
\[
h_*=(I+r_0\Sigma_0+r_1\Sigma_1)^{-1}\delta.
\]

\item \emph{Covariance stationarity and Riccati reduction.}  If $L_*\succ0$ is the optimal Gaussian transport map, so that $Q_*=L_*P_*L_*$, then
\begin{equation}\label{eq:cov-foc}
P_*^{-1}=A_0+r_0(I-L_*),\qquad
Q_*^{-1}=A_1+r_1(I-L_*^{-1}),
\end{equation}
and $L_*$ is the unique positive-definite solution of
\begin{equation}\label{eq:riccati}
L_*C_1L_*-\kappa L_*=C_0.
\end{equation}
Equivalently, the optimal transport map is
\begin{equation}\label{eq:L-star}
L_*=C_1^{-1/2}\mathsf S C_1^{-1/2}.
\end{equation}
\end{enumerate}
\end{proposition}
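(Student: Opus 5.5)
The proof goes through the four items in order, since each feeds the next.

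For (i), fix probability "shape" measures $\nu_0,\nu_1$ and a mass $M>0$, and write any candidate plan as $\gamma=M\pi$, where $\pi$ is a coupling of $\nu_0,\nu_1$. The generalized KL \eqref{eq:finite-kl} satisfies the scaling identity
\[
\KL(M\nu\mid a\mu)=M\KL(\nu\mid\mu)+M\log(M/a)-M+a.
\]
Substituting it into \eqref{eq:uot-def} and optimizing over $\pi$ for fixed marginals gives the objective
\[
M\,\Acal_{\tau_0,\tau_1}(\nu_0,\nu_1;\mu_0,\mu_1)+\tau_0\bigl(M\log\tfrac Ma-M+a\bigr)+\tau_1\bigl(M\log\tfrac Mb-M+b\bigr).
\]
For fixed shapes this is strictly convex in $M$. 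Its first-order condition is $\Acal+T\log M-\tau_0\log a-\tau_1\log b=0$, so $M=a^{w_0}b^{w_1}e^{-\Acal/T}$, and the optimal value is $\tau_0a+\tau_1b-TM$. Since $M$ is decreasing in $\Acal$, taking the infimum over shapes yields \eqref{eq:mass-separation} and \eqref{eq:optimal-mass}. The degenerate case $\gamma=0$ gives the value $\tau_0a+\tau_1b$, which is never better than the formula.

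For (ii), \eqref{eq:gelbrich-reduction} is Gelbrich's bound. For \eqref{eq:kl-gaussian-projection}, write
\[
\KL(\nu\mid\mu)=-H(\nu)-\int\log\frac{\dd\mu}{\dd x}\,\dd\nu .
\]
The second term depends only on the first two moments because $\log\mu$ is quadratic, and $-H(\nu)\ge -H(g)$ by the Gaussian maximum-entropy property (if $\nu$ has no density, the KL is infinite). Equality holds only when $\nu$ is Gaussian, which gives the claim about finite-objective minimizers. To get interior attainment, I would check that the Gaussian objective in $(u,v,P,Q)$ is coercive. It tends to $+\infty$ as $P\to\partial\mathbb S^p_{++}$ (through the $-\log\det P$ term) and as $\|P\|\to\infty$ (through the $\tr(\Sigma_0^{-1}P)$ term, while $W_2^2\ge0$). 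The same holds for $Q$. Continuity then gives a minimizer in the interior.

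For (iii), the Bures formula separates means from covariances. The mean part is
\[
\|u-v\|^2+\tfrac{\tau_0}{2}(u-m_0)^\top A_0(u-m_0)+\tfrac{\tau_1}{2}(v-m_1)^\top A_1(v-m_1),
\]
a strictly convex quadratic. Setting the gradient to zero, with $h=u-v$, gives $u=m_0-r_0\Sigma_0h$ and $v=m_1+r_1\Sigma_1h$. Consistency then requires $(I+r_0\Sigma_0+r_1\Sigma_1)h=\delta$. Substituting back, the value collapses to $h_*^\top(I+r_0\Sigma_0+r_1\Sigma_1)h_*=\delta^\top(\cdots)^{-1}\delta$.

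For (iv), I would differentiate the covariance objective
\[
\tr P+\tr Q-2\tr(Q^{1/2}PQ^{1/2})^{1/2}+\tfrac{\tau_0}{2}\bigl(\tr A_0P-\log\det P\bigr)+\tfrac{\tau_1}{2}(\cdots).
\]
This uses the known gradients $\partial_P\tr(Q^{1/2}PQ^{1/2})^{1/2}=\tfrac12L$ and $\partial_Q=\tfrac12L^{-1}$, where $L=Q^{1/2}(Q^{1/2}PQ^{1/2})^{-1/2}Q^{1/2}$ is the map taking $P$ to $Q$. The stationarity conditions at the interior minimizer are then exactly \eqref{eq:cov-foc}. Substituting $P^{-1}=C_0-r_0L$ and $Q^{-1}=C_1-r_1L^{-1}$ into the identity $LPL=Q$, written as $LQ^{-1}L=P^{-1}$, gives $LC_1L-r_1L=C_0-r_0L$, which is \eqref{eq:riccati}.

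For uniqueness, the substitution $Y=C_1^{1/2}LC_1^{1/2}$ turns the Riccati equation into $Y^2-\kappa Y=B$. A positive-definite $Y$ commutes with $B$, so $Y=s(B)$, the positive root. This is \eqref{eq:L-star}.

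The main obstacle is the rigorous matrix differentiation of the Bures term together with the interior attainment; the remaining steps are explicit algebra.
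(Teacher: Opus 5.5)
Your proposal is correct and follows essentially the same route as the paper's derivation in Appendix~\ref{app:gaussian-derivations}. The steps line up one for one: mass separation via the KL scaling identity and strict convexity in $M$; the Gelbrich bound plus Gaussian moment projection, where your maximum-entropy phrasing is just the paper's Pythagorean identity; coercivity for interior attainment; the explicit mean quadratic; and the Bures differential leading to the Riccati equation, with uniqueness from $Y=C_1^{1/2}LC_1^{1/2}$, $Y^2-\kappa Y=B$ and the unique positive root $s(b)$.
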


These finite-dimensional identities provide the input for the spectral reparameterization used throughout the random-matrix analysis.

\subsection{Spectral representation}

\begin{theorem}[Exact spectral representation]\label{thm:exact-spectral}
For nondegenerate Gaussian inputs, the optimal action decomposes as
\[
\Acal_*=\Acal_{\rm mean}+\Acal_{\rm cov},
\]
where \eqref{eq:mean-action} holds and the covariance contribution is
\begin{equation}\label{eq:general-cov-spectral}
\begin{aligned}
\Acal_{\rm cov}
={}&\frac{\tau_0}{2}\log\det\Sigma_0
+\frac{\tau_1}{2}\log\det\Sigma_1
+\frac{\tau_1-\tau_0}{2}\log\det C_1 \\
&+\frac{\tau_0-\tau_1}{2}\log\det \mathsf S
+\frac{T}{2}\log\det(\mathsf S-r_1I).
\end{aligned}
\end{equation}
Equivalently, if $\lambda_1(B),\ldots,\lambda_p(B)$ are the eigenvalues of $B$, then
\begin{equation}\label{eq:LSS-general}
\Acal_{\rm cov}
=\frac{\tau_0}{2}\log\det\Sigma_0
+\frac{\tau_1}{2}\log\det\Sigma_1
+\frac{\tau_1-\tau_0}{2}\log\det C_1
+\sum_{j=1}^p\Phi_{\tau_0,\tau_1}(\lambda_j(B)),
\end{equation}
where
\begin{equation}\label{eq:Phi}
\Phi_{\tau_0,\tau_1}(t)
=\frac{\tau_0-\tau_1}{2}\log s(t)
+\frac{T}{2}\log(s(t)-r_1).
\end{equation}
In addition, $\mathsf S-r_1I\succ0$.
\end{theorem}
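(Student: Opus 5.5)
The plan is to evaluate the normalized action $\Acal_{\tau_0,\tau_1}$ at the Gaussian minimizer supplied by Proposition~\ref{prop:gaussian-ingredients}, and then rewrite each term using the Riccati parametrization \eqref{eq:L-star}.

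\textbf{Step 1: mean/covariance split.} By Proposition~\ref{prop:gaussian-ingredients}(ii) we may restrict to $\nu_0=\N(u,P)$ and $\nu_1=\N(v,Q)$. For such measures
\[
W_2^2=\|u-v\|^2+\tr P+\tr Q-2\tr(P^{1/2}QP^{1/2})^{1/2},
\]
and
\[
\KL(\N(u,P)\mid\N(m_i,\Sigma_i))=\tfrac12\bigl[\tr(A_iP)-p-\log\det(A_iP)+(u-m_i)^\top A_i(u-m_i)\bigr].
\]
The mean variables $(u,v)$ and the covariance variables $(P,Q)$ therefore appear in separate summands. The infimum splits as $\Acal_{\rm mean}+\Acal_{\rm cov}$, where $\Acal_{\rm mean}$ is given by \eqref{eq:mean-action} and $\Acal_{\rm cov}$ is the covariance objective evaluated at the interior minimizer $(P_*,Q_*)$ from part (ii).

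\textbf{Step 2: the trace terms cancel.} Write $L=L_*$, so that $Q_*=LP_*L$. Then $(P_*^{1/2}Q_*P_*^{1/2})^{1/2}=P_*^{1/2}LP_*^{1/2}$, and hence the cross term equals $-2\tr(LP_*)$. Using \eqref{eq:cov-foc} and $\tau_i/2=1/r_i$:
\[
\tfrac1{r_0}\tr(A_0P_*)=\tfrac{p}{r_0}-\tr P_*+\tr(LP_*).
\]
Similarly, since $L^{-1}Q_*=P_*L$,
\[
\tfrac1{r_1}\tr(A_1Q_*)=\tfrac{p}{r_1}-\tr Q_*+\tr(LP_*).
\]
All trace terms then cancel. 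The constants $p/r_0+p/r_1$ cancel the $-\tau_0p/2-\tau_1p/2$ terms. What remains is
\[
\Acal_{\rm cov}=\tfrac{\tau_0}{2}\log\det\Sigma_0+\tfrac{\tau_1}{2}\log\det\Sigma_1+\tfrac{\tau_0}{2}\log\det P_*^{-1}+\tfrac{\tau_1}{2}\log\det Q_*^{-1}.
\]

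\textbf{Step 3: determinants via $\mathsf S$.} Since $\mathsf S$ is a function of $B$, the Riccati relation gives $B=\mathsf S^2-\kappa\mathsf S$. Therefore $C_0=C_1^{-1/2}(\mathsf S^2-\kappa\mathsf S)C_1^{-1/2}$. Combining this with \eqref{eq:cov-foc} and \eqref{eq:L-star}, and using $\kappa+r_0=r_1$:
\[
P_*^{-1}=C_0-r_0L=C_1^{-1/2}\mathsf S(\mathsf S-r_1I)C_1^{-1/2},
\]
\[
Q_*^{-1}=C_1-r_1L^{-1}=C_1^{1/2}(I-r_1\mathsf S^{-1})C_1^{1/2}.
\]
Taking log-determinants gives
\[
\log\det P_*^{-1}=\log\det\mathsf S+\log\det(\mathsf S-r_1I)-\log\det C_1,
\]
\[
\log\det Q_*^{-1}=\log\det C_1+\log\det(\mathsf S-r_1I)-\log\det\mathsf S.
\]
Substituting into Step 2 yields \eqref{eq:general-cov-spectral}. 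Diagonalizing $B$ (so that $\mathsf S=s(B)$ has eigenvalues $s(\lambda_j(B))$) gives \eqref{eq:LSS-general} with $\Phi_{\tau_0,\tau_1}$ as in \eqref{eq:Phi}.

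\textbf{Step 4: positivity of $\mathsf S-r_1I$.} I would prove this directly, so that the logarithms are defined independently of the minimizer. Since $C_0\succeq r_0I$, we have $B=C_1^{1/2}C_0C_1^{1/2}\succeq r_0C_1\succ r_0r_1I$. Moreover, $s(t)>r_1$ holds exactly when $t>r_1^2-\kappa r_1=r_0r_1$. Hence every eigenvalue of $\mathsf S$ exceeds $r_1$. Alternatively, this follows from $Q_*^{-1}\succ0$ by congruence.

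\textbf{Main obstacle.} The delicate step is the justification in Step 1 that the joint infimum is attained and is given by the stationary pair of part (iv). I would rely directly on Proposition~\ref{prop:gaussian-ingredients}(ii)--(iv) for interior attainment and uniqueness of $L_*$. The remaining work is the algebra in Steps 2--3, where the identity $L^{-1}Q_*=P_*L$ drives the trace cancellation.
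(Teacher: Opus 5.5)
Your proposal is correct and follows the paper's proof essentially step for step. The steps match: the stationarity equations cancel the trace terms, the Riccati parametrization $L_*=C_1^{-1/2}\mathsf S C_1^{-1/2}$ evaluates $\det P_*^{-1}$ and $\det Q_*^{-1}$, and $B\succ r_0C_1\succ r_0r_1I$ with $s(r_0r_1)=r_1$ gives positivity. The only cosmetic difference is that you write $P_*^{-1}$ and $Q_*^{-1}$ as explicit congruences of $\mathsf S(\mathsf S-r_1I)$ and $I-r_1\mathsf S^{-1}$ by $C_1^{\mp1/2}$, whereas the paper factors them through $L_*$ and uses the similarity $C_1L_*\sim\mathsf S$.
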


\begin{proof}
Let $P_*,Q_*$ be the adjusted covariance matrices.  Since the optimal coupling between $\N(0,P_*)$ and $\N(0,Q_*)$ is induced by $L_*$, one has $Q_*=L_*P_*L_*$.  By part~(iv) of Proposition~\ref{prop:gaussian-ingredients},
\[
P_*^{-1}=A_0+r_0(I-L_*),\qquad
Q_*^{-1}=A_1+r_1(I-L_*^{-1}).
\]
The covariance contribution to the Wasserstein term is
\[
\tr P_*+\tr Q_*-2\tr(L_*P_*).
\]
Multiplying the first identity in \eqref{eq:cov-foc} by $P_*$ and the second by $Q_*$, and using $Q_*=L_*P_*L_*$, gives
\[
\frac{\tau_0}{2}\{\tr(A_0P_*)-p\}
=-\tr(P_*-L_*P_*),
\]
and
\[
\frac{\tau_1}{2}\{\tr(A_1Q_*)-p\}
=-\tr(Q_*-L_*P_*).
\]
The two trace contributions cancel the Bures trace term, leaving
\begin{equation}\label{eq:log-only}
\Acal_{\rm cov}
=\frac{\tau_0}{2}\log\frac{\det\Sigma_0}{\det P_*}
+\frac{\tau_1}{2}\log\frac{\det\Sigma_1}{\det Q_*}.
\end{equation}

By \eqref{eq:riccati},
\[
P_*^{-1}=C_0-r_0L_*=L_*(C_1L_*-r_1I),
\]
where we used $\kappa+r_0=r_1$.  Since $C_1L_*$ is similar to $\mathsf S$ and
\[
\det L_*=\frac{\det \mathsf S}{\det C_1},
\]
we obtain
\begin{equation}\label{eq:det-P}
\det P_*^{-1}=\frac{\det \mathsf S\,\det(\mathsf S-r_1I)}{\det C_1}.
\end{equation}
Similarly, the second stationarity identity gives
\[
Q_*^{-1}=C_1-r_1L_*^{-1}=(C_1L_*-r_1I)L_*^{-1},
\]
so
\begin{equation}\label{eq:det-Q}
\det Q_*^{-1}=\frac{\det C_1\,\det(\mathsf S-r_1I)}{\det \mathsf S}.
\end{equation}
Substituting \eqref{eq:det-P}--\eqref{eq:det-Q} into \eqref{eq:log-only} yields \eqref{eq:general-cov-spectral}, and functional calculus yields \eqref{eq:LSS-general}.

To verify that the logarithms are well defined, it remains to show $\mathsf S-r_1I\succ0$.  Since $C_0\succ r_0I$ and $C_1\succ r_1I$,
\[
B=C_1^{1/2}C_0C_1^{1/2}\succ r_0C_1\succ r_0r_1I.
\]
The scalar map $s$ is increasing and $s(r_0r_1)=r_1$, because $r_1(r_1-\kappa)=r_0r_1$.  Since $B\succ r_0r_1I$, functional calculus gives $\mathsf S=s(B)\succ r_1I$.
\end{proof}

\subsection{Equal-penalty ridge products}

Although the preceding formula is spectral, its asymmetric form obscures the matrix structure that survives after the substitution $\Sigma_i\mapsto S_i$.  With equal penalties, the determinant terms combine into a nonlinear product of ridge transforms, which is the form needed for the high-dimensional analysis.  Most of what follows uses equal marginal penalties:
\begin{equation}\label{eq:equal-tau}
\tau_0=\tau_1=\tau,\qquad r=2/\tau.
\end{equation}
Define the ridge contractions
\begin{equation}\label{eq:R-def}
R_i=r\Sigma_i(I+r\Sigma_i)^{-1}=r(\Sigma_i^{-1}+rI)^{-1},
\end{equation}
and
\begin{equation}\label{eq:Rprod}
\mathcal R=R_1^{1/2}R_0R_1^{1/2}.
\end{equation}
Then $0\prec R_i\prec I$ and $0\prec\mathcal R\prec I$.  Each eigenvalue of $R_i$ has the form $r\lambda/(1+r\lambda)\in(0,1)$, and
\[
\|\mathcal R\|_{\rm op}
\le \|R_1\|_{\rm op}\|R_0\|_{\rm op}<1.
\]
All logarithms and principal square roots in the ridge-product representation are therefore evaluated away from their branch singularities.

\begin{lemma}[Ridge-product symmetry]\label{lem:ridge-product-symmetry}
Let $A,B\in\mathbb S_+^p$.  Then
\[
A^{1/2}BA^{1/2}
\qquad\text{and}\qquad
B^{1/2}AB^{1/2}
\]
have the same characteristic polynomial and hence the same spectrum, with algebraic multiplicities.  If $A,B\in\mathbb S_{++}^p$, the two matrices are similar.  Consequently, exchanging $R_0$ and $R_1$ in the ridge product leaves every spectral statistic of the product unchanged; in particular,
\begin{equation}\label{eq:ridge-product-exchange}
\det\!\left(I-\{R_1^{1/2}R_0R_1^{1/2}\}^{1/2}\right)
=
\det\!\left(I-\{R_0^{1/2}R_1R_0^{1/2}\}^{1/2}\right).
\end{equation}
\end{lemma}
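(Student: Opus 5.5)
The plan is to reduce everything to the standard fact that $XY$ and $YX$ have the same characteristic polynomial for square matrices $X,Y$. Set $X=A^{1/2}B^{1/2}$ and $Y=B^{1/2}A^{1/2}$, using the unique positive-semidefinite square roots. Then
\[
XY=A^{1/2}BA^{1/2},\qquad YX=B^{1/2}AB^{1/2}.
\]
The identity $\det(\lambda I-XY)=\det(\lambda I-YX)$ holds for all square $X,Y$. I would prove it first for invertible $X$, where $XY=X(YX)X^{-1}$, and then extend to general $X$ by density, since both sides are polynomial in the entries of $X$. This gives equality of characteristic polynomials, and hence equality of spectra with algebraic multiplicities. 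Both matrices are symmetric positive semidefinite, so algebraic and geometric multiplicities coincide.

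For the nonsingular case $A,B\in\mathbb S_{++}^p$, the matrix $X=A^{1/2}B^{1/2}$ is invertible, and the conjugation identity $XY=X(YX)X^{-1}$ exhibits the similarity explicitly. An alternative route avoids this: two real symmetric matrices with the same spectrum are orthogonally similar by the spectral theorem. That already yields similarity even in the semidefinite case, which is stronger than stated but harmless.

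For the consequence, I would apply the lemma with $A=R_1$ and $B=R_0$. Any spectral statistic $\sum_j f(\lambda_j)$, or $\det f(\cdot)=\prod_j f(\lambda_j)$, of a symmetric matrix depends only on its eigenvalues with multiplicity, so it is unchanged. For \eqref{eq:ridge-product-exchange}, the eigenvalues of both products lie in $[0,1)$, as shown just before the lemma via $\|\mathcal R\|_{\rm op}<1$; the same bound holds in the swapped order. Therefore $\{\cdot\}^{1/2}$ and $\det(I-\{\cdot\}^{1/2})=\prod_j(1-\lambda_j^{1/2})$ are well defined by functional calculus and coincide.

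There is no real obstacle. The only point needing a little care is the density/continuity step for singular $X$. The spectral-theorem argument for symmetric matrices sidesteps it entirely once equality of the characteristic polynomials is known.
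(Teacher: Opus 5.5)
Your proof is correct and follows the paper's route: the same factorization $X=A^{1/2}B^{1/2}$, $Y=B^{1/2}A^{1/2}$, equality of the characteristic polynomials of $XY$ and $YX$, and similarity by conjugating with the invertible $X$ when $A,B\succ0$. For the characteristic polynomials, the paper uses Sylvester's determinant identity for $\lambda\ne0$ plus polynomial continuation, whereas you use the invertible case plus density; your extra remark that symmetric matrices with equal spectra are orthogonally similar is also correct, but it relies on the equal spectra supplied by the density (or Sylvester) step rather than sidestepping that step.
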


\begin{proof}
Set $X=A^{1/2}B^{1/2}$ and $Y=B^{1/2}A^{1/2}$.  Then $XY=A^{1/2}BA^{1/2}$ and $YX=B^{1/2}AB^{1/2}$.  Sylvester's determinant identity gives
\[
\det(\lambda I-XY)=\det(\lambda I-YX)
\]
for $\lambda\ne0$, and hence for every $\lambda$ by polynomial identity.  Thus the characteristic polynomials coincide.  If $A$ and $B$ are positive definite, $X$ is invertible and
\[
YX=X^{-1}(XY)X,
\]
so the two matrices are similar.  Applying the spectral conclusion to $A=R_1$ and $B=R_0$, and then the continuous functional calculus for the principal square root, yields \eqref{eq:ridge-product-exchange}.
\end{proof}

\begin{corollary}[Ridge-product identity]\label{cor:ridge-product}
Under \eqref{eq:equal-tau}, the covariance action admits the ridge-product representation
\begin{equation}\label{eq:ridge-formula}
\Acal_{\rm cov,\tau}(\Sigma_0,\Sigma_1)
=-\frac{\tau}{2}\log\det(I-R_0)
-\frac{\tau}{2}\log\det(I-R_1)
+\tau\log\det(I-\mathcal R^{1/2}).
\end{equation}
In particular, the associated determinants satisfy
\begin{equation}\label{eq:det-ineq}
\det(I-\mathcal R^{1/2})^2
\ge \det(I-R_0)\det(I-R_1),
\end{equation}
with equality if and only if $\Sigma_0=\Sigma_1$.
\end{corollary}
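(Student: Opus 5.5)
The plan is to specialize Theorem~\ref{thm:exact-spectral} to $\tau_0=\tau_1=\tau$ and then rewrite every determinant in terms of the ridge contractions $R_i$. With equal penalties, $r_0=r_1=r$ gives $\kappa=0$, so $s(t)=\sqrt t$ and $\mathsf S=B^{1/2}$. The terms $\frac{\tau_1-\tau_0}{2}\log\det C_1$ and $\frac{\tau_0-\tau_1}{2}\log\det\mathsf S$ in \eqref{eq:general-cov-spectral} vanish, leaving
\[
\Acal_{\rm cov}=\tfrac{\tau}{2}\log\det\Sigma_0+\tfrac{\tau}{2}\log\det\Sigma_1+\tau\log\det(B^{1/2}-rI).
\]

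The key observation is that $C_i=\Sigma_i^{-1}+rI=rR_i^{-1}$ by \eqref{eq:R-def}. Hence
\[
B=C_1^{1/2}C_0C_1^{1/2}=r^2R_1^{-1/2}R_0^{-1}R_1^{-1/2}=r^2\mathcal R^{-1},
\]
since $\mathcal R^{-1}=R_1^{-1/2}R_0^{-1}R_1^{-1/2}$. Therefore $\mathsf S=r\mathcal R^{-1/2}$ and
\[
\det(\mathsf S-rI)=r^p\det(I-\mathcal R^{1/2})/\det\mathcal R^{1/2}.
\]
Next I will use $\det\mathcal R=\det R_0\det R_1$. Since $I-R_i=(I+r\Sigma_i)^{-1}$, we also have $\det R_i=r^p\det\Sigma_i\det(I-R_i)$. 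Substituting these, the $\tau p\log r$ terms cancel and the $\log\det\Sigma_i$ terms cancel against the first two summands. What remains is exactly \eqref{eq:ridge-formula}. All logarithms are legitimate because $0\prec\mathcal R\prec I$.

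For \eqref{eq:det-ineq}, I will argue variationally rather than by matrix inequalities. $\Acal_{\rm cov}$ is the optimal action \eqref{eq:normalized-action} for the centered references $\N(0,\Sigma_0)$ and $\N(0,\Sigma_1)$, since the mean term \eqref{eq:mean-action} vanishes when $\delta=0$. Because $W_2^2\ge0$ and $\KL\ge0$, we get $\Acal_{\rm cov}\ge0$. Dividing \eqref{eq:ridge-formula} by $\tau/2$ and exponentiating then gives the inequality.

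For the equality case, suppose first that $\Sigma_0=\Sigma_1$. Taking $\nu_0=\nu_1=\mu_0=\mu_1$ gives action $0$, so equality holds. Conversely, suppose $\Acal_{\rm cov}=0$. Proposition~\ref{prop:gaussian-ingredients}(ii) says the minimum is attained at a Gaussian pair. At that pair both KL terms must vanish, which forces $\nu_i=\mu_i$, and $W_2(\mu_0,\mu_1)=0$, which forces $\Sigma_0=\Sigma_1$. The only delicate point is this attainment, which is needed to pass from a zero infimum to actual zero terms; it is supplied by part~(ii) of Proposition~\ref{prop:gaussian-ingredients}. The determinant bookkeeping is the other place needing care, but it is routine.
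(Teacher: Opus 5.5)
Your proposal is correct and follows the paper's proof essentially step for step. You specialize Theorem~\ref{thm:exact-spectral} at $\kappa=0$ and identify $r\mathsf S^{-1}=\mathcal R^{1/2}$: you reach this via $C_i=rR_i^{-1}$, while the paper factors $\mathsf S-rI=\mathsf S(I-r\mathsf S^{-1})$ and uses $\det\mathsf S=(\det C_0\det C_1)^{1/2}$. You then obtain the inequality from $\Acal_{\rm cov,\tau}\ge0$ and the equality case from the vanishing of the KL and Wasserstein terms at the minimizer, just as the paper does; your explicit appeal to attainment in Proposition~\ref{prop:gaussian-ingredients}(ii) is what the paper's phrase ``adjusted marginals'' uses implicitly.
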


\begin{proof}
When $\kappa=0$, $\mathsf S=(C_1^{1/2}C_0C_1^{1/2})^{1/2}$ and \eqref{eq:general-cov-spectral} becomes
\[
\Acal_{\rm cov,\tau}
=\frac{\tau}{2}(\log\det\Sigma_0+\log\det\Sigma_1)
+\tau\log\det(\mathsf S-rI).
\]
Factor $\mathsf S-rI=\mathsf S(I-r\mathsf S^{-1})$.  Because $\det\mathsf S=(\det C_0\det C_1)^{1/2}$,
\[
\frac{\tau}{2}\log\det(\Sigma_0C_0)
+\frac{\tau}{2}\log\det(\Sigma_1C_1)
=\frac{\tau}{2}\sum_{i=0}^1\log\det(I+r\Sigma_i)
=-\frac{\tau}{2}\sum_{i=0}^1\log\det(I-R_i).
\]
Also,
\[
(r\mathsf S^{-1})^2
=r^2C_1^{-1/2}C_0^{-1}C_1^{-1/2}
=R_1^{1/2}R_0R_1^{1/2}=\mathcal R,
\]
so $r\mathsf S^{-1}=\mathcal R^{1/2}$, which gives \eqref{eq:ridge-formula}.  Since $\Acal_{\rm cov,\tau}\ge0$, exponentiation yields \eqref{eq:det-ineq}.  Equality of the covariance action forces the adjusted marginals to coincide with their references and the Wasserstein term to vanish; hence $\Sigma_0=\Sigma_1$.  The converse is immediate.
\end{proof}

\begin{remark}\label{rem:ridge-symmetry}
The asymmetric appearance of $C_0,C_1$, and $\mathsf S$ in Theorem~\ref{thm:exact-spectral} is only a feature of the parametrization.  The endpoint objective itself is invariant under exchanging $(\tau_0,\Sigma_0)$ with $(\tau_1,\Sigma_1)$.  In the equal-penalty representation this symmetry is encoded by Lemma~\ref{lem:ridge-product-symmetry}: the two exchanged ridge products have the same full spectrum (and, here, are similar because $R_0,R_1\succ0$).  For $p=1$, direct scalar minimization of the two adjusted variances reproduces the specialization of Theorem~\ref{thm:exact-spectral}; this provides a useful sanity check on the logarithmic signs and penalty normalization.
\end{remark}

\section{One-sample linear spectral statistics}\label{sec:one-sample}

Let $X_p=(x_{kj})$ be an $n\times p$ matrix with i.i.d. standardized entries and let $\Sigma_p\succ0$ be deterministic.  We consider
\begin{equation}\label{eq:general-sample-cov}
S_p=\Sigma_p^{1/2}\frac{X_p^\top X_p}{n}\Sigma_p^{1/2},
\qquad \frac pn\to c\in(0,\infty).
\end{equation}
Throughout this section, $\tau>0$ is fixed.  Standard normal entries recover the Gaussian sampling model; the first-order spectral limit below requires only the stated sample-covariance assumptions.  Write $H_p=F^{\Sigma_p}$ for the empirical population spectral distribution.

\begin{remark}[Spectral versus Gaussian regimes]\label{rem:c-less-one}
The first-order spectral theory below is formulated for all $c\in(0,\infty)$.  The nondegenerate Gaussian interpretation is narrower: when $c<1$, the Bai--Yin lower edge is positive and $S_p\succ0$ eventually almost surely, so the spectral statistic agrees with the finite-dimensional Gaussian KL-UOT action.  At $c=1$ the limiting support touches the origin, and for $c>1$ the sample covariance has a null space of asymptotic proportion $1-1/c$.  In those regimes we retain the positive-semidefinite spectral extension but do not identify it with KL divergence between degenerate Gaussian measures.  This separates the random-matrix statement from the additional support-subspace analysis required for singular Gaussian KL-UOT.
\end{remark}

\subsection{Scalar spectral function}

\begin{proposition}[Scalar spectral function]\label{prop:f-tau}
For $x\ge0$, define
\begin{equation}\label{eq:f-tau}
f_\tau(x)
=\tau\log\left(\sqrt{(1+r)(1+rx)}-r\sqrt{x}\right),
\qquad r=2/\tau.
\end{equation}
At the endpoint, the continuous extension is finite and satisfies
\begin{equation}\label{eq:f-zero}
f_\tau(0)=\frac{\tau}{2}\log(1+r).
\end{equation}
For every $\Sigma\succ0$, the covariance action is the linear spectral statistic
\begin{equation}\label{eq:identity-action-lfs}
\Acal_{\rm cov,\tau}(I,\Sigma)
=\sum_{j=1}^p f_\tau(\lambda_j(\Sigma)).
\end{equation}
It also satisfies
\begin{equation}\label{eq:f-properties}
f_\tau(x)\ge0,\quad f_\tau(x)=0\iff x=1,
\quad f_\tau(1)=f_\tau'(1)=0,
\quad f_\tau''(1)=\frac{1}{2(1+2/\tau)}.
\end{equation}
A Taylor expansion at $x=1$ gives
\begin{equation}\label{eq:f-local}
f_\tau(1+h)=\frac{h^2}{4(1+2/\tau)}+O(h^3).
\end{equation}
In the large-penalty limit, $f_\tau$ converges pointwise:
\begin{equation}\label{eq:f-balanced-limit}
f_\tau(x)\to(\sqrt{x}-1)^2.
\end{equation}
\end{proposition}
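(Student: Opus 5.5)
The plan is to specialize Corollary~\ref{cor:ridge-product} to $\Sigma_0=I$, $\Sigma_1=\Sigma$. Then $R_0=\frac{r}{1+r}I$ is scalar, so $\mathcal R=\frac{r}{1+r}R_1$ commutes with $R_1$. If $x=\lambda_j(\Sigma)$, the corresponding eigenvalue of $\mathcal R^{1/2}$ is
\[
\frac{r\sqrt{x}}{\sqrt{(1+r)(1+rx)}} .
\]
The three determinant terms in \eqref{eq:ridge-formula} diagonalize simultaneously in the eigenbasis of $\Sigma$. The per-eigenvalue contribution is
\[
\frac{\tau}{2}\log(1+r)+\frac{\tau}{2}\log(1+rx)
+\tau\log\!\Bigl(1-\tfrac{r\sqrt x}{\sqrt{(1+r)(1+rx)}}\Bigr).
\]
Writing the last logarithm as $\tau\log\bigl(\sqrt{(1+r)(1+rx)}-r\sqrt x\bigr)-\frac{\tau}{2}\log\bigl((1+r)(1+rx)\bigr)$, the first two terms cancel. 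This leaves exactly $f_\tau(x)$, which proves \eqref{eq:identity-action-lfs}. The formula for $f_\tau$ is continuous on $[0,\infty)$, and setting $x=0$ gives $\tau\log\sqrt{1+r}$, which is \eqref{eq:f-zero}.

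For the sign properties, set $g(x)=\sqrt{(1+r)(1+rx)}-r\sqrt x$, so $f_\tau=\tau\log g$. I will show $g(x)\ge1$ with equality only at $x=1$. Both $\sqrt{(1+r)(1+rx)}$ and $1+r\sqrt x$ are positive, so it suffices to compare their squares. Expanding, $(1+r)(1+rx)-(1+r\sqrt x)^2=r(1+x-2\sqrt x)=r(\sqrt x-1)^2\ge0$, with equality iff $x=1$. This gives $f_\tau\ge0$, $f_\tau(x)=0\iff x=1$, and $f_\tau(1)=0$. Since $x=1$ is an interior minimum of a smooth function, $f_\tau'(1)=0$.

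The second derivative is the only genuinely computational step. I would substitute $x=1+h$, expand $\sqrt{(1+r)(1+r+rh)}=(1+r)\sqrt{1+\tfrac{rh}{1+r}}$ and $\sqrt{1+h}$ to second order, and collect terms. This should give
\[
g(1+h)=1+\frac{rh^2}{8(1+r)}+O(h^3),
\]
where the linear terms $\tfrac{rh}{2}-\tfrac{rh}{2}$ cancel. Then $\log g=\frac{rh^2}{8(1+r)}+O(h^3)$, and multiplying by $\tau=2/r$ yields $\frac{h^2}{4(1+2/\tau)}$. This is \eqref{eq:f-local}, and it gives $f_\tau''(1)=\frac{1}{2(1+2/\tau)}$. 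The main risk is bookkeeping of the $h^2$ coefficients, so I would cross-check the result against the positivity identity above.

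For the large-penalty limit \eqref{eq:f-balanced-limit}, I fix $x$ and let $r=2/\tau\to0$. Then $g=1+\tfrac r2(1+x)-r\sqrt x+O(r^2)$, hence $\log g=\tfrac r2(\sqrt x-1)^2+O(r^2)$. Multiplying by $\tau=2/r$ gives $f_\tau(x)\to(\sqrt x-1)^2$ pointwise.
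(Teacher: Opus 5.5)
Your proposal is correct and follows essentially the paper's route: you specialize the ridge-product identity to $\Sigma_0=I$, so that $R_0$ is scalar and everything diagonalizes in the eigenbasis of $\Sigma$, you use $(1+r)(1+rx)-(1+r\sqrt x)^2=r(\sqrt x-1)^2$ for positivity, and you expand in $h$ and in $r$ for the local and balanced limits. The only cosmetic difference is that you read off $f_\tau''(1)$ from the second-order expansion of $g(1+h)$, where the paper differentiates directly; the two are equivalent because $f_\tau$ is smooth near $x=1$.
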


\begin{proof}
When $\Sigma_0=I$, $R_0=r(1+r)^{-1}I$ and an eigenvalue $x>0$ of $\Sigma$ produces the ridge eigenvalue $rx/(1+rx)$.  Substitution into \eqref{eq:ridge-formula} gives \eqref{eq:f-tau} and \eqref{eq:identity-action-lfs}.  The same scalar formula has the finite continuous limit \eqref{eq:f-zero} as $x\downarrow0$, which defines $f_\tau$ on $[0,\infty)$.

To prove nonnegativity, set $t=\sqrt x$.  Since
\[
(1+r)(1+rt^2)-(1+rt)^2=r(t-1)^2\ge0,
\]
we have
\[
\sqrt{(1+r)(1+rx)}-r\sqrt x\ge1,
\]
with equality only at $x=1$.  This proves the nonnegativity and equality statement in \eqref{eq:f-properties}.  Direct differentiation yields $f_\tau'(1)=0$ and
\[
f_\tau''(1)=\frac{r\tau}{4(1+r)}=\frac{1}{2(1+r)}=\frac{1}{2(1+2/\tau)},
\]
which proves \eqref{eq:f-local}.  Expanding \eqref{eq:f-tau} at $r=0$ gives \eqref{eq:f-balanced-limit}.
\end{proof}

For a positive-semidefinite matrix $S\in\mathbb S_+^p$, define the spectral extension
\begin{equation}\label{eq:psd-spectral-extension}
\mathfrak A_\tau(S):=\sum_{j=1}^p f_\tau(\lambda_j(S)).
\end{equation}
When $S\succ0$, Proposition~\ref{prop:f-tau} gives
$\mathfrak A_\tau(S)=\Acal_{\rm cov,\tau}(I,S)$.  Definition~\eqref{eq:psd-spectral-extension} is used below whenever a discrete sampling law can produce a singular sample covariance at finite $p$; it is only a spectral extension, not a definition of KL divergence between degenerate Gaussian measures.  Under the assumptions of Theorem~\ref{thm:general-pop-limit}, the Bai--Yin lower-edge limit implies that $S_p\succ0$ eventually almost surely, so the extended statistic eventually coincides pathwise with the nondegenerate Gaussian KL-UOT covariance action.

\subsection{General-population spectral limit}

For each $p$, let $F_{c_p,H_p}$ denote the standard deterministic deformed Marchenko--Pastur law associated with $c_p=p/n$ and the empirical population law $H_p=F^{\Sigma_p}$.  Its companion Stieltjes transform $\underline m_{c_p,H_p}$ is characterized by the Silverstein equation below with $(c,H)$ replaced by $(c_p,H_p)$.  Its limit is denoted by $F_{c,H}$; the companion Stieltjes transform $\underline m_{c,H}$ is characterized on $\mathbb C^+$ by
\begin{equation}\label{eq:silverstein-equation}
z=-\frac{1}{\underline m_{c,H}(z)}
+c\int\frac{t}{1+t\underline m_{c,H}(z)}\,\dd H(t),
\qquad \Im\underline m_{c,H}(z)>0.
\end{equation}
The Stieltjes transform of $F_{c,H}$ is related to the companion transform by
\begin{equation}\label{eq:companion-relation-general}
\underline m_{c,H}(z)=-\frac{1-c}{z}+c\,m_{c,H}(z).
\end{equation}

\begin{theorem}[Deformed Marchenko--Pastur limit]\label{thm:general-pop-limit}
Fix $\tau>0$.
Assume that $c_p=p/n\to c\in(0,\infty)$, that $H_p\Rightarrow H$ with
\[
\operatorname{supp}(H_p)\subset[\underline\lambda,\overline\lambda]
\subset(0,\infty)
\]
uniformly in $p$, and that, for all $p$, the entries of $X_p$ are independent copies of a fixed real random variable $x$ satisfying $\mathbb E x=0$, $\mathbb E x^2=1$, and $\mathbb E|x|^4<\infty$.  Define
\begin{equation}\label{eq:theta-finite-p}
\Theta_{\tau,p}:=\int f_\tau(x)\,\dd F_{c_p,H_p}(x),
\qquad
\Theta_\tau(c,H):=\int f_\tau(x)\,\dd F_{c,H}(x).
\end{equation}
Then the empirical statistic and its deterministic counterpart satisfy, almost surely,
\begin{equation}\label{eq:general-pop-de}
\frac1p\mathfrak A_\tau(S_p)-\Theta_{\tau,p}\longrightarrow0,
\end{equation}
and
\begin{equation}\label{eq:general-pop-limit}
\Theta_{\tau,p}\longrightarrow\Theta_\tau(c,H),
\qquad
\frac1p\mathfrak A_\tau(S_p)\longrightarrow\Theta_\tau(c,H).
\end{equation}
\end{theorem}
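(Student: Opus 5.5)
The approach is to combine the almost-sure weak convergence of the empirical spectral distribution $F^{S_p}$ with uniform control of the extreme eigenvalues. Together these make the continuous function $f_\tau$ effectively bounded on a fixed compact interval containing all spectra. We work throughout with the normalized statistic
\[
\frac1p\mathfrak A_\tau(S_p)=\int f_\tau\,\dd F^{S_p}.
\]
By Proposition~\ref{prop:f-tau}, $f_\tau$ is continuous on $[0,\infty)$, including the endpoint value \eqref{eq:f-zero}. Hence no singularity at the origin needs to be handled, and this holds for every $c\in(0,\infty)$, including $c\ge1$, where $F^{S_p}$ and $F_{c,H}$ carry an atom at $0$.

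\emph{Step 1: first-order limits.} Under the stated i.i.d. assumptions and $H_p\Rightarrow H$, the Silverstein--Bai theorem gives $F^{S_p}\Rightarrow F_{c,H}$ almost surely. Only second moments are needed for this step. Separately, the deterministic laws satisfy $F_{c_p,H_p}\Rightarrow F_{c,H}$. To prove the latter, I would take the companion transforms solving \eqref{eq:silverstein-equation} with parameters $(c_p,H_p)$ and use uniqueness of the solution in $\mathbb C^+$. For $z$ in $\mathbb C^+$, the integrand $t/(1+t\underline m)$ is bounded and continuous on $[\underline\lambda,\overline\lambda]$ uniformly in $p$. A normal-families argument then shows that every subsequential limit of $\underline m_{c_p,H_p}$ solves the limiting equation, so $\underline m_{c_p,H_p}\to\underline m_{c,H}$ pointwise. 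Relation \eqref{eq:companion-relation-general} converts this to $m_{c_p,H_p}\to m_{c,H}$, which gives weak convergence.

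\emph{Step 2: compact support.} I would fix $K=\overline\lambda(1+\sqrt{\sup_p c_p})^2+1$. Since $\normop{S_p}\le\overline\lambda\,\normop{X_p^\top X_p/n}$ and $\mathbb E|x|^4<\infty$, the Bai--Yin theorem gives $\limsup_p\normop{S_p}\le\overline\lambda(1+\sqrt c)^2<K$ almost surely. The deterministic laws $F_{c_p,H_p}$ are supported in $[0,\overline\lambda(1+\sqrt{c_p})^2]\subset[0,K]$; this follows from the standard support bounds for deformed Marchenko--Pastur laws, or from the limit of expected spectra. All measures involved are therefore eventually supported in $[0,K]$. On that interval, $f_\tau$ agrees with a bounded continuous function $\tilde f$ on $[0,\infty)$, for instance $\tilde f(x)=f_\tau(\min(x,K))$.

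\emph{Step 3: conclusion.} Weak convergence applied to $\tilde f$ yields
\[
\int f_\tau\,\dd F^{S_p}\to\Theta_\tau(c,H)\quad\text{a.s.},
\qquad
\Theta_{\tau,p}\to\Theta_\tau(c,H),
\]
which is \eqref{eq:general-pop-limit}. Subtracting the two limits gives \eqref{eq:general-pop-de}.

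The main obstacle is Step 2: the uniform almost-sure bound on $\normop{S_p}$ when the entries have only four moments and no Gaussian structure is available. This is exactly where $\mathbb E|x|^4<\infty$ enters through Bai--Yin. If that reference were unavailable, I would instead use the growth bound $f_\tau(x)=O(x)$ as $x\to\infty$, which follows from \eqref{eq:f-tau}. Uniform integrability of $x\mapsto x$ under $F^{S_p}$ comes from $\int x\,\dd F^{S_p}=\tr S_p/p$. This trace converges almost surely to $\int t\,\dd H$ by the strong law of large numbers, so convergence of first moments plus weak convergence suffices. This fallback needs only second moments and avoids the edge result entirely.
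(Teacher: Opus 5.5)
Your proposal follows the paper's proof essentially step for step: almost-sure Marchenko--Pastur convergence together with continuity of $(c,H)\mapsto F_{c,H}$, a Bai--Yin upper-edge envelope for $S_p$, the matching envelope $\overline\lambda(1+\sqrt{c_p})^2$ for $F_{c_p,H_p}$, and then continuity of $f_\tau$ (including at $0$) on the common compact interval. The paper obtains that deterministic envelope from $F_{c_p,H_p}=H_p\boxtimes\mu_{\mathrm{MP},c_p}$ and $B^{1/2}AB^{1/2}\preceq\overline\lambda B$. Your normal-families sketch and first-moment fallback are reasonable extras, but the classical strong law does not cover almost-sure convergence of $\tr S_p/p$ for a triangular array. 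That step is best done by Chebyshev plus Borel--Cantelli using $\mathbb E|x|^4<\infty$ (the variance is $O(1/(np))$), so your remark that the fallback needs only second moments is not justified as stated.
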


\begin{remark}\label{rem:finite-p-reference-scope}
Equation~\eqref{eq:general-pop-de} is a first-order comparison: both the empirical law $F^{S_p}$ and the deterministic deformed Marchenko--Pastur law $F_{c_p,H_p}$ converge to $F_{c,H}$.  No finite-size rate such as $O(p^{-1})$ or resolvent-level deterministic equivalent is asserted, nor do we claim that $\Theta_{\tau,p}$ is uniformly more accurate than $\Theta_\tau(c,H)$ at finite $p$.  Quantitative bounds of that type would require additional control of the resolvent bias.
\end{remark}

\begin{proof}
The general Marchenko--Pastur theorem gives $F^{S_p}\Rightarrow F_{c,H}$ almost surely, while continuity of the deformed Marchenko--Pastur map under $c_p\to c$ and $H_p\Rightarrow H$ gives $F_{c_p,H_p}\Rightarrow F_{c,H}$~\cite{SilversteinChoi1995,BaiSilverstein2010}.  For the random matrices, the Bai--Yin upper-edge theorem~\cite{BaiYin1993}, whose conclusion requires only the finite fourth moment used here, gives
\[
\lambda_{\max}(X_p^\top X_p/n)\to(1+\sqrt c)^2
\qquad\text{a.s.}
\]
Hence, after choosing any $c^*>c$ with $c_p\le c^*$ eventually,
\[
\operatorname{spec}(S_p)
\subset
\bigl[0,\overline\lambda(1+\sqrt{c^*})^2+o(1)\bigr]
\]
almost surely.  If $c<1$, the Bai--Yin lower edge additionally yields eventual positive definiteness, but no lower spectral gap is needed for the spectral limit because $f_\tau$ is continuous at zero.  The deterministic deformed Marchenko--Pastur law has the same uniform upper envelope.  Indeed, using $F_{c_p,H_p}=H_p\boxtimes\mu_{\mathrm{MP},c_p}$~\cite{NicaSpeicher2006,BaiSilverstein2010}, let free positive operators $A,B$ satisfy
\[
\operatorname{law}(A)=H_p,\qquad
\operatorname{law}(B)=\mu_{\mathrm{MP},c_p}.
\]
Then $0\preceq B^{1/2}AB^{1/2}\preceq\overline\lambda B$, so
\begin{equation}\label{eq:deterministic-mp-envelope}
\operatorname{supp}F_{c_p,H_p}
\subset
\bigl[0,\overline\lambda(1+\sqrt{c^*})^2\bigr]
\end{equation}
for all sufficiently large $p$.  Since $f_\tau$ is continuous on this common compact envelope,
\[
\int f_\tau\,\dd F^{S_p}\to\int f_\tau\,\dd F_{c,H},
\qquad
\Theta_{\tau,p}\to\int f_\tau\,\dd F_{c,H}.
\]
Definition~\eqref{eq:psd-spectral-extension} then proves both \eqref{eq:general-pop-de} and \eqref{eq:general-pop-limit}.  When $c<1$, the Bai--Yin lower-edge limit also gives $S_p\succ0$ for all sufficiently large $p$ almost surely, so the same limits then apply eventually to the genuine nondegenerate-Gaussian covariance action.  For $c\ge1$, the statement remains a positive-semidefinite spectral limit.
\end{proof}

\begin{remark}\label{rem:first-order-universality}
The underlying transport functional is Gaussian, but the first-order spectral statistic need not be sampled from a Gaussian law: \eqref{eq:general-pop-de} holds throughout the stated finite-moment class.  This robustness concerns the global first-order limit, not bulk or edge universality.  At second order, non-Gaussian samples introduce the usual cumulant and eigenvector corrections~\cite{NajimYao2016}, so the fluctuation result below is stated for real Gaussian data.
\end{remark}

\subsection{Identity-population bias}

Let $\mu_{\mathrm{MP},c}$ denote the Marchenko--Pastur law with aspect ratio $c\in(0,\infty)$,
\begin{equation}\label{eq:mp-density}
\begin{aligned}
\dd\mu_{\mathrm{MP},c}(x)
={}&\left(1-\frac1c\right)_+\delta_0(\dd x)\\
&+\frac{\sqrt{(b_c-x)(x-a_c)}}{2\pi cx}
\mathbf 1_{[a_c,b_c]}(x)\,\dd x,
\end{aligned}
\qquad
a_c=(1-\sqrt c)^2,\quad b_c=(1+\sqrt c)^2.
\end{equation}
For $c>1$ the continuous part has total mass $1/c$ and the remaining mass is the deterministic rank-deficiency atom at the origin.
Define
\begin{equation}\label{eq:bias-b}
b_{\tau,c}:=\int f_\tau(x)\,\dd\mu_{\mathrm{MP},c}(x).
\end{equation}

\begin{corollary}[Identity-population bias]\label{cor:mp-bias}
Under the assumptions of \cref{thm:general-pop-limit} with $H=\delta_1$, the normalized spectral statistic converges almost surely according to
\begin{equation}\label{eq:mp-limit}
\frac1p\mathfrak A_\tau(S_p)
\xrightarrow{\as}b_{\tau,c}>0.
\end{equation}
Moreover, as $c\downarrow0$,
\begin{equation}\label{eq:small-c-bias}
b_{\tau,c}
=\frac{c}{4(1+2/\tau)}+O(c^2).
\end{equation}
\end{corollary}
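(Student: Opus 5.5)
The plan has three parts: convergence, strict positivity, and the small-$c$ expansion. For the convergence in \eqref{eq:mp-limit}, I would apply \cref{thm:general-pop-limit} with $\Sigma_p=I$. Then $H_p=H=\delta_1$, whose support $\{1\}$ trivially lies in a compact subinterval of $(0,\infty)$. The deformed law $F_{c,\delta_1}$ is exactly $\mu_{\mathrm{MP},c}$, so \eqref{eq:general-pop-limit} gives $\frac1p\mathfrak A_\tau(S_p)\to\int f_\tau\,\dd\mu_{\mathrm{MP},c}=b_{\tau,c}$ almost surely.

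For strict positivity, I would use \eqref{eq:f-properties}: $f_\tau\ge0$, with equality only at $x=1$. For every $c>0$, the measure $\mu_{\mathrm{MP},c}$ has an absolutely continuous part on $[a_c,b_c]$ with positive mass. That mass is $\min(1,1/c)$, and the interval is nondegenerate since $a_c<b_c$. A single point $\{1\}$ carries zero mass under this part. Since $f_\tau$ is continuous and strictly positive off $x=1$, the integral of $f_\tau$ against the density is strictly positive. The atom at $0$ for $c>1$ only adds $(1-1/c)f_\tau(0)>0$, using \eqref{eq:f-zero}.

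For the small-$c$ expansion \eqref{eq:small-c-bias}, I would Taylor expand $f_\tau$ around $x=1$. Only $c<1$ matters, so there is no atom, and the support $[a_c,b_c]$ shrinks to $\{1\}$ with $|x-1|\le 2\sqrt c+c$ on it. Write $f_\tau(x)=\tfrac12 f_\tau''(1)(x-1)^2+\tfrac16 f_\tau'''(1)(x-1)^3+O((x-1)^4)$. The remainder is uniform because $f_\tau$ is smooth near $1$, for example on $[1/2,2]$, which contains the support for small $c$. The Marchenko--Pastur moments are standard: $\int(x-1)\,\dd\mu=0$, $\int(x-1)^2\,\dd\mu=c$, and $\int(x-1)^3\,\dd\mu=c^2$, since the third moment is $1+3c+c^2$. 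The fourth central moment is $O(c^2)$ because $|x-1|^4\le C c^2$ on the support. Combining these with $f_\tau''(1)=1/(2(1+2/\tau))$ from \eqref{eq:f-properties} gives $b_{\tau,c}=\frac{c}{4(1+2/\tau)}+O(c^2)$.

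The main care point is justifying uniformity of the Taylor remainder. This needs the support to shrink into a fixed neighborhood of $1$ on which $f_\tau$ is $C^4$. That holds because $f_\tau$ is real analytic on $(0,\infty)$: the argument of the logarithm is at least $1$ by the proof of \cref{prop:f-tau}, and $\sqrt{x}$ is smooth away from $0$.
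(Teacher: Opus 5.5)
Your proposal is correct and follows the paper's proof essentially step for step. You apply \cref{thm:general-pop-limit} with $F_{c,\delta_1}=\mu_{\mathrm{MP},c}$, get positivity from $f_\tau\ge0$ vanishing only at $x=1$ against the nondegenerate Marchenko--Pastur law, and obtain \eqref{eq:small-c-bias} from a uniform third-order Taylor expansion at $1$ using the centered moments $c$ and $c^2$; your support bound $|x-1|^4=O(c^2)$ is an equally valid substitute for the paper's exact value $2c^2+c^3$.

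One small adjustment: the corollary assumes only $H_p\Rightarrow\delta_1$, not $\Sigma_p=I$, but the theorem gives the same limit $\int f_\tau\,\dd F_{c,\delta_1}=b_{\tau,c}$ in that generality, so you do not need to specialize.
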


\begin{proof}
Since $F_{c,\delta_1}=\mu_{\mathrm{MP},c}$, \cref{thm:general-pop-limit} applies directly.  The spectrum is eventually contained in a fixed compact subset of $[0,\infty)$, so continuity of $f_\tau$ and Definition~\eqref{eq:psd-spectral-extension} give \eqref{eq:mp-limit}.  Strict positivity follows because $f_\tau$ vanishes only at $1$, whereas $\mu_{\mathrm{MP},c}$ is nondegenerate.

For the small-$c$ expansion, $f_\tau$ is $C^4$ on a fixed neighborhood of $1$, and $f_\tau(1)=f_\tau'(1)=0$.  Its Taylor expansion is
\[
f_\tau(1+h)
=\frac{h^2}{4(1+2/\tau)}+\frac{f_\tau^{(3)}(1)}{6}h^3+O(h^4)
\]
uniformly for $|h|$ in that neighborhood.  For all sufficiently small $c$, the Marchenko--Pastur support is contained in this neighborhood, and its centered moments satisfy
\[
\int(x-1)^2\,\dd\mu_{\mathrm{MP},c}(x)=c,\qquad
\int(x-1)^3\,\dd\mu_{\mathrm{MP},c}(x)=c^2,
\]
\[
\int(x-1)^4\,\dd\mu_{\mathrm{MP},c}(x)=2c^2+c^3.
\]
Integrating the fourth-order Taylor expansion gives
\[
b_{\tau,c}=\frac{c}{4(1+2/\tau)}+O(c^2).
\]
\end{proof}

If the finite-$p$ null is exact, $\Sigma_p=I_p$ for every $p$, then the population action is $\Acal_{\rm cov,\tau}(I,I)=0$ while the spectral statistic has the strictly positive limit in \eqref{eq:mp-limit}.  For $c<1$, eventual positive definiteness makes $\mathfrak A_\tau(S_p)$ the genuine nondegenerate-Gaussian plug-in action, so the usual plug-in estimator is inconsistent under the exact null.  For $c\ge1$, only the positive-semidefinite spectral extension is asserted.  The weaker assumption $H_p\Rightarrow\delta_1$ does not by itself impose the finite-$p$ identity null.

\begin{remark}\label{rem:null-mass-collapse}
Consider equal unit masses, a common fixed penalty $\tau$, a common zero mean, and an identity-population sample covariance $S_p$.  Define the spectral proxies
\[
\widehat M_{*,p}^{\rm spec}:=\exp\left\{-\frac{\mathfrak A_\tau(S_p)}{2\tau}\right\},
\qquad
\widehat\Ucal_{p,\tau}^{\rm spec}:=2\tau(1-\widehat M_{*,p}^{\rm spec}).
\]
If $\Theta_\tau(c,H)>0$, Theorem~\ref{thm:general-pop-limit} immediately yields, almost surely,
\begin{equation}\label{eq:mass-collapse-general}
-\frac1p\log\widehat M_{*,p}^{\rm spec}\longrightarrow\frac{\Theta_\tau(c,H)}{2\tau},
\qquad
\widehat M_{*,p}^{\rm spec}\longrightarrow0,
\qquad
\widehat\Ucal_{p,\tau}^{\rm spec}\longrightarrow2\tau.
\end{equation}
For the identity population, $H=\delta_1$ and $\Theta_\tau(c,\delta_1)=b_{\tau,c}>0$.  When $c<1$, eventual positive definiteness makes these spectral proxies coincide from some index onward with the genuine Gaussian plug-in mass and UOT value.  For every $c>0$, sample-covariance noise therefore produces an exponentially small spectral transported-mass proxy at fixed penalty; in the nonsingular regime this is exactly the Gaussian plug-in mass collapse, while for $c\ge1$ it records the rank-deficient spectral continuation.
\end{remark}

\subsection{Gaussian fluctuations}

For an analytic test function $f$, let $\mathfrak m_c^{\rm BS}(f)$ and $\mathfrak v_c^{\rm BS}(f)$ denote the centering and variance functionals in the real Gaussian Bai--Silverstein LSS CLT~\cite{BaiSilverstein2004}.  Appendix~\ref{app:BS-calibration} records their companion-Stieltjes-transform representations and verifies the analyticity of $f_\tau$; related high-dimensional LSS calibrations appear in~\cite{ChenZhengZou2026}.

\begin{corollary}[Bai--Silverstein fluctuations]\label{cor:lss-clt}
Fix $\tau>0$, assume $\Sigma_p=I_p$, let the entries of $X_p$ be independent $N(0,1)$ random variables, and let $c_p=p/n\to c\in(0,1)$.  Put
\[
b_{\tau,c_p}=\int f_\tau(x)\,\dd\mu_{\mathrm{MP},c_p}(x).
\]
Then the centered spectral statistic satisfies the central limit theorem
\begin{equation}\label{eq:uot-clt}
\Acal_{\rm cov,\tau}(I,S_p)-p b_{\tau,c_p}
\Longrightarrow
\N\bigl(\mathfrak m_c^{\rm BS}(f_\tau),\mathfrak v_c^{\rm BS}(f_\tau)\bigr).
\end{equation}
The variance is strictly positive.
\end{corollary}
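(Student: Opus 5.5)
The plan is to apply the real Gaussian Bai--Silverstein LSS CLT~\cite{BaiSilverstein2004} to the single test function $f_\tau$, and then check separately that the limiting variance is positive.

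\emph{Step 1: reduce to a linear spectral statistic.} Since $c<1$, the Bai--Yin theorem~\cite{BaiYin1993} gives, almost surely,
\[
\lambda_{\min}(S_p)\to a_c=(1-\sqrt c)^2>0,\qquad \lambda_{\max}(S_p)\to b_c .
\]
Hence $S_p\succ0$ eventually, and by Proposition~\ref{prop:f-tau},
\[
\Acal_{\rm cov,\tau}(I,S_p)=\mathfrak A_\tau(S_p)=\sum_j f_\tau(\lambda_j(S_p))
\]
eventually almost surely. The left side of \eqref{eq:uot-clt} is therefore the centered statistic $G_p(f_\tau)=p\int f_\tau\,\dd(F^{S_p}-\mu_{\mathrm{MP},c_p})$. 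This is exactly the object in the Bai--Silverstein theorem with $H_p=\delta_1$, since $F_{c_p,\delta_1}=\mu_{\mathrm{MP},c_p}$. Both the Gaussian entries ($\mathbb E x^4=3$) and the identity population meet its hypotheses.

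\emph{Step 2: analyticity of $f_\tau$.} The theorem requires $f_\tau$ to be analytic on an open set containing $[\liminf a_{c_p}\mathbf 1_{(0,1)}(c),\,\limsup b_{c_p}]$. Because $c<1$, this interval is contained in some $[a,b]\subset(0,\infty)$. On a complex neighbourhood of $[a,b]$ we have the following.
\begin{itemize}[leftmargin=2.2em]
\item The principal branches of $\sqrt x$ and $\sqrt{(1+r)(1+rx)}$ are analytic there.
\item The argument of the logarithm is at least $1$ on the real segment, by the inequality in the proof of Proposition~\ref{prop:f-tau}.
\item Shrinking the neighbourhood therefore keeps the argument in $\{\Re w>1/2\}$.
\end{itemize}
So $f_\tau$ is analytic there, which is the verification deferred to Appendix~\ref{app:BS-calibration}. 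The CLT then yields the Gaussian limit with mean $\mathfrak m_c^{\rm BS}(f_\tau)$ and variance $\mathfrak v_c^{\rm BS}(f_\tau)$. One check is needed: the eventual identity from Step 1 changes nothing in distribution, since the two statistics differ only on an event whose probability tends to $0$.

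\emph{Step 3: strict positivity of the variance (main obstacle).} The CLT supplies only a contour double-integral formula, so positivity has to be argued separately. I would use the equivalent real-variable representation for the real Gaussian case,
\[
\mathfrak v_c^{\rm BS}(f)=\frac{1}{2\pi^2}\int_{a_c}^{b_c}\!\!\int_{a_c}^{b_c}\bigl(f'(x)-f'(y)\bigr)^2\,
\log\left|\frac{1+\sqrt{(\ldots)}}{1-\sqrt{(\ldots)}}\right|\ldots\,\dd x\dd y
\]
(the Bai--Silverstein formula with a nonnegative kernel, positive off the diagonal). From this form, $\mathfrak v_c^{\rm BS}(f)=0$ forces $f'$ to be constant on $[a_c,b_c]$.

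An alternative, cleaner route is the Chebyshev-type expansion after substituting $x=1+c+2\sqrt c\cos\theta$:
\[
\mathfrak v(f)=2\sum_{k\ge1}k\,\alpha_k^2,\qquad
\alpha_k=\text{Fourier coefficients of } \theta\mapsto f(1+c+2\sqrt c\cos\theta).
\]
In this form vanishing variance forces $f$ to be affine on the support.

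It remains to exclude affinity for $f_\tau$ on $[a_c,b_c]$. This interval contains a neighbourhood of $x=1$. There $f_\tau(1)=f_\tau'(1)=0$ while $f_\tau''(1)=1/(2(1+2/\tau))>0$ by \eqref{eq:f-properties}, so $f_\tau$ is strictly convex near $1$ and not affine. Real analyticity also rules out affinity on any subinterval. Therefore $\mathfrak v_c^{\rm BS}(f_\tau)>0$.
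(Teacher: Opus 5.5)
Your proposal is correct and follows the paper's proof. Both check that $f_\tau$ extends analytically to a complex neighbourhood of $[a_c,b_c]$ (the logarithm's argument is $\ge1$ on the real support), apply the real Gaussian Bai--Silverstein theorem with centering $p\,b_{\tau,c_p}$, and get strict positivity from the Joukowski/Fourier form $2\sum_{k\ge1}k|\widehat f_k|^2$.

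One caution: drop the real-variable kernel built on $(f'(x)-f'(y))^2$, which is not a correct form. It would give zero variance for $f(x)=x$, whereas the true limit for $\tr S_p$ is $2c$. Also, in the Fourier form a vanishing variance forces $f$ to be constant on the support, not merely affine. Your strict-convexity check therefore more than suffices, and the paper only uses nonconstancy.
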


\begin{proof}
Because $c<1$, the interval $[a_c,b_c]$ is compactly contained in $(0,\infty)$.  Choose the principal square-root and logarithm branches on a simply connected complex neighborhood of this interval that avoids the nonpositive real axis and the zeros of the analytic continuation of the argument in \eqref{eq:f-tau}.  The positivity argument in \cref{prop:f-tau} and compactness of the limiting support allow this neighborhood to be chosen after shrinking it if necessary.  On this neighborhood, $f_\tau$ is analytic, and \eqref{eq:identity-action-lfs} is precisely the corresponding LSS.  The real Gaussian Bai--Silverstein theorem~\cite[Theorem~1.1]{BaiSilverstein2004} then yields \eqref{eq:uot-clt}.  Strict positivity of the variance follows from the equivalent Joukowski/Fourier representation in Appendix~\ref{app:BS-calibration}, since $f_\tau$ is nonconstant on the Marchenko--Pastur support.
\end{proof}

\subsection{Covariance identity testing}\label{sec:test}

The LSS CLT also gives a test of covariance identity.  Consider $H_0:\Sigma=I_p$ based on i.i.d. observations $X_1,\ldots,X_n\sim\N(0,\Sigma)$, initially with known zero mean.  Define
\begin{equation}\label{eq:test-z}
T_{p,n}^{(\tau)}:=\sum_{j=1}^p f_\tau(\lambda_j(S_p)),\qquad
Z_{p,n}^{(\tau)}:=\frac{T_{p,n}^{(\tau)}-p b_{\tau,c_p}-\mathfrak m_{c_p}^{\rm BS}(f_\tau)}
{\sqrt{\mathfrak v_{c_p}^{\rm BS}(f_\tau)}}.
\end{equation}

By continuity in $c$ of the Bai--Silverstein centering and variance functionals for fixed analytic $f_\tau$, Corollary~\ref{cor:lss-clt} and Slutsky's theorem give
\[
Z_{p,n}^{(\tau)}\Longrightarrow\N(0,1)
\qquad\text{under }H_0.
\]
The one-sided rule
\[
\varphi_{p,n}^{(\tau)}:=\mathbf 1\{Z_{p,n}^{(\tau)}>z_{1-\alpha}\}
\]
has asymptotic level $\alpha$.

The separated-alternative consistency result and dense local population expansion are recorded in Appendix~\ref{app:one-sample-consequences}.  The test is not omnibus: spectrally invisible alternatives require additional information.  If the mean is unknown, the centered covariance with divisor $n-1$ has the same Gaussian null calibration after replacing $p/n$ by $p/(n-1)$~\cite{ZhengBaiYao2015}; testing the full Gaussian law would add a mean quadratic-form term.

This closes the scalar LSS benchmark.  We now turn to the genuinely noncommutative two-sample problem, where the KL-UOT action depends on the ridge-product spectrum and hence on relative eigenvectors.

\section{Two-sample ridge-product spectra}\label{sec:critical-estimation}

\subsection{Wishart products and free convolution}\label{sec:free}

We begin with the fixed-penalty regime, where the nonlinear ridge-product structure remains visible at leading order.  Dimension-dependent penalties are deferred to Section~\ref{sec:phase}.  Let $S_0,S_1$ be independent identity-Wishart sample covariances
\[
S_i=\frac1{n_i}X_i^\top X_i,
\qquad
X_i\in\R^{n_i\times p},
\qquad
\frac{p}{n_i}\to c_i\in(0,\infty),
\]
where the entries of $X_0,X_1$ are independent standard Gaussians.  The spectral theory below allows every $c_i>0$; when $c_i<1$ the sample covariances are nondegenerate almost surely for all sufficiently large $p$, whereas $c_i\ge1$ is interpreted through the positive-semidefinite spectral extension.  Put
\begin{equation}\label{eq:h-r}
h_r(x)=\frac{rx}{1+rx},\qquad r=2/\tau,
\end{equation}
and let
\begin{equation}\label{eq:rho-def}
\rho_{c,r}=(h_r)_\#\mu_{\mathrm{MP},c}
\end{equation}
be the push-forward of the Marchenko--Pastur law.

\begin{definition}[PSD spectral extension]\label{def:two-sample-psd-extension}
For $A,B\in\mathbb S_+^p$, set
\[
R_r(A):=h_r(A)=rA(I+rA)^{-1},
\qquad
\mathcal R_r(A,B):=R_r(B)^{1/2}R_r(A)R_r(B)^{1/2},
\]
and define
\begin{equation}\label{eq:two-sample-psd-extension}
\begin{aligned}
\mathfrak A_{\tau}^{(2)}(A,B)
:={}&-\frac{\tau}{2}\log\det(I-R_r(A))
     -\frac{\tau}{2}\log\det(I-R_r(B))\\
&+\tau\log\det\bigl(I-\mathcal R_r(A,B)^{1/2}\bigr).
\end{aligned}
\end{equation}
\end{definition}

The definition is well posed.  Indeed, with
\[
q_A:=\frac{r\|A\|_{\rm op}}{1+r\|A\|_{\rm op}},
\qquad
q_B:=\frac{r\|B\|_{\rm op}}{1+r\|B\|_{\rm op}},
\]
one has $q_A,q_B<1$ and
\[
0\preceq R_r(A)\preceq q_A I,
\qquad
0\preceq R_r(B)\preceq q_B I,
\qquad
0\preceq\mathcal R_r(A,B)\preceq q_Aq_B I.
\]
Hence
\[
I-R_r(A)\succ0,
\qquad
I-R_r(B)\succ0,
\qquad
I-\mathcal R_r(A,B)^{1/2}\succeq
(1-\sqrt{q_Aq_B})I\succ0,
\]
so the three logarithms in \eqref{eq:two-sample-psd-extension} are finite.  If $A,B\succ0$, Corollary~\ref{cor:ridge-product} yields $\mathfrak A_{\tau}^{(2)}(A,B)=\Acal_{\rm cov,\tau}(A,B)$.  For singular inputs, \eqref{eq:two-sample-psd-extension} is only a spectral continuation and is not a definition of KL divergence between degenerate Gaussian measures.

\begin{remark}\label{rem:psd-continuity}
The preceding definition is the continuous extension of the equal-penalty Gaussian covariance action from $\mathbb S_{++}^p\times\mathbb S_{++}^p$ to $\mathbb S_+^p\times\mathbb S_+^p$.  More precisely, regularization from the positive-definite cone yields, for every $A,B\succeq0$,
\begin{equation}\label{eq:psd-continuity}
\mathfrak A_{\tau}^{(2)}(A,B)
=\lim_{\varepsilon\downarrow0}
\Acal_{\rm cov,\tau}(A+\varepsilon I,B+\varepsilon I).
\end{equation}
Indeed, $A\mapsto h_r(A)$ is norm-continuous on the positive-semidefinite cone, the principal square root is norm-continuous on positive-semidefinite matrices, and the bounds in Definition~\ref{def:two-sample-psd-extension} leave the three logarithmic arguments uniformly positive for all sufficiently small $\varepsilon$.  Continuity of $\log\det$ on the positive-definite cone then gives \eqref{eq:psd-continuity}.  This statement concerns the spectral functional only and does not identify the right-hand limit with a KL divergence between singular Gaussian measures.
\end{remark}

\begin{lemma}\label{lem:functional-calculus-freeness}
Let $(M_{0,p},M_{1,p})$ be real symmetric matrix pairs whose spectra are eventually contained in fixed compact sets $K_0,K_1\subset\mathbb R$.  Suppose that, almost surely, these pairs converge in normalized mixed moments to a free self-adjoint pair $(x_0,x_1)$ in a tracial $C^*$-probability space $(\mathcal A,\varphi)$, with $\operatorname{spec}(x_i)\subset K_i$.  Thus, for every noncommutative polynomial $Q$,
\[
\frac1p\tr Q(M_{0,p},M_{1,p})
\longrightarrow \varphi\!\left(Q(x_0,x_1)\right).
\]
If $f\in C(K_0)$ and $g\in C(K_1)$, then
\[
(f(M_{0,p}),g(M_{1,p}))
\longrightarrow (f(x_0),g(x_1))
\]
in normalized mixed moments.  Continuous scalar functional calculus therefore preserves first-order asymptotic freeness under a common compact spectral envelope.
\end{lemma}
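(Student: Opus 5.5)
The plan is a Weierstrass approximation argument combined with a uniform operator-norm bound on noncommutative monomials. Work on a fixed realization in the almost-sure event where the mixed moments converge and the spectra of $M_{i,p}$ lie in $K_i$ for all $p\ge p_0$. Fix a noncommutative monomial
\[
W(y_0,y_1)=y_{i_1}y_{i_2}\cdots y_{i_k}
\]
with $k$ letters. By linearity it suffices to prove
\[
\frac1p\tr W(f(M_{0,p}),g(M_{1,p}))\to\varphi(W(f(x_0),g(x_1))).
\]
The limits $f(x_0)$ and $g(x_1)$ are defined by continuous functional calculus in $\mathcal A$, since $\operatorname{spec}(x_i)\subset K_i$. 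Free independence is not used in the approximation step: freeness of $(f(x_0),g(x_1))$ follows because these elements lie in the $C^*$-algebras generated by $x_0$ and $x_1$ respectively. The lemma's last sentence is therefore automatic once moment convergence is proved.

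\emph{Step 1.} Fix $\varepsilon>0$ and choose real polynomials $P,Q$ with
\[
\|f-P\|_{C(K_0)}<\varepsilon,\qquad \|g-Q\|_{C(K_1)}<\varepsilon,
\]
by Weierstrass. Put $F_p=f(M_{0,p})$ and $\tilde F_p=P(M_{0,p})$, and similarly $G_p,\tilde G_p$ and the limit objects $f(x_0),P(x_0),g(x_1),Q(x_1)$. Because every spectrum lies in the corresponding $K_i$, spectral calculus gives the uniform bounds
\[
\|F_p-\tilde F_p\|_{\op}\le\varepsilon,\qquad \|F_p\|_{\op},\ \|\tilde F_p\|_{\op}\le \|f\|_\infty+\varepsilon=:L,
\]
with the analogous bounds for $G_p$ and for the limit elements in the $C^*$-norm.

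\emph{Step 2.} Telescope the monomial, replacing one letter at a time. Each replacement changes the word by at most $\varepsilon L^{k-1}$ in operator norm. The normalized trace satisfies
\[
\Bigl|\frac1p\tr Y\Bigr|\le\|Y\|_{\op},
\]
and $|\varphi(y)|\le\|y\|$ holds for the state $\varphi$. Hence
\[
\Bigl|\frac1p\tr W(F_p,G_p)-\frac1p\tr W(\tilde F_p,\tilde G_p)\Bigr|\le k\,\varepsilon\,L^{k-1},
\]
and the same bound holds in $\mathcal A$ for $W(f(x_0),g(x_1))$ versus $W(P(x_0),Q(x_1))$.

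\emph{Step 3.} The word $W(P(M_{0,p}),Q(M_{1,p}))$ is itself a noncommutative polynomial in $(M_{0,p},M_{1,p})$. By hypothesis its normalized trace converges to $\varphi(W(P(x_0),Q(x_1)))$. A three-$\varepsilon$ argument then gives
\[
\limsup_{p\to\infty}\Bigl|\frac1p\tr W(F_p,G_p)-\varphi(W(f(x_0),g(x_1)))\Bigr|\le 2k\,\varepsilon\,L^{k-1}.
\]
Since $\varepsilon>0$ is arbitrary, the left side is zero.

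The only delicate point is the uniformity in Step 2. It relies on the spectra being eventually contained in the fixed compact sets $K_i$, so the finite-$p$ approximation error is controlled by the same sup-norm on $K_i$ for all large $p$. This is exactly why the common compact envelope is a hypothesis. The other subtlety is that the almost-sure event must be fixed before choosing $P,Q$. This is harmless: the event is the countable intersection over all monomials of the moment-convergence events and the envelope event, and it does not depend on $\varepsilon$.
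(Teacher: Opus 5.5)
Your proposal is correct and follows essentially the same route as the paper. Both arguments use Weierstrass approximation of $f,g$ on $K_0,K_1$, a telescoping bound based on $\frac1p|\tr X|\le\|X\|_{\op}$ and the common spectral envelope, convergence of the polynomial words by hypothesis, and then let $p\to\infty$ before the approximation error vanishes; your phrasing of the freeness step via the $C^*$-algebras generated by $x_0$ and $x_1$ is slightly more accurate than the paper's ``unital algebras,'' since $f(x_0)$ is only a norm limit of polynomials in $x_0$.
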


\begin{proof}
Choose polynomials $P_m,Q_m$ converging uniformly to $f,g$ on $K_0,K_1$.  For every fixed noncommutative polynomial $W$ in two variables, the assumed asymptotic freeness gives the normalized-trace limit of $W(P_m(M_{0,p}),Q_m(M_{1,p}))$ for each fixed $m$.  On the common spectral envelope, a telescoping expansion of $W$ and the bounds
\begin{align*}
\frac1p|\tr X|&\le\|X\|_{\rm op},\\
\|P_m(M_{0,p})-f(M_{0,p})\|_{\rm op}
&\le\|P_m-f\|_{\infty,K_0}.
\end{align*}
The analogous estimate holds for $g$.  A telescoping expansion then shows that replacing the polynomial approximants by the continuous functional-calculus matrices changes each normalized mixed trace by $o_m(1)$ uniformly in $p$.  Passing first $p\to\infty$ and then $m\to\infty$ proves the claim.  Since $f(x_0)$ and $g(x_1)$ belong to the unital algebras generated by the free variables $x_0$ and $x_1$, they are free.
\end{proof}

\begin{proposition}[Free ridge-product transfer]\label{prop:free-ridge-transfer}
Fix $\tau>0$ and $r=2/\tau$.  Let $M_{0,p},M_{1,p}\in\mathbb S_{+}^p$ be random symmetric positive-semidefinite matrices such that, for some finite constants $L_0,L_1$, their spectra are almost surely eventually contained in $[0,L_i]$, respectively.  Suppose that, almost surely, $(M_{0,p},M_{1,p})$ converges in normalized mixed moments to a free pair of positive operators $(x_0,x_1)$ with laws $\mu_i$ supported in $[0,L_i]$.  Set
\[
\rho_i=(h_r)_\#\mu_i,\qquad i=0,1.
\]
Then the product empirical spectral distribution and normalized action satisfy, almost surely,
\begin{equation}\label{eq:general-free-product-law}
F^{\mathcal R_r(M_{0,p},M_{1,p})}
\Rightarrow \rho_1\boxtimes\rho_0,
\end{equation}
and
\begin{equation}\label{eq:general-free-action-limit}
\begin{aligned}
\frac1p\mathfrak A_{\tau}^{(2)}(M_{0,p},M_{1,p})
\longrightarrow{}&-\frac{\tau}{2}\sum_{i=0}^1\int\log(1-x)\,\dd\rho_i(x)\\
&+\tau\int\log(1-\sqrt x)\,\dd(\rho_1\boxtimes\rho_0)(x).
\end{aligned}
\end{equation}
\end{proposition}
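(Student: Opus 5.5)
The plan is to reduce everything to Lemma~\ref{lem:functional-calculus-freeness} followed by a moment argument on a compact envelope. First, apply the lemma with $f=g=h_r$ on $K_i=[0,L_i]$. Almost surely, the pair $(R_r(M_{0,p}),R_r(M_{1,p}))$ then converges in normalized mixed moments to $(y_0,y_1):=(h_r(x_0),h_r(x_1))$. This pair is free, with laws $\rho_i$ supported in $[0,q_i]$, where $q_i:=rL_i/(1+rL_i)<1$.

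Second, handle the matrix square root $R_r(M_{1,p})^{1/2}$. Apply the lemma once more with $g(x)=\sqrt{h_r(x)}$, which is continuous on $[0,L_1]$. Then the pair $(R_r(M_{0,p}),R_r(M_{1,p})^{1/2})$ converges to $(y_0,y_1^{1/2})$. For every $k$ we have
\[
\frac1p\tr\bigl(\mathcal R_r(M_{0,p},M_{1,p})^k\bigr)=\frac1p\tr\bigl((R_r(M_{1,p})^{1/2}R_r(M_{0,p})R_r(M_{1,p})^{1/2})^k\bigr),
\]
which is a fixed noncommutative polynomial in this pair. Its limit is $\varphi((y_1^{1/2}y_0y_1^{1/2})^k)$. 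By freeness of $y_0,y_1$, the law of $y_1^{1/2}y_0y_1^{1/2}$ is $\rho_1\boxtimes\rho_0$, by the definition of free multiplicative convolution for positive operators.

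Third, pass from moments to weak convergence. By the bounds following Definition~\ref{def:two-sample-psd-extension}, the spectrum of $\mathcal R_r(M_{0,p},M_{1,p})$ is eventually contained in $[0,q_0q_1]\subset[0,1)$. The limit law is supported in the same interval, since $\|y_1^{1/2}y_0y_1^{1/2}\|\le q_0q_1$. Moment convergence on a common compact interval determines the law by Weierstrass approximation. This gives \eqref{eq:general-free-product-law} almost surely, on the intersection of the countably many full-measure events, one for each moment.

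Finally, derive the action limit. Write $\frac1p\mathfrak A_\tau^{(2)}$ as three normalized linear spectral statistics:
\[
-\tfrac{\tau}{2}\int\log(1-x)\,\dd F^{R_r(M_{i,p})}(x),\quad i=0,1,\qquad\text{and}\qquad \tau\int\log(1-\sqrt x)\,\dd F^{\mathcal R_r}(x).
\]
The marginal laws satisfy $F^{R_r(M_{i,p})}\Rightarrow\rho_i$, which is a special case of the moment convergence above. The test functions $\log(1-x)$ and $\log(1-\sqrt x)$ are continuous and bounded on $[0,q_i]$ and $[0,q_0q_1]$ respectively, because these intervals stay away from $1$. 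Weak convergence on these fixed compact envelopes therefore gives \eqref{eq:general-free-action-limit}.

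The main obstacle is bookkeeping rather than analysis. One needs a single almost-sure event on which all moment limits hold and the eventual spectral envelopes are in force. One also needs the envelope to keep $1-\sqrt{x}$ uniformly away from zero. The product bound $\|\mathcal R\|\le q_0q_1<1$ handles the latter, and countable intersection handles the former.
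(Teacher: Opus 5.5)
Your proposal is correct and follows essentially the same route as the paper. You transfer asymptotic freeness through $h_r$ and the square root via Lemma~\ref{lem:functional-calculus-freeness}, identify the law of $h_r(x_1)^{1/2}h_r(x_0)h_r(x_1)^{1/2}$ as $\rho_1\boxtimes\rho_0$, upgrade moment convergence to weak convergence on the envelope $[0,q_0q_1]$, and use that $\log(1-x)$ and $\log(1-\sqrt x)$ are bounded and continuous away from $1$. The only cosmetic difference is that you apply the lemma directly with $g=\sqrt{h_r}\in C([0,L_1])$, whereas the paper reruns the polynomial approximation for $x\mapsto\sqrt x$ on the ridge spectra $[0,q_1]$; the two are equivalent.
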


\begin{proof}
Apply \cref{lem:functional-calculus-freeness} on $K_i=[0,L_i]$ with $f=g=h_r$.  The limiting ridge variables $h_r(x_0)$ and $h_r(x_1)$ are free and have laws $\rho_0$ and $\rho_1$.  Put $q_i=h_r(L_i)<1$.  The ridge spectra lie in $[0,q_i]$, so the same polynomial-approximation argument applies to the continuous map $x\mapsto\sqrt{x}$ even when the limiting lower edge is zero.  Mixed moments involving $h_r(M_{1,p})^{1/2}$ converge to the corresponding moments involving $h_r(x_1)^{1/2}$.  The normalized moments of $h_r(M_{1,p})^{1/2}h_r(M_{0,p})h_r(M_{1,p})^{1/2}$ converge to those of $h_r(x_1)^{1/2}h_r(x_0)h_r(x_1)^{1/2}$, whose law is $\rho_1\boxtimes\rho_0$.  Moreover, the finite-$p$ product spectra lie in $[0,q_0q_1]$, so compact moment convergence upgrades to the weak convergence in \eqref{eq:general-free-product-law}.  Since $q_i<1$ and $q_0q_1<1$, the functions $\log(1-x)$ and $\log(1-\sqrt x)$ are bounded and continuous on the relevant closed envelopes, including at the origin.  Applying Definition~\ref{def:two-sample-psd-extension} now gives \eqref{eq:general-free-action-limit}.
\end{proof}

No lower spectral gap is required in Proposition~\ref{prop:free-ridge-transfer}.  When the finite-$p$ matrices are positive definite, the left side of \eqref{eq:general-free-action-limit} is the genuine Gaussian covariance action by Definition~\ref{def:two-sample-psd-extension}.

\begin{corollary}[Identity-Wishart spectral limit]\label{cor:wishart-free-limit}
Under the identity-Wishart assumptions above with $c_0,c_1\in(0,\infty)$, the normalized two-sample spectral action converges almost surely as
\begin{equation}\label{eq:two-sample-limit}
\frac1p\mathfrak A_{\tau}^{(2)}(S_0,S_1)
\xrightarrow{\as}
b_\tau^{(2)}(c_0,c_1),
\end{equation}
where
\begin{equation}\label{eq:free-bias}
\begin{aligned}
b_\tau^{(2)}(c_0,c_1)
={}&-\frac{\tau}{2}\sum_{i=0}^1\int\log(1-x)\,\dd\rho_{c_i,r}(x)\\
&+\tau\int\log(1-\sqrt x)\,
\dd\bigl(\rho_{c_1,r}\boxtimes\rho_{c_0,r}\bigr)(x).
\end{aligned}
\end{equation}
Here $\boxtimes$ denotes free multiplicative convolution on $[0,\infty)$.  The limiting product law also has an atom at zero with mass
\begin{equation}\label{eq:product-zero-atom}
(\rho_{c_1,r}\boxtimes\rho_{c_0,r})(\{0\})
=\max_{i=0,1}\left(1-\frac1{c_i}\right)_+.
\end{equation}
\end{corollary}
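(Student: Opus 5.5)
The plan is to reduce the corollary to \cref{prop:free-ridge-transfer} with $M_{i,p}=S_i$ and $\mu_i=\mu_{\mathrm{MP},c_i}$. There are three things to check, and then a separate computation for the atom.

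First, the spectral envelope. By the Bai--Yin upper-edge theorem, $\lambda_{\max}(S_i)\to(1+\sqrt{c_i})^2$ almost surely. Taking $L_i=(1+\sqrt{c_i^*})^2+1$ with $c_i^*>c_i$ therefore places $\operatorname{spec}(S_i)$ eventually inside $[0,L_i]$ almost surely. The support of $\mu_{\mathrm{MP},c_i}$, including the atom at $0$ when $c_i>1$, also lies in $[0,L_i]$.

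Second, almost-sure asymptotic freeness in normalized mixed moments. I would use the fact that the law of $S_0$ is invariant under orthogonal conjugation. So $(S_0,S_1)$ has the same joint law as $(OS_0O^\top,S_1)$, with $O$ Haar orthogonal and independent of everything else. The almost-sure asymptotic freeness of an independent Haar-rotated pair with converging spectra and bounded norms (Collins--Male, or Voiculescu/Collins--\'Sniady for the orthogonal case) then gives the convergence of mixed moments to those of a free pair $(x_0,x_1)$ with laws $\mu_{\mathrm{MP},c_i}$. Alternatively, strong asymptotic freeness of independent Wishart matrices~\cite{CollinsMale2014} gives this directly. Two points need care: the almost-sure statement has to transfer along the equality in law of the whole sequence, or one simply cites the Wishart result, and the cited results are for the real orthogonal ensemble. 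Since $h_r(\mu_{\mathrm{MP},c_i})=\rho_{c_i,r}$, \cref{prop:free-ridge-transfer} then yields \eqref{eq:two-sample-limit} with $b_\tau^{(2)}$ given by \eqref{eq:free-bias}.

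Third, the atom formula \eqref{eq:product-zero-atom}. I would use the standard result on kernels in free products (Bercovici--Voiculescu, or Belinschi's atom theorem for $\boxtimes$): for free positive $a,b$,
\[
\mu_{a^{1/2}ba^{1/2}}(\{0\})=\max\bigl(\mu_a(\{0\}),\mu_b(\{0\})\bigr).
\]
Here the map $h_r$ is injective and sends $0\mapsto0$, so $\rho_{c_i,r}(\{0\})=\mu_{\mathrm{MP},c_i}(\{0\})=(1-1/c_i)_+$. This gives \eqref{eq:product-zero-atom}.

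A self-contained check of this atom formula is where I expect the main difficulty. The lower bound $\geq\max$ follows from the inclusion $\ker(b)\subset\ker(a^{1/2}ba^{1/2})$ together with the analogous statement after exchanging the roles of $a$ and $b$ via \cref{lem:ridge-product-symmetry}. The reverse inequality needs the general position of free projections: the projections onto $\ker a$ and $\ker b$ intersect with trace $\max(0,\varphi(p_a)+\varphi(p_b)-1)$. One must then verify that the kernel of the product is exactly $\ker b$ joined with $a^{-1/2}$-preimage considerations, and this is where I would rely on the cited theorem rather than reprove it. Finally, I note that the weak convergence in \eqref{eq:general-free-product-law} cannot by itself be used to conclude the finite-$p$ atom, but it is not needed, because the statement concerns only the limit law.
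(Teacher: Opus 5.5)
Your proposal is correct and follows the paper's proof essentially step for step. The envelope comes from the Marchenko--Pastur and upper-edge limits, and almost-sure mixed-moment convergence comes from real-Wishart strong asymptotic freeness; the paper cites \cite{LodhiaLevinLevina2022}, Appendix~B, which allows different ratios $c_i$, rather than \cite{CollinsMale2014}. From there you apply \cref{prop:free-ridge-transfer}, and the atom follows from $h_r(0)=0$, injectivity of $h_r$, and the standard zero-atom formula for $\boxtimes$.

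One small slip appears in your optional self-contained check. The inclusion that holds is $\ker a\subset\ker(a^{1/2}ba^{1/2})$, not $\ker b\subset\ker(a^{1/2}ba^{1/2})$. After exchanging roles (via traciality in the limit algebra), your lower bound $\ge\max$ is unaffected.
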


\begin{proof}
The Marchenko--Pastur theorem and upper-edge convergence give $F^{S_i}\Rightarrow\mu_{\mathrm{MP},c_i}$ and an almost-sure common compact upper spectral envelope.  More strongly, the real-Wishart strong asymptotic-freeness result of Lodhia, Levin and Levina~\cite[Appendix~B]{LodhiaLevinLevina2022} applies to independent matrices with possibly different ratios $p/n_i\to c_i>0$ and yields strong, hence normalized mixed-moment, convergence to free Marchenko--Pastur variables.  Proposition~\ref{prop:free-ridge-transfer} therefore gives \eqref{eq:two-sample-limit}--\eqref{eq:free-bias}.  Since the ridge map fixes the origin,
$\rho_{c_i,r}(\{0\})=(1-c_i^{-1})_+$.  The standard zero-atom formula for multiplicative free convolution then gives \eqref{eq:product-zero-atom}.  Eventual positive definiteness for $c_i<1$ follows from the lower-edge limit.
\end{proof}

When $c_0,c_1<1$, the sample covariances are eventually positive definite almost surely, and \eqref{eq:two-sample-limit} is the genuine Gaussian KL-UOT covariance-action limit.  If either aspect ratio is at least one, the statement is interpreted through the positive-semidefinite spectral continuation.

The strong-convergence input in the next statement is standard; the KL-UOT-specific consequence is the identification of the limiting ridge-product spectrum, and in the symmetric case its explicit algebraic edges.

\begin{proposition}[Ridge-product spectral convergence]\label{prop:strong-ridge-product}
Under the identity-Wishart assumptions above, set
\[
\mathcal R_p:=\mathcal R_r(S_0,S_1),
\qquad
\lambda_{c_0,c_1,r}:=\rho_{c_1,r}\boxtimes\rho_{c_0,r}.
\]
Then the ridge-product spectrum converges almost surely in Hausdorff distance:
\begin{equation}\label{eq:ridge-spectrum-hausdorff}
 d_H\!\left(\operatorname{spec}(\mathcal R_p),
 \operatorname{supp}(\lambda_{c_0,c_1,r})\right)\longrightarrow0,
\end{equation}
where $d_H$ denotes Hausdorff distance on compact subsets of $\mathbb R$.  Consequently, the extreme eigenvalues converge to the endpoints of the limiting support:
\begin{equation}\label{eq:ridge-extreme-general}
\lambda_{\min}(\mathcal R_p)\to\min\operatorname{supp}(\lambda_{c_0,c_1,r}),
\qquad
\lambda_{\max}(\mathcal R_p)\to\max\operatorname{supp}(\lambda_{c_0,c_1,r}).
\end{equation}
In the symmetric nonsingular case $c_0=c_1=c\in(0,1)$, the support endpoints from Corollary~\ref{cor:ridge-product-edge} are also the almost-sure extreme-eigenvalue limits:
\begin{equation}\label{eq:symmetric-ridge-edge-convergence}
\lambda_{\min}(\mathcal R_p)\to E_-(c,r),
\qquad
\lambda_{\max}(\mathcal R_p)\to E_+(c,r)
\qquad\text{a.s.}
\end{equation}
\end{proposition}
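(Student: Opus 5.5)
The plan is to upgrade the normalized-trace convergence behind Corollary~\ref{cor:wishart-free-limit} to strong convergence, i.e.\ convergence of operator norms, and then pass from norms to Hausdorff convergence of spectra. The input is again the real-Wishart strong asymptotic freeness of Lodhia, Levin and Levina~\cite[Appendix~B]{LodhiaLevinLevina2022}. Almost surely, for every noncommutative polynomial $Q$,
\[
\normop{Q(S_0,S_1)}\to\|Q(x_0,x_1)\|,
\]
where $x_0,x_1$ are free Marchenko--Pastur variables, and the spectra of $S_i$ lie eventually in a fixed compact $[0,L_i]$ by the Bai--Yin upper edge.

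\emph{Step 1: transfer strong convergence through the functional calculus.} This is the norm analogue of \cref{lem:functional-calculus-freeness}. Approximate $h_r$ on $[0,L_i]$ uniformly by polynomials $P_m$. Approximate $\sqrt{\cdot}$ on $[0,q_1]$, with $q_1=h_r(L_1)<1$, by polynomials $\tilde P_m$. Then form the polynomial
\[
W_m(S_0,S_1)=\tilde P_m(P_m(S_1))\,P_m(S_0)\,\tilde P_m(P_m(S_1)).
\]
Its operator-norm distance to $\mathcal R_p$ is bounded by $\varepsilon_m\to0$, uniformly in $p$ on the spectral envelope. The same bound holds in the limit algebra, comparing $W_m(x_0,x_1)$ with $y:=h_r(x_1)^{1/2}h_r(x_0)h_r(x_1)^{1/2}$. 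A $3\varepsilon$ argument, applied to every polynomial $G$ in $\mathcal R_p$, then gives
\[
\normop{G(\mathcal R_p)}\to\|G(y)\|\quad\text{a.s.}
\]
By Weierstrass this extends to every continuous $G$ on $[0,1]$. Only countably many $(m,G)$ are needed, so a single almost-sure event suffices.

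\emph{Step 2: from norms to Hausdorff convergence.} The law of $y$ is $\lambda_{c_0,c_1,r}$ by Proposition~\ref{prop:free-ridge-transfer}. Since $\varphi$ is faithful on the $C^*$-algebra generated by $x_0,x_1$ (free product of faithful states), $\operatorname{spec}(y)=\operatorname{supp}(\lambda_{c_0,c_1,r})$. The Hausdorff distance has two halves.
\begin{itemize}[leftmargin=2.2em]
\item \emph{No outliers.} Take continuous $G\ge0$ that vanishes on the $\varepsilon$-neighbourhood of the support and equals $1$ off the $2\varepsilon$-neighbourhood. Then $\normop{G(\mathcal R_p)}\to0$, so eventually no eigenvalue lies farther than $2\varepsilon$ from the support.
\item \emph{No gaps.} Alternatively, use the weak convergence \eqref{eq:general-free-product-law}: every support point $x$ has a neighbourhood of positive limiting mass, hence eigenvalues within $\varepsilon$ of $x$ eventually. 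Compactness of the support makes this uniform over a finite $\varepsilon$-net.
\end{itemize}
Together these give \eqref{eq:ridge-spectrum-hausdorff}. Since min and max are $1$-Lipschitz for the Hausdorff distance on compact subsets of $\R$, \eqref{eq:ridge-extreme-general} follows.

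\emph{Step 3: symmetric case.} For $c_0=c_1=c\in(0,1)$, Corollary~\ref{cor:ridge-product-edge} identifies $\operatorname{supp}(\lambda_{c,c,r})=[E_-(c,r),E_+(c,r)]$. Substituting into \eqref{eq:ridge-extreme-general} yields \eqref{eq:symmetric-ridge-edge-convergence}.

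The main obstacle is Step 1. The square root is not Lipschitz at $0$, and for $c_1\ge1$ the lower edge of $h_r(S_1)$ is zero. Uniform polynomial approximation of $\sqrt{\cdot}$ on the closed interval $[0,q_1]$ still gives operator-norm control, because the functional calculus satisfies $\normop{u(A)-v(A)}\le\|u-v\|_{\infty}$ on the spectral envelope. The care needed is in making the approximation errors uniform in $p$ and in exchanging the limits $p\to\infty$ and $m\to\infty$.
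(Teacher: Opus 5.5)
Your proposal is correct and follows the paper's route. The ingredients are the same: strong asymptotic freeness of $(S_0,S_1)$ from Lodhia--Levin--Levina, and transfer of strong convergence through $h_r$ and the principal square root by uniform polynomial approximation on the compact envelope. Then faithfulness of the limiting trace identifies $\operatorname{spec}(y)$ with $\operatorname{supp}(\lambda_{c_0,c_1,r})$, and Corollary~\ref{cor:ridge-product-edge} handles the symmetric case.

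One small fix in Step~1: approximate the composite $\sqrt{h_r}$ on $[0,L_1]$ by a single polynomial. Your $\tilde P_m$ is only controlled on $[0,q_1]$, while $\operatorname{spec}P_m(S_1)$ may leave that interval.

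The only real difference is in Step~2. There you prove by hand the implication from strong convergence to Hausdorff convergence of spectra, using a bump function to rule out outliers and the portmanteau theorem to rule out gaps. The paper instead imports this from \cite[Proposition~2.1]{CollinsMale2014}.
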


\begin{proof}
By the real-Wishart strong asymptotic-freeness result cited in the preceding proof, $(S_0,S_1)$ converges strongly to a free positive pair $(s_0,s_1)$ with laws $\mu_{\mathrm{MP},c_0}$ and $\mu_{\mathrm{MP},c_1}$.  Strong convergence is stable under continuous functional calculus on the almost-sure common compact spectral envelope; polynomial approximation therefore transfers strong convergence through $h_r$ and the principal square root.  Hence
\[
\mathcal R_p\longrightarrow
h_r(s_1)^{1/2}h_r(s_0)h_r(s_1)^{1/2}
\]
strongly.  The limit has law $\lambda_{c_0,c_1,r}$.  We now apply \cite[Proposition~2.1]{CollinsMale2014} to the already established one-variable strongly convergent self-adjoint sequence $\mathcal R_p$.  That proposition identifies strong convergence with weak convergence of the spectral measures together with Hausdorff convergence of the spectra.  Since the limiting variable is realized in a faithful tracial $C^*$-probability space, its spectrum is the support of its spectral law $\lambda_{c_0,c_1,r}$.  This gives \eqref{eq:ridge-spectrum-hausdorff}, and continuity of the minimum and maximum under Hausdorff convergence of nonempty compact subsets of $\mathbb R$ gives \eqref{eq:ridge-extreme-general}.  The symmetric statement follows from Corollary~\ref{cor:ridge-product-edge}.
\end{proof}

\begin{theorem}[Random-orientation ridge-product limit]\label{thm:random-orientation-free}
For $i=0,1$, let $D_{i,p}\succ0$ be deterministic with empirical laws $H_{i,p}\Rightarrow H_i$ and spectra contained in a fixed interval $[\underline\lambda_i,\overline\lambda_i]\Subset(0,\infty)$.  Let $X_{i,p}\in\mathbb R^{n_i\times p}$ have independent standard Gaussian entries with $p/n_i\to c_i\in(0,\infty)$, let $U_{i,p}$ be independent Haar orthogonal matrices independent of the $X_{i,p}$, and define
\[
T_{i,p}:=D_{i,p}^{1/2}\frac{X_{i,p}^{\top}X_{i,p}}{n_i}D_{i,p}^{1/2},
\qquad
S_{i,p}:=U_{i,p}T_{i,p}U_{i,p}^{\top}.
\]
Put
\[
\rho_i^{\rm def}:=(h_r)_\#F_{c_i,H_i}.
\]
Then the product empirical law and normalized action satisfy, almost surely,
\begin{equation}\label{eq:random-orientation-product-law}
F^{\mathcal R_r(S_{0,p},S_{1,p})}
\Rightarrow
\rho_1^{\rm def}\boxtimes\rho_0^{\rm def},
\end{equation}
and
\begin{equation}\label{eq:random-orientation-action}
\begin{aligned}
\frac1p\mathfrak A_{\tau}^{(2)}(S_{0,p},S_{1,p})
\longrightarrow{}&-\frac{\tau}{2}\sum_{i=0}^1
\int\log(1-x)\,\dd\rho_i^{\rm def}(x)\\
&+\tau\int\log(1-\sqrt x)\,
\dd(\rho_1^{\rm def}\boxtimes\rho_0^{\rm def})(x).
\end{aligned}
\end{equation}
\end{theorem}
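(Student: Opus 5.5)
The plan is to verify the hypotheses of Proposition~\ref{prop:free-ridge-transfer} for the pair $(M_{0,p},M_{1,p})=(S_{0,p},S_{1,p})$, with limiting laws $\mu_i=F_{c_i,H_i}$. Proposition~\ref{prop:free-ridge-transfer} then gives both \eqref{eq:random-orientation-product-law} and \eqref{eq:random-orientation-action} directly. Two ingredients are needed: a common compact spectral envelope, and almost-sure asymptotic freeness of the pair in normalized mixed moments with the correct marginal laws.

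The marginal inputs come from the one-sample theory. Conjugation by $U_{i,p}$ preserves the spectrum, so $F^{S_{i,p}}=F^{T_{i,p}}$. By the deformed Marchenko--Pastur theorem used in the proof of Theorem~\ref{thm:general-pop-limit} (Gaussian entries, $H_{i,p}\Rightarrow H_i$, supports in $[\underline\lambda_i,\overline\lambda_i]$), we get $F^{T_{i,p}}\Rightarrow F_{c_i,H_i}$ almost surely. The Bai--Yin upper-edge bound gives $\lambda_{\max}(S_{i,p})\le\overline\lambda_i(1+\sqrt{c_i^*})^2+o(1)$ almost surely. So $L_i:=\overline\lambda_i(1+\sqrt{c_i^*})^2+1$ serves as an eventual envelope. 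The support of $F_{c_i,H_i}$ lies in $[0,L_i]$ by the free-envelope argument \eqref{eq:deterministic-mp-envelope}. Compact support together with weak convergence then yields convergence of all normalized moments $\frac1p\tr S_{i,p}^k$ to the moments of $F_{c_i,H_i}$.

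For freeness, condition on $(X_{0,p},X_{1,p})$, so that $T_{0,p},T_{1,p}$ are fixed matrices with uniformly bounded norms and convergent spectral distributions. Write
\[
(S_{0,p},S_{1,p})=U_{1,p}\bigl(V_pT_{0,p}V_p^\top,\,T_{1,p}\bigr)U_{1,p}^\top,\qquad V_p:=U_{1,p}^\top U_{0,p}.
\]
The matrix $V_p$ is Haar orthogonal and independent of the $T_{i,p}$, and the outer conjugation does not change normalized traces. The almost-sure asymptotic freeness of Haar-orthogonally conjugated deterministic matrices with bounded norms and converging spectral laws (Voiculescu; Collins--\'Sniady for the orthogonal group) then gives, conditionally and hence almost surely by Fubini, convergence of every normalized mixed moment to that of a free pair $(x_0,x_1)$ with laws $F_{c_0,H_0}$ and $F_{c_1,H_1}$. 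The boundedness condition holds only eventually and almost surely. To handle this, I would truncate: replace $T_{i,p}$ by $T_{i,p}\mathbf 1\{\|T_{i,p}\|_{\rm op}\le L_i\}$, which agrees with $T_{i,p}$ eventually.

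I expect this almost-sure upgrade to be the main obstacle. The standard Haar results are often stated in expectation, or in probability, with a deterministic sequence of inputs. I would obtain almost sure convergence from concentration of measure on $O(p)$: for each fixed noncommutative monomial, the normalized trace is a Lipschitz function of $V_p$ with constant $O(p^{-1/2})$ in Hilbert--Schmidt norm. Gromov--Milman concentration then gives tail bounds of order $\exp(-cp^2\varepsilon^2)$, and Borel--Cantelli over the countably many monomials completes the upgrade. With these two steps in place, Proposition~\ref{prop:free-ridge-transfer} applies verbatim with $\rho_i=\rho_i^{\rm def}$.
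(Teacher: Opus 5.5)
Your proposal is correct and follows essentially the paper's route. You condition on the Wishart data, reduce the two independent orientations to a single Haar conjugation $V_p$ (you conjugate $T_{0,p}$ where the paper conjugates $T_{1,p}$, which is immaterial), invoke orthogonal-Haar asymptotic freeness pathwise, integrate out the Gaussian data by Fubini/Tonelli, and feed the resulting free limit with laws $F_{c_i,H_i}$ into Proposition~\ref{prop:free-ridge-transfer}. The differences are minor: you derive the almost-sure Haar freeness yourself from expectation-level freeness plus concentration and Borel--Cantelli, and you truncate instead of fixing $\omega$ in the probability-one event, whereas the paper simply cites an almost-sure first-order formulation \cite[Proposition~2.9]{CollinsHayase2023}.
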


\begin{proof}
The deformed Marchenko--Pastur theorem gives $F^{T_{i,p}}\Rightarrow F_{c_i,H_i}$ almost surely, and the uniform population spectral bounds together with the Wishart upper edge give $\sup_p\|T_{i,p}\|_{\rm op}<\infty$ almost surely after discarding finitely many indices.  Let $\Omega_T$ be the probability-one event on which both of these assertions hold for $i=0,1$.  Fix $\omega\in\Omega_T$ and regard the two sequences $T_{0,p}(\omega),T_{1,p}(\omega)$ as deterministic.  Since a simultaneous conjugation does not affect mixed normalized traces, the pair $(S_{0,p},S_{1,p})$ is equivalent for this purpose to
\[
T_{0,p}(\omega),
\qquad
V_pT_{1,p}(\omega)V_p^\top,
\qquad
V_p:=U_{0,p}^\top U_{1,p},
\]
where $V_p$ is Haar orthogonal and independent of the fixed matrices.  The weak convergence of the empirical laws together with the common operator-norm envelope implies convergence of every normalized moment of the conditioned deterministic sequences.  Thus the hypotheses of the orthogonal-Haar asymptotic-freeness theorem are met, and for this fixed $\omega$ that theorem gives conditional probability one over the Haar variables for convergence to a free pair with laws $F_{c_0,H_0}$ and $F_{c_1,H_1}$; see, for example, \cite[Proposition~2.9]{CollinsHayase2023} for this first-order formulation.  Applying Proposition~\ref{prop:free-ridge-transfer} on that conditional probability-one event yields \eqref{eq:random-orientation-product-law} and \eqref{eq:random-orientation-action}.  Thus, if $E$ denotes the desired joint convergence event,
\[
\Pr_U(E\mid T)(\omega)=1
\qquad(\omega\in\Omega_T).
\]
Since $\Pr(\Omega_T)=1$, Tonelli's theorem gives
\[
\Pr_{T,U}(E)
=\int \Pr_U(E\mid T)\,\dd\Pr_T
=1.
\]
When $c_i<1$, the lower-edge bound and $D_{i,p}\succeq\underline\lambda_iI$ imply eventual positive definiteness.
\end{proof}

The construction is equivalent in law to Gaussian sample covariance matrices with randomly oriented population covariances $U_{i,p}D_{i,p}U_{i,p}^{\top}$.  If $c_0,c_1<1$, eventual positive definiteness identifies \eqref{eq:random-orientation-action} with the corresponding Gaussian KL-UOT covariance-action limit.

We next specialize the product law to its small-aspect-ratio behavior and, in the symmetric Wishart model, to an explicit analytic description used in Section~\ref{sec:numerics}.

\begin{proposition}[Small-aspect-ratio expansion]\label{prop:two-sample-small-c}
For fixed $\tau>0$, the two-sample limit admits the expansion, as $c_0,c_1\downarrow0$,
\begin{equation}\label{eq:two-sample-small-c}
b_\tau^{(2)}(c_0,c_1)
=\frac{c_0+c_1}{4(1+2/\tau)}
+O\!\left((\sqrt{c_0}+\sqrt{c_1})(c_0+c_1)\right).
\end{equation}
\end{proposition}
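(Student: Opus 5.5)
Since $b_\tau^{(2)}(c_0,c_1)$ is defined in terms of limiting laws only, I will compute it from its operator realization rather than through Stieltjes or $S$-transforms. Fix free positive variables $s_0,s_1$ in a tracial $C^*$-probability space $(\mathcal A,\varphi)$ with laws $\mu_{\mathrm{MP},c_0},\mu_{\mathrm{MP},c_1}$. For $c_i$ small the support $[a_{c_i},b_{c_i}]$ lies in $[1-3\sqrt{c_i},1+3\sqrt{c_i}]$, so $s_i=1+\epsilon_i$ with $\|\epsilon_i\|\le 3\sqrt{c_i}$. By \eqref{eq:free-bias} and the transfer argument of Proposition~\ref{prop:free-ridge-transfer},
\[
b_\tau^{(2)}(c_0,c_1)=\varphi\bigl(G(s_0,s_1)\bigr),
\]
where
\[
G(s_0,s_1)=-\tfrac{\tau}{2}\log(1-h_r(s_0))-\tfrac{\tau}{2}\log(1-h_r(s_1))+\tau\log\bigl(1-\mathcal R^{1/2}\bigr)
\]
and $\mathcal R=h_r(s_1)^{1/2}h_r(s_0)h_r(s_1)^{1/2}$. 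This is the noncommutative version of the Gaussian covariance action $\Acal_{\rm cov,\tau}(s_0,s_1)$ of Corollary~\ref{cor:ridge-product}.

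\textbf{Step 1: the commutative model.} The first step is to identify the quadratic part of $G$ at $(1,1)$. In the commutative case $G(x,y)$ is the scalar action $\Acal_{\rm cov,\tau}(x,y)$ for $p=1$. The scale invariance $\Acal_{\rm cov,\tau}(x,y)=\Acal_{\rm cov,\tau}(1,y/x)$ does not hold here, so I will expand directly. $G$ is real-analytic near $(1,1)$ and vanishes together with its gradient there, because the action is nonnegative and zero on the diagonal. Its Hessian is therefore $\kappa\begin{pmatrix}1&-1\\-1&1\end{pmatrix}$, since $G(x,x)=0$ forces the quadratic form to vanish along $(1,1)$. Restricting to $x=1$ and using \eqref{eq:f-local}, $G(1,1+h)=f_\tau(1+h)$, which gives $\kappa/2=1/(4(1+2/\tau))$. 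Hence
\[
G(1+e_0,1+e_1)\approx\frac{(e_1-e_0)^2}{4(1+2/\tau)}.
\]

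\textbf{Step 2: noncommutative expansion.} Next I will expand $G(1+\epsilon_0,1+\epsilon_1)$ in the Banach algebra by holomorphic functional calculus. All functions involved ($h_r$, $\sqrt{\cdot}$ near $h_r(1)>0$, and $\log(1-\cdot)$ near values below $1$) are analytic on a fixed neighborhood of the relevant spectra, so the expansion converges in norm. This gives
\[
G=L(\epsilon_0,\epsilon_1)+Q(\epsilon_0,\epsilon_1)+O\bigl((\|\epsilon_0\|+\|\epsilon_1\|)^3\bigr),
\]
with $L$ linear and $Q$ a noncommutative quadratic polynomial. Under the trace, cyclicity makes $\varphi(Q)$ depend only on $\varphi(\epsilon_i^2)$ and $\varphi(\epsilon_0\epsilon_1)$, with the same coefficients as the commutative Hessian; terms like $\epsilon_0\epsilon_1$ and $\epsilon_1\epsilon_0$ have equal trace. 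Verifying this trace-level reduction rigorously is the main obstacle. I would handle it by writing each analytic function as a Cauchy integral of resolvents and expanding the resolvents to second order, so that every second-order term is a product of two $\epsilon$'s interleaved with scalars. Since the expansion point is the scalar $1$, the resolvents of the base point are scalars, and these terms reduce cleanly.

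\textbf{Step 3: evaluating the traces.} The linear term $\varphi(L)$ vanishes because $\varphi(\epsilon_i)=\int(x-1)\,\dd\mu_{\mathrm{MP},c_i}=0$ and the commutative gradient is zero. For the quadratic term, $\varphi(\epsilon_i^2)=c_i$, and freeness gives $\varphi(\epsilon_0\epsilon_1)=\varphi(\epsilon_0)\varphi(\epsilon_1)=0$. Therefore
\[
\varphi(Q)=\frac{\varphi(\epsilon_0^2)+\varphi(\epsilon_1^2)}{4(1+2/\tau)}=\frac{c_0+c_1}{4(1+2/\tau)}.
\]
The remainder is bounded by $(\|\epsilon_0\|+\|\epsilon_1\|)^3\lesssim(\sqrt{c_0}+\sqrt{c_1})^3\lesssim(\sqrt{c_0}+\sqrt{c_1})(c_0+c_1)$, which yields \eqref{eq:two-sample-small-c}. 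As a consistency check, setting $c_0=0$ recovers the first-order term of \eqref{eq:small-c-bias}.
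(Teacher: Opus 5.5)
Your proof is correct, but it follows a different route from the paper's.

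\textbf{The paper's route.} The paper works at finite $p$. It sets $E_i=S_i-I$ and uses the Marchenko--Pastur edge limits, with an $\eta$-slack, to put $\|E_i\|_{\rm op}$ almost surely into the small regime. It then applies the dimension-uniform Lemma~\ref{lem:uniform-local-covariance} to obtain
\[
\tfrac1p\Acal_{\rm cov,\tau}(S_0,S_1)=\tfrac{1}{4(1+2/\tau)}\tfrac1p\tr(E_0-E_1)^2+O(\delta_\eta^3).
\]
Next it lets $p\to\infty$, using $\frac1p\tr E_i^2\to c_i$, asymptotic freeness of the Wishart pair for $\frac1p\tr(E_0E_1)\to0$, and Corollary~\ref{cor:wishart-free-limit} to identify the limit with $b_\tau^{(2)}$. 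Only then does it send $\eta\downarrow0$ and $c_i\downarrow0$.

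\textbf{Your route.} You go directly to the limiting tracial $C^*$-algebra, where $b_\tau^{(2)}=\varphi(G(s_0,s_1))$ holds by the definition of $\boxtimes$, and you expand once in the Banach algebra. Because the base point $(1,1)$ is scalar, every base-point resolvent is a scalar. The second-order term is therefore a noncommutative quadratic polynomial in $\epsilon_0,\epsilon_1$, and by cyclicity its trace carries exactly the commutative Hessian coefficients. This makes the noncommutative Hessian identification essentially free. The paper instead needs dimension-free third-derivative bounds, and it identifies the matrix-level Hessian through exchange symmetry and the one-sample expansion.

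\textbf{What each buys.} Your argument dispenses with the edge limits, Wishart asymptotic freeness, the $\eta$-slack, and the order-of-limits bookkeeping. It gives directly $|b_\tau^{(2)}-\frac{c_0+c_1}{4(1+2/\tau)}|\le C_\tau(\sqrt{c_0}+\sqrt{c_1})^3$ for small $c_i$. The paper's route buys an intermediate pathwise finite-$p$ quadratic approximation of the sample action itself. That is a statement about the matrix functional, not only about its limit law.

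\textbf{A small point in Step 1.} Vanishing of the quadratic form at $(1,1)$ does not by itself force the shape $\kappa\begin{pmatrix}1&-1\\-1&1\end{pmatrix}$; for example, $e_0^2-e_1^2$ also vanishes there. To conclude that $(1,1)$ lies in the kernel of the Hessian, you need either its positive semidefiniteness or the exchange symmetry $G(x,y)=G(y,x)$. Positive semidefiniteness follows from the nonnegativity of the action, which you already invoke, so you should state this step explicitly.
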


\begin{proof}
Fix $c_0,c_1\in(0,1)$ and first take $p,n_0,n_1\to\infty$ along a sequence with $p/n_i\to c_i$.  Write $E_i=S_i-I$.  The Marchenko--Pastur edge limits give
\[
\limsup_{p\to\infty}\|E_i\|_{\rm op}
\le 2\sqrt{c_i}+c_i
\qquad\text{a.s.}
\]
For every $\eta>0$, the edge bound implies, almost surely for all sufficiently large $p$,
\[
\|E_i\|_{\rm op}\le 2\sqrt{c_i}+c_i+\eta.
\]
Set
\[
\delta_\eta(c_0,c_1)=\max_{i=0,1}\{2\sqrt{c_i}+c_i+\eta\}.
\]
For $c_0,c_1$ and $\eta$ sufficiently small, the dimension-uniform expansion in \cref{lem:uniform-local-covariance} applies and yields
\[
\frac1p\Acal_{\rm cov,\tau}(S_0,S_1)
=\frac{1}{4(1+2/\tau)}\frac1p\tr(E_0-E_1)^2
+O\!\left(\delta_\eta(c_0,c_1)^3\right),
\]
with a constant depending only on $\tau$.  The Marchenko--Pastur second centered moment gives
\[
\frac1p\tr E_i^2\longrightarrow c_i,
\]
whereas first-order asymptotic freeness and $p^{-1}\tr E_i\to0$ imply
\[
\frac1p\tr(E_0E_1)\longrightarrow0.
\]
Taking the high-dimensional limit and invoking \cref{cor:wishart-free-limit} gives
\[
b_\tau^{(2)}(c_0,c_1)
=\frac{c_0+c_1}{4(1+2/\tau)}
+O\!\left(\delta_\eta(c_0,c_1)^3\right).
\]
Letting $\eta\downarrow0$ and then $c_0,c_1\downarrow0$ yields
\[
\delta_0(c_0,c_1)^3
=O\!\left((\sqrt{c_0}+\sqrt{c_1})(c_0+c_1)\right),
\]
which proves \eqref{eq:two-sample-small-c}.  The order of limits is explicit: first $p,n_i\to\infty$ at fixed $c_i$, then $c_i\downarrow0$.
\end{proof}

Thus the leading term in \eqref{eq:two-sample-small-c} is the sum of the two one-sample quadratic contributions.

\subsection{Subordination representation}

For the explicit density, algebraic-branch, and regularity analysis in the remainder of this section we return to the nonsingular regime $c_i\in(0,1)$.  The all-aspect-ratio weak and strong spectral limits above remain valid independently of this restriction.

For numerical evaluation of the free multiplicative convolution in \cref{cor:wishart-free-limit}, we use analytic subordination.  For a probability measure $\rho$ on $[0,\infty)$, define on $\mathbb C\setminus\mathbb R_+$
\begin{equation}\label{eq:eta-transform}
\psi_\rho(z)=\int\frac{zt}{1-zt}\,\dd\rho(t),
\qquad
\eta_\rho(z)=\frac{\psi_\rho(z)}{1+\psi_\rho(z)}.
\end{equation}

For the ridge-transformed Marchenko--Pastur law, \eqref{eq:eta-transform} can be evaluated directly from the usual Marchenko--Pastur Stieltjes transform.  Let
\[
m_{\mathrm{MP},c}(z):=\int\frac{1}{x-z}\,\dd\mu_{\mathrm{MP},c}(x),
\qquad z\in\mathbb C\setminus[a_c,b_c],
\]
with the branch normalized by $m_{\mathrm{MP},c}(z)\sim-1/z$ at infinity.

\begin{lemma}[Ridge-transformed Marchenko--Pastur law]\label{lem:ridge-mp-eta}
Fix $c\in(0,1)$.  Let $r=2/\tau$ and $\rho_{c,r}=(h_r)_\#\mu_{\mathrm{MP},c}$.  With
\[
a_c=(1-\sqrt c)^2,\qquad b_c=(1+\sqrt c)^2,
\]
define
\begin{equation}\label{eq:ridge-support-endpoints}
\alpha_{c,r}:=\frac{ra_c}{1+ra_c},\qquad
\beta_{c,r}:=\frac{rb_c}{1+rb_c}.
\end{equation}
These endpoints satisfy $0<\alpha_{c,r}<\beta_{c,r}<1$, and the transformed law has density
\begin{equation}\label{eq:ridge-mp-density}
\frac{\dd\rho_{c,r}}{\dd t}(t)
=
\frac{\sqrt{(1+ra_c)(1+rb_c)}}{2\pi c r}
\frac{\sqrt{(t-\alpha_{c,r})(\beta_{c,r}-t)}}{t(1-t)^2}
\mathbf 1_{[\alpha_{c,r},\beta_{c,r}]}(t).
\end{equation}
Define also
\begin{equation}\label{eq:J-cr}
J_{c,r}(z):=
\frac{1}{r(1-z)}
 m_{\mathrm{MP},c}\!\left(-\frac{1}{r(1-z)}\right),
\end{equation}
The corresponding $\eta$-transform is
\begin{equation}\label{eq:eta-ridge-explicit}
\eta_{\rho_{c,r}}(z)
=\frac{z[1-J_{c,r}(z)]}{1-zJ_{c,r}(z)}.
\end{equation}
Its first moment is
\begin{equation}\label{eq:ridge-first-moment}
m_1(\rho_{c,r})
=1-\frac1r m_{\mathrm{MP},c}(-1/r).
\end{equation}
The formula in \eqref{eq:eta-ridge-explicit} is understood on $\mathbb C\setminus\mathbb R_+$ by analytic continuation; the apparent singularity at $z=1$ is removable wherever the defining $\eta$-transform is finite.
\end{lemma}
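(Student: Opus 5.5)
The plan is to prove every claim by direct change of variables through the ridge map $h_r(x)=rx/(1+rx)$. This map is a strictly increasing bijection from $[0,\infty)$ onto $[0,1)$, with inverse $x=t/(r(1-t))$ and Jacobian $\dd x/\dd t=1/(r(1-t)^2)$.

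\textbf{Endpoints.} Since $c\in(0,1)$, the Marchenko--Pastur law $\mu_{\mathrm{MP},c}$ has no atom at the origin and is supported on $[a_c,b_c]$ with $0<a_c<b_c$. Monotonicity of $h_r$ then gives $\operatorname{supp}\rho_{c,r}=[\alpha_{c,r},\beta_{c,r}]$ with $0<\alpha_{c,r}<\beta_{c,r}<1$.

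\textbf{Density.} Push the density \eqref{eq:mp-density} forward. The factorizations
\[
b_c-x=\frac{(1+rb_c)(\beta_{c,r}-t)}{r(1-t)},\qquad
x-a_c=\frac{(1+ra_c)(t-\alpha_{c,r})}{r(1-t)}
\]
follow by clearing denominators and using $\beta_{c,r}(1+rb_c)=rb_c$, and likewise for $\alpha_{c,r}$. They give
\[
\sqrt{(b_c-x)(x-a_c)}=\frac{\sqrt{(1+ra_c)(1+rb_c)}\,\sqrt{(t-\alpha_{c,r})(\beta_{c,r}-t)}}{r(1-t)}.
\]
Dividing by $2\pi c x=2\pi c\,t/(r(1-t))$ and multiplying by the Jacobian yields exactly \eqref{eq:ridge-mp-density}.

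\textbf{$\eta$-transform.} Write $\psi_{\rho_{c,r}}(z)=\int z h_r(x)/(1-zh_r(x))\,\dd\mu_{\mathrm{MP},c}(x)$. The integrand simplifies to
\[
\frac{zh_r(x)}{1-zh_r(x)}=\frac{zrx}{1+r(1-z)x}.
\]
Put $w=-1/(r(1-z))$, so that $1+r(1-z)x=r(1-z)(x-w)$. Then
\[
\frac{zrx}{1+r(1-z)x}=\frac{z}{1-z}\Bigl(1+\frac{w}{x-w}\Bigr).
\]
For $z\in\mathbb C\setminus\mathbb R_+$ we have $1-z\notin(-\infty,0]$, so $w\notin[a_c,b_c]$ and $m_{\mathrm{MP},c}(w)$ is defined. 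Integrating therefore gives
\[
\psi_{\rho_{c,r}}(z)=\frac{z}{1-z}\bigl(1+w\,m_{\mathrm{MP},c}(w)\bigr)=\frac{z}{1-z}\bigl(1-J_{c,r}(z)\bigr),
\]
because $J_{c,r}(z)=-w\,m_{\mathrm{MP},c}(w)$. The formula \eqref{eq:eta-ridge-explicit} then follows algebraically:
\[
\eta=\frac{\psi}{1+\psi}=\frac{z(1-J_{c,r})}{(1-z)+z(1-J_{c,r})}=\frac{z(1-J_{c,r})}{1-zJ_{c,r}}.
\]

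\textbf{First moment.} Using $\int h_r\,\dd\mu=1-\int(1+rx)^{-1}\,\dd\mu=1-r^{-1}m_{\mathrm{MP},c}(-1/r)$, we obtain \eqref{eq:ridge-first-moment}. Here $-1/r<0<a_c$, so no branch issue arises.

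\textbf{Removable singularity (main obstacle).} The only delicate point is the apparent singularity at $z=1$ and the branch bookkeeping. As $z\to1$ we have $w\to\infty$, and the normalization $m_{\mathrm{MP},c}(w)=-1/w-m_1/w^2+O(w^{-3})$ gives
\[
1-J_{c,r}(z)=1+w\,m_{\mathrm{MP},c}(w)=-\frac{m_1}{w}+O(w^{-2})=O(1-z).
\]
Hence $\psi$, and therefore $\eta$, stay bounded near $z=1$. By Riemann's theorem the singularity is removable wherever the defining transform is finite. The identity extends to all of $\mathbb C\setminus\mathbb R_+$ by the identity theorem, since both sides are analytic there and agree on the negative real axis.
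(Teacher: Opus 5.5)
Your proposal is correct and takes essentially the same route as the paper: you obtain the density by pushing forward through the inverse ridge map $x=t/[r(1-t)]$ with Jacobian $1/[r(1-t)^2]$, you obtain $\psi_{\rho_{c,r}}=\frac{z}{1-z}[1-J_{c,r}]$ from the simplification $zh_r(x)/(1-zh_r(x))=zrx/(1+r(1-z)x)$, and you get the first moment from $h_r=1-(1+rx)^{-1}$. You add details the paper leaves implicit, namely the explicit endpoint factorizations, the check that $-1/(r(1-z))$ avoids $[a_c,b_c]$, and the expansion at $z=1$; note, however, that your direct computation already holds on all of $\mathbb C\setminus\mathbb R_+$, so the closing identity-theorem step is redundant.
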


\begin{proof}
The inverse ridge map is $x=t/[r(1-t)]$ and $\dd x/\dd t=1/[r(1-t)^2]$.  Substituting this change of variables into the Marchenko--Pastur density gives \eqref{eq:ridge-support-endpoints}--\eqref{eq:ridge-mp-density}.  If $X\sim\mu_{\mathrm{MP},c}$ and $T=h_r(X)=rX/(1+rX)$, then
\[
\psi_{\rho_{c,r}}(z)
=\int\frac{zrX}{1+r(1-z)X}\,\dd\mu_{\mathrm{MP},c}(X)
=\frac{z}{1-z}[1-J_{c,r}(z)].
\]
Substitution into $\eta=\psi/(1+\psi)$ gives \eqref{eq:eta-ridge-explicit}; the first moment is $\int h_r(x)\,\dd\mu_{\mathrm{MP},c}(x)=1-r^{-1}m_{\mathrm{MP},c}(-1/r)$.
\end{proof}

Because $m_{\mathrm{MP},c}$ has the standard algebraic square-root form, \eqref{eq:eta-ridge-explicit} is explicit once the usual Stieltjes-transform branch is selected.  Numerical contour choices are discussed in Remark~\ref{rem:subordination-numerics}.

\begin{remark}\label{rem:algebraicity-priority}
Algebraic positive laws are classically closed under free multiplicative convolution; see the polynomial method of Rao and Edelman~\cite{RaoEdelman2008}.  The next proposition does not claim algebraicity of $\rho_{c,r}\boxtimes\rho_{c,r}$ as such.  What is specific to KL-UOT is the explicit low-degree polynomial obtained after eliminating the Marchenko--Pastur and subordination variables, together with the normalization that selects the physical branch entering the transport functional.
\end{remark}

\begin{proposition}[Symmetric ridge-product equation]\label{prop:ridge-product-algebraic}
Fix $c\in(0,1)$ and $r>0$, let $\rho=\rho_{c,r}$, $\lambda=\rho\boxtimes\rho$, and write
\[
Y(z):=\eta_\lambda(z),\qquad
D_{c,r}:=1+r-cr,\qquad E_{c,r}:=1+r+cr.
\]
The physical analytic branch $Y$ is characterized by the algebraic relation
\begin{equation}\label{eq:ridge-product-algebraic}
P_{c,r}(z,Y(z))=0,
\end{equation}
where
\begin{equation}\label{eq:P-cr}
P_{c,r}(z,y)
:=zy(D_{c,r}y-E_{c,r})^2
-r^2\bigl[z\{1+(c-1)y\}+cy\bigr]^2.
\end{equation}
The branch relevant to the free product is selected by
\begin{equation}\label{eq:Y-origin-normalization}
Y(z)=m_1(\rho)^2z+O(z^2),\qquad z\to0,
\end{equation}
together with the Pick normalization in Proposition~\ref{prop:subordination}.
\end{proposition}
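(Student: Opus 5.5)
The plan is to use the multiplicativity of Voiculescu's $S$-transform, written in $\eta$-transform language, and then eliminate the Marchenko--Pastur Stieltjes transform algebraically. For a compactly supported law $\mu$ on $[0,\infty)$ that is not $\delta_0$, $\eta_\mu$ is univalent near $z=0$ with $\eta_\mu(z)=m_1(\mu)z+O(z^2)$. The function $\Sigma_\mu(y):=\eta_\mu^{-1}(y)/y$ is analytic near $y=0$ and satisfies $\Sigma_{\mu\boxtimes\nu}=\Sigma_\mu\Sigma_\nu$; this is the $S$-transform up to the change of variable $\psi=\eta/(1-\eta)$. For $\lambda=\rho\boxtimes\rho$ this gives, near $y=0$,
\[
\eta_\lambda^{-1}(y)=y\,\Sigma_\rho(y)^2=\frac{\bigl(\eta_\rho^{-1}(y)\bigr)^2}{y}.
\]
Equivalently, setting $w:=\eta_\rho^{-1}(Y(z))$, the pair $(z,Y)$ satisfies $zY=w^2$ together with $\eta_\rho(w)=Y$. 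The rest of the proof turns this pair of relations into the single polynomial $P_{c,r}$.

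The first step is to invert the explicit formula of Lemma~\ref{lem:ridge-mp-eta}. From $Y=w(1-J)/(1-wJ)$ with $J=J_{c,r}(w)$, solving the linear equation in $J$ gives
\[
J=\frac{w-Y}{w(1-Y)}.
\]
Next I use the quadratic equation of the Marchenko--Pastur Stieltjes transform, $c\zeta m^2-(1-c-\zeta)m+1=0$, evaluated at $\zeta=-1/[r(1-w)]$. Since $J=-\zeta m$, this becomes a polynomial relation $G(w,J)=0$. Substituting the expression for $J$ and clearing denominators yields a polynomial relation $F_{c,r}(w,Y)=0$, which I expect to be of low degree in $w$.

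The second step is to eliminate $w$ using $w^2=zY$. I expect $F$ to reduce to a relation of the form
\[
w\,(D_{c,r}Y-E_{c,r})=\pm r\,\bigl[z\{1+(c-1)Y\}+cY\bigr],
\]
after replacing $w^2$ by $zY$ wherever it appears and dividing by nonvanishing factors such as $(1-Y)$ and $w$. Squaring and using $w^2=zY$ then produces exactly $P_{c,r}(z,Y)=0$. Since this is a polynomial identity in $z$ obtained near $z=0$, it extends to the whole domain of $Y$ by analytic continuation. A consistency check: as $z\to0$ we have $Y\sim m_1(\rho)^2z$, so the dominant terms force $E_{c,r}^2\,m_1^2=r^2(1+c\,m_1^2)^2$. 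This should agree with the value of $m_1$ given by \eqref{eq:ridge-first-moment}, evaluated through the Marchenko--Pastur transform at $-1/r$.

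For branch selection, the normalization \eqref{eq:Y-origin-normalization} follows from $\Sigma_\rho(0)=1/m_1(\rho)$, which gives $\Sigma_\lambda(0)=m_1(\rho)^{-2}$. This normalization picks a unique local solution of $P_{c,r}(z,y)=0$ at $z=0$. It is unique because $\partial_yP_{c,r}(0,0)=-r^2\cdot 2c^2\cdot 0+\dots$ degenerates only along the spurious branches, which I would check by Newton-polygon inspection at the origin. Global identification on $\mathbb C\setminus\mathbb R_+$ then follows from analytic continuation together with the Pick normalization of Proposition~\ref{prop:subordination}.

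The main obstacle is this last step. Squaring introduces spurious branches, and several of them vanish at $z=0$. I would first show, via the Newton polygon of $P_{c,r}$ at $(0,0)$, that exactly one branch is tangent to $y=m_1^2z$. Any remaining ambiguity along continuation paths would then be excluded using the mapping property that $Y=\eta_\lambda$ sends $\mathbb C^+$ into the upper half-plane (the Pick condition).
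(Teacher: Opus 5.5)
Your proposal is correct and follows essentially the same route as the paper: the $\Sigma$-transform identity gives the symmetric subordination pair $w^2=zY$, $\eta_\rho(w)=Y$. You then invert the explicit $\eta_{\rho_{c,r}}$ formula for $J$ and substitute into the Marchenko--Pastur quadratic to get $rw^2\{1+(c-1)Y\}+crY^2+wY(D_{c,r}Y-E_{c,r})=0$. Squaring the linear-in-$w$ identity $w(D_{c,r}Y-E_{c,r})=-r[z\{1+(c-1)Y\}+cY]$ gives $P_{c,r}$, and the normalization comes from $m_1(\lambda)=m_1(\rho)^2$.

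Two small corrections. First, the factors actually cancelled are $1-w$ (after multiplying through by $w^2(1-Y)^2$) and $Y$ (after substituting $w^2=zY$), not $1-Y$ and $w$. Second, $\partial_yP_{c,r}(0,0)=0$ because the origin is a node of the curve. The local uniqueness you want, which the paper does not need to prove, instead comes from the two tangent slopes there: they are the distinct roots $m_1(\rho)^2$ and $(c\,m_1(\rho))^{-2}$ of $E_{c,r}^2k=r^2(1+ck)^2$.
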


\begin{proof}
For the Marchenko--Pastur convention used here,
\begin{equation}\label{eq:mp-stieltjes-quadratic}
c\xi m^2+(\xi+c-1)m+1=0,
\qquad m=m_{\mathrm{MP},c}(\xi).
\end{equation}
Let $w$ lie near the origin away from zero and put $y=\eta_\rho(w)$.  From \eqref{eq:eta-ridge-explicit},
\[
J_{c,r}(w)=\frac{w-y}{w(1-y)},
\qquad
m_{\mathrm{MP},c}\!\left(-\frac{1}{r(1-w)}\right)
=r(1-w)J_{c,r}(w).
\]
Substitution into \eqref{eq:mp-stieltjes-quadratic} and simplification give
\begin{equation}\label{eq:rho-eta-polynomial}
r w^2\{1+(c-1)y\}+cr y^2
+w y(D_{c,r}y-E_{c,r})=0.
\end{equation}
Substitute the symmetric subordination relations $y=Y(z)=\eta_\rho(w)$ and $w^2=zy$.  On a punctured neighborhood of zero, \eqref{eq:rho-eta-polynomial} becomes
\[
w(D_{c,r}y-E_{c,r})
=-r\bigl[z\{1+(c-1)y\}+cy\bigr].
\]
Squaring and substituting $w^2=zy$ yields \eqref{eq:ridge-product-algebraic}; analyticity extends the identity to the normalized branch.  Since first moments multiply under freeness, $m_1(\lambda)=m_1(\rho)^2$, which gives \eqref{eq:Y-origin-normalization}.
\end{proof}

Equation~\eqref{eq:ridge-product-algebraic} is a necessary algebraic relation.  The elimination step also creates extraneous algebraic branches, which are excluded by the origin and Pick normalizations.

\begin{corollary}[Support and edge regularity]\label{cor:ridge-product-edge}
For $c\in(0,1)$ and $r>0$, the factor law $\rho_{c,r}$ is a compactly supported Jacobi measure on $(0,1)$ with exponent $1/2$ at both endpoints.  Its free multiplicative square has a single support interval: there exist $0<E_-(c,r)<E_+(c,r)<1$ such that
\begin{equation}\label{eq:product-support-interval}
\operatorname{supp}(\rho_{c,r}\boxtimes\rho_{c,r})=[E_-(c,r),E_+(c,r)],
\end{equation}
and the product density has square-root decay at both $E_-(c,r)$ and $E_+(c,r)$.
\end{corollary}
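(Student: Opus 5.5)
The first assertion is read off \cref{lem:ridge-mp-eta}. The density \eqref{eq:ridge-mp-density} has the form $\sqrt{(t-\alpha_{c,r})(\beta_{c,r}-t)}\,g(t)$ on $[\alpha_{c,r},\beta_{c,r}]\subset(0,1)$. Here $g(t)=\mathrm{const}\cdot t^{-1}(1-t)^{-2}$ is real-analytic and strictly positive on a neighbourhood of the support. So $\rho_{c,r}$ is a Jacobi-type measure with exponent $1/2$ at both ends, it has no atoms, and it has a connected support bounded away from $0$ and $1$. For the product law I would use the general regularity theory of free multiplicative convolution as an external input. In the form of Bercovici--Voiculescu subordination, together with the regularity results for $\boxtimes$ of Jacobi-type measures on $(0,\infty)$ (Ji, Bao--Erd\H{o}s--Schnelli type results), this theory gives the following. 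If both factors are supported on single intervals in $(0,\infty)$, have power-law exponents in $(-1,1)$ at the edges, and have positive analytic density factors, then $\rho\boxtimes\rho$ is supported on a single interval $[E_-,E_+]$, has a real-analytic positive density in its interior, and has square-root decay at both edges. Our factor satisfies exactly these hypotheses, since the exponent $1/2$ lies in $(-1,1)$.

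The bound $0<E_-<E_+<1$ needs separate support control. The law $\lambda=\rho\boxtimes\rho$ is the law of $a^{1/2}ba^{1/2}$ for free $a,b$ with spectra in $[\alpha_{c,r},\beta_{c,r}]$. Operator monotonicity then gives $\alpha_{c,r}^2\le a^{1/2}ba^{1/2}\le\beta_{c,r}^2$, so $\operatorname{supp}\lambda\subset[\alpha_{c,r}^2,\beta_{c,r}^2]\subset(0,1)$. Strict inequality $E_-<E_+$ holds because $\lambda$ is not a point mass. Its variance equals $\operatorname{Var}(a)\,\varphi(b^2)+\varphi(a)^2\operatorname{Var}(b)>0$, since $\rho$ is nondegenerate.

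To make the statement self-contained and tied to \cref{prop:ridge-product-algebraic}, I would also identify the edges algebraically. The physical branch $Y=\eta_\lambda$ satisfies $P_{c,r}(z,Y)=0$. On the real axis outside the support, $Y$ is real-analytic and monotone. The edges in the variable $x=1/z$ correspond to real points where the implicit function theorem fails, that is, to common real zeros of $P_{c,r}$ and $\partial_yP_{c,r}$. At such a point $\partial_zP_{c,r}\ne0$ and $\partial_y^2P_{c,r}\ne0$ generically. Then the local expansion $Y-Y_0\sim C\sqrt{z-z_0}$ holds, and through $\psi_\lambda=Y/(1-Y)$ and the Stieltjes inversion this yields square-root vanishing of the density. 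The step I expect to be hardest is certifying that exactly two such branch points lie on the physical sheet, and that they are nondegenerate, i.e.\ that no cusp occurs. The elimination in \cref{prop:ridge-product-algebraic} produces extraneous branches, and a discriminant computation in $y$ could have additional real roots. My first approach is to rely on the general single-interval/square-root theorem quoted above and use the algebraic equation only to locate $E_\pm$. The fallback is to follow the physical branch by continuity from the normalization \eqref{eq:Y-origin-normalization} along the real axis until the first branch point on each side. Nondegeneracy at these points would then be checked via subordination: the subordination function $\omega$ has a simple critical point of $z\mapsto\omega(z)^2/\eta_\rho(\omega(z))$ there, because $\eta_\rho'$ is nonzero where $\rho$ has exponent $1/2$ edges.
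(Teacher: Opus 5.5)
Your proposal is correct and takes essentially the paper's route. The Jacobi form with exponent $1/2$ at both endpoints is read off \eqref{eq:ridge-mp-density}, Ji's single-interval/square-root-edge theorem for $\boxtimes$ of compact Jacobi measures gives the product statement, and the operator sandwich $\alpha_{c,r}^2\le a^{1/2}ba^{1/2}\le\beta_{c,r}^2$ gives $0<E_-<E_+<1$, so your algebraic branch-point analysis is not needed for this corollary.

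Two small fixes are worth making. First, Ji's theorem is stated for mean-one laws, so the paper first dilates $\rho_{c,r}$ by $x\mapsto x/m_1(\rho_{c,r})$ and then transfers back via $(D_a\mu)\boxtimes(D_b\nu)=D_{ab}(\mu\boxtimes\nu)$, where $D_s$ denotes push-forward under $x\mapsto sx$. Second, for free $a,b$ the variance of $a^{1/2}ba^{1/2}$ is $\operatorname{Var}(a)\varphi(b)^2+\varphi(a)^2\operatorname{Var}(b)$, not the classical-independence formula with $\varphi(b^2)$. Positivity, and hence $E_-<E_+$, is unaffected.
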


\begin{proof}
By \eqref{eq:ridge-mp-density}, the density of $\rho_{c,r}$ can be written on $[\alpha_{c,r},\beta_{c,r}]\Subset(0,1)$ as
\[
\frac{\dd\rho_{c,r}}{\dd t}(t)
=w_{c,r}(t)(t-\alpha_{c,r})^{1/2}(\beta_{c,r}-t)^{1/2},
\]
where $w_{c,r}$ is smooth and bounded above and below by strictly positive constants on the support.  Hence the two Jacobi endpoint exponents are
\[
t_-=t_+=\frac12\in(-1,1),
\]
the support is compact and separated from the origin, and the density is strictly positive in its interior.  These are precisely the compact-Jacobi hypotheses needed below.  To match the mean-one normalization in Ji's published theorem, let $m=m_1(\rho_{c,r})>0$ and write $D_s\mu$ for the push-forward of $\mu$ under $x\mapsto sx$.  Then $\widetilde\rho:=D_{1/m}\rho_{c,r}$ has mean one; positive dilation preserves compactness, interior positivity, and the endpoint exponents $1/2$.  Ji's published Jacobi theorem~\cite[Theorem~3.3]{Ji2021} therefore applies and gives a single compact support interval and square-root edge decay for $\widetilde\rho\boxtimes\widetilde\rho$.  Since positive dilations satisfy
\[
(D_a\mu)\boxtimes(D_b\nu)=D_{ab}(\mu\boxtimes\nu),
\]
we have $\widetilde\rho\boxtimes\widetilde\rho=D_{1/m^2}(\rho_{c,r}\boxtimes\rho_{c,r})$.  Dilating back preserves the Jacobi exponents and proves \eqref{eq:product-support-interval}.  More explicitly, if $U,V$ are free positive variables with law $\rho_{c,r}$, then
\[
\alpha_{c,r}^2 I\preceq V^{1/2}UV^{1/2}\preceq\beta_{c,r}^2 I,
\]
because $\alpha_{c,r}I\preceq U,V\preceq\beta_{c,r}I$.  This yields
\[
\alpha_{c,r}^2\le E_-(c,r)<E_+(c,r)\le\beta_{c,r}^2<1,
\]
which makes the support separation from both $0$ and $1$ explicit.
\end{proof}

\begin{remark}\label{rem:algebraic-edges}
Equation~\eqref{eq:ridge-product-algebraic} converts the symmetric subordination problem into a single algebraic curve.  In particular, finite positive boundary branch points of the algebraic continuation of the normalized physical solution $Y$, away from removable singularities, are contained in the discriminant set
\[
P_{c,r}(z,y)=0,
\qquad
\partial_yP_{c,r}(z,y)=0.
\]
Since $G_\lambda(\zeta)=\{\zeta[1-Y(1/\zeta)]\}^{-1}$, reciprocals of those physical boundary branch points provide candidate support edges.  Corollary~\ref{cor:ridge-product-edge} supplies the square-root edge behavior; in general we do not claim a closed-form ordering of all algebraic roots, and numerical edge selection must retain the normalized Pick branch.

For the symmetric parameter choice used in Figure~\ref{fig:two-sample-density-support}, $(c,r)=(1/2,1)$, the discriminant can be identified completely.  Direct calculation gives
\begin{equation}\label{eq:figure2-discriminant}
\operatorname{Disc}_y P_{1/2,1}(z,y)
=\frac{z^2(z+5)^2}{256}
\bigl(8z^3-431z^2+934z+17\bigr).
\end{equation}
Let $q(z)=8z^3-431z^2+934z+17$.  Since $q(-1)<0<q(0)$, $q(0)>0>q(3)$, and $q(50)<0<q(60)$, the cubic has one real root in each of $(-1,0)$, $(0,3)$, and $(50,60)$; these exhaust its three roots.  The discriminant has exactly two finite positive nonzero candidate branch points,
\[
z_-\simeq2.2809134166,
\qquad
z_+\simeq51.6121374596.
\]
Corollary~\ref{cor:ridge-product-edge} also shows that the physical product law has a single compact support interval with two square-root edges.  At each edge the Cauchy transform, and hence $Y$ through \eqref{eq:cauchy-from-eta}, has the corresponding square-root boundary branch point at the reciprocal spectral coordinate.  The normalized physical branch must therefore have exactly two finite positive boundary branch points.  Since \eqref{eq:figure2-discriminant} has no other positive nonzero candidates, they must be $z_-$ and $z_+$, and the Cauchy--$\eta$ relation gives
\begin{equation}\label{eq:figure2-support-edges}
E_-=z_+^{-1}\simeq0.0193752875,
\qquad
E_+=z_-^{-1}\simeq0.4384208505.
\end{equation}
\end{remark}

The subordination statement below is the specialization of Biane's multiplicative subordination theorem on $\mathbb R_+$~\cite[Theorem~3.6]{Biane1998}, together with the equivalent two-sided $\eta$-transform formulation and analytic continuation for compactly supported laws~\cite{BercoviciVoiculescu1993,BelinschiBercovici2007}.

\begin{proposition}[Ridge-product subordination]\label{prop:subordination}
Let $\mu=\rho_{c_0,r}$, $\nu=\rho_{c_1,r}$ and $\lambda=\mu\boxtimes\nu$, where $c_i\in(0,1)$ and $r>0$.  Then there exists a unique normalized pair $\omega_0,\omega_1:\mathbb C\setminus\mathbb R_+\to\mathbb C\setminus\mathbb R_+$ such that, for $i=0,1$,
\[
\omega_i(\bar z)=\overline{\omega_i(z)},\qquad
\omega_i(\mathbb C^+)\subset\mathbb C^+,\qquad
\arg\omega_i(z)\ge\arg z\quad(z\in\mathbb C^+),
\]
where $\arg$ is the principal argument on $\mathbb C^+$.  Each $\omega_i$ extends analytically to a neighborhood of $0$, with $\omega_i(0)=0$ and
\[
\omega_0'(0)=m_1(\nu),\qquad
\omega_1'(0)=m_1(\mu).
\]
Equivalently, $\omega_0(z)/z\to m_1(\nu)$ and $\omega_1(z)/z\to m_1(\mu)$ as $z\to0$.  The pair satisfies
\begin{equation}\label{eq:subordination-identities}
\eta_\lambda(z)=\eta_\mu(\omega_0(z))
=\eta_\nu(\omega_1(z)),
\qquad
\omega_0(z)\omega_1(z)=z\eta_\lambda(z).
\end{equation}
If $G_\lambda(\zeta)=\int(\zeta-t)^{-1}\,\dd\lambda(t)$, then
\begin{equation}\label{eq:cauchy-from-eta}
G_\lambda(\zeta)
=\frac{1}{\zeta\,[1-\eta_\lambda(1/\zeta)]},
\qquad \zeta\in\mathbb C^+.
\end{equation}
Moreover, $\lambda$ is purely absolutely continuous, and its density is recovered almost everywhere by Stieltjes inversion.  In the symmetric case $c_0=c_1=c$, uniqueness gives $\omega_0=\omega_1=\omega$ and the system reduces to the scalar equation
\begin{equation}\label{eq:symmetric-subordination}
\omega(z)^2=z\,\eta_{\rho_{c,r}}(\omega(z)),
\qquad
\eta_{\rho_{c,r}\boxtimes\rho_{c,r}}(z)
=\eta_{\rho_{c,r}}(\omega(z)).
\end{equation}
The relevant solution is the analytic branch satisfying
\begin{equation}\label{eq:subordination-branch}
\omega(z)=m_1(\rho_{c,r})z+O(z^2),\qquad z\to0.
\end{equation}
\end{proposition}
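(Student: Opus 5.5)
The plan is to obtain Proposition~\ref{prop:subordination} by specializing Biane's multiplicative subordination theorem on $\mathbb R_+$~\cite[Theorem~3.6]{Biane1998} to the compactly supported laws $\mu=\rho_{c_0,r}$ and $\nu=\rho_{c_1,r}$. By Lemma~\ref{lem:ridge-mp-eta}, both are absolutely continuous and supported in $[\alpha_{c_i,r},\beta_{c_i,r}]\Subset(0,1)$, and neither is a point mass.

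\textbf{Step 1: existence of the subordination functions.} Biane's theorem supplies analytic maps $\omega_0,\omega_1$ on $\mathbb C\setminus\mathbb R_+$ with $\eta_\lambda=\eta_\mu\circ\omega_0=\eta_\nu\circ\omega_1$. I will translate this into the two-sided $\eta$-formulation of Belinschi--Bercovici~\cite{BelinschiBercovici2007}. That formulation gives the product identity $\omega_0(z)\omega_1(z)=z\eta_\lambda(z)$, the conjugation symmetry $\omega_i(\bar z)=\overline{\omega_i(z)}$, the inclusion $\omega_i(\mathbb C^+)\subset\mathbb C^+$, and the argument inequality $\arg\omega_i(z)\ge\arg z$. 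Uniqueness in this normalized class is part of the same Denjoy--Wolff-type characterization, since $\omega_0$ is the fixed point of an analytic self-map that is not an automorphism.

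\textbf{Step 2: behaviour at the origin.} Compact support makes $\psi_\rho(z)=\sum_{k\ge1}m_k(\rho)z^k$ analytic near $0$. Hence $\eta_\rho(z)=m_1(\rho)z+O(z^2)$ is analytic near $0$ and locally invertible there, because $m_1>0$. From $\eta_\lambda=\eta_\mu\circ\omega_0$ I get $\omega_0=\eta_\mu^{-1}\circ\eta_\lambda$ near $0$, which is analytic with $\omega_0(0)=0$. Since $m_1(\lambda)=m_1(\mu)m_1(\nu)$, this gives $\omega_0'(0)=m_1(\nu)$, and symmetrically $\omega_1'(0)=m_1(\mu)$. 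These local inverses agree with Biane's maps on the part of $\mathbb C^-$ near $0$ by the identity theorem.

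\textbf{Step 3: Cauchy transform and absolute continuity.} Formula \eqref{eq:cauchy-from-eta} is an algebraic identity: $\psi_\lambda(1/\zeta)=\zeta G_\lambda(\zeta)-1$, so $1-\eta_\lambda=1/(1+\psi_\lambda)$. Absolute continuity of $\lambda$ is the step I expect to be the main obstacle. Belinschi's result for $\boxtimes$ on $\mathbb R_+$ excludes singular continuous parts when neither factor is a point mass, and it restricts atoms to points $a$ with $\mu(\{a_1\})+\nu(\{a_2\})>1$. Neither factor has atoms, so $\lambda$ has no atoms. Alternatively, Corollary~\ref{cor:ridge-product-edge}, via Ji's theorem, already gives absolute continuity in the symmetric case; in general I would cite Belinschi. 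Stieltjes inversion then recovers the density.

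\textbf{Step 4: the symmetric case.} When $c_0=c_1$, the pair $(\omega_1,\omega_0)$ is also a normalized solution, so uniqueness forces $\omega_0=\omega_1=\omega$. Then $\omega^2=z\eta_\lambda(z)=z\eta_\rho(\omega)$, which is \eqref{eq:symmetric-subordination}, and \eqref{eq:subordination-branch} is Step 2 with $m_1(\mu)=m_1(\nu)$.
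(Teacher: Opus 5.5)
Your proposal is correct and follows essentially the paper's route: Biane's (equivalently Belinschi--Bercovici's) normalized subordination pair, local inversion of the $\eta$-transforms at the origin using $m_1(\lambda)=m_1(\mu)m_1(\nu)$, the algebraic Cauchy--$\eta$ identity, Belinschi's atom criterion, and uniqueness under swapping the factors in the symmetric case. One adjustment is needed in Step~3: cite the vanishing of the singular continuous part for $\boxtimes$ on $\mathbb R_+$ to Ji~\cite[Theorem~3.2]{Ji2021}, which the paper applies for all $c_0,c_1\in(0,1)$; the Belinschi reference~\cite{BelinschiAtoms2003} treats only atoms, and with Ji's theorem you do not need to restrict that route to the symmetric case.
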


\begin{proof}
The assumptions on $\mu$ and $\nu$ place them in the nondegenerate setting of probability measures on $\mathbb R_+$ in Biane's multiplicative subordination theorem~\cite[Theorem~3.6]{Biane1998}.  Applying that result with the two factors interchanged gives the unique one-sided maps $\omega_0$ and $\omega_1$ with the displayed half-plane and argument conditions; Schwarz reflection gives the conjugate continuation to the slit plane.  Near the origin, all three $\eta$-transforms are analytic and have nonzero linear terms.  Local inversion gives $\omega_0=\eta_\mu^{-1}\circ\eta_\lambda$ and $\omega_1=\eta_\nu^{-1}\circ\eta_\lambda$; using $m_1(\lambda)=m_1(\mu)m_1(\nu)$ then gives $\omega_0(z)/z\to m_1(\nu)$ and $\omega_1(z)/z\to m_1(\mu)$.  The standard multiplicative $\Sigma$-transform identity~\cite{BercoviciVoiculescu1993,BelinschiBercovici2007}, with $\Sigma_\rho(y)=\eta_\rho^{-1}(y)/y$, gives $\omega_0(z)\omega_1(z)=z\eta_\lambda(z)$ near zero; analytic continuation yields \eqref{eq:subordination-identities} on the slit domain.  To prove absolute continuity, note that the Marchenko--Pastur factors are atomless on their positive supports and the strictly increasing ridge map preserves atomlessness, so $\mu$ and $\nu$ have no positive atoms and satisfy $\mu(\{0\})=\nu(\{0\})=0$.  Belinschi's multiplicative positive-atom criterion~\cite{BelinschiAtoms2003} rules out atoms of $\lambda$ on $(0,\infty)$, while the zero-atom formula gives
\[
\lambda(\{0\})=\max\{\mu(\{0\}),\nu(\{0\})\}=0.
\]
Ji's regularity theorem on $\mathbb R_+$~\cite[Theorem~3.2]{Ji2021} gives $\lambda_{\rm sc}=0$.  Hence $\lambda=\lambda_{\rm ac}$, which justifies the Stieltjes-inversion statement above.  The relation \eqref{eq:cauchy-from-eta} follows from
$1+\psi_\lambda(z)=\int(1-zt)^{-1}\,\dd\lambda(t)$ after setting $z=1/\zeta$.  When $\mu=\nu$, symmetry and uniqueness of the normalized subordination pair imply $\omega_0=\omega_1$, and the coupling identity in \eqref{eq:subordination-identities} gives \eqref{eq:symmetric-subordination}.
\end{proof}

The Stieltjes-inversion formula reduces the final integral in \eqref{eq:free-bias} to one-dimensional quadrature.

\begin{remark}[Numerical implementation]\label{rem:subordination-numerics}
By \cref{lem:ridge-mp-eta}, $\eta_{\rho_{c,r}}$ is evaluated directly from the Marchenko--Pastur Stieltjes transform, without an inner spectral quadrature.  To remove the algebraic zero root of the raw symmetric equation $\omega^2=z\eta(\omega)$, the implementation uses the analytic quotient
\[
q_\rho(w):=\begin{cases}\eta_\rho(w)/w,&w\ne0,\\ m_1(\rho),&w=0,\end{cases}
\]
and solves the desingularized normalized equation $\omega=zq_\rho(\omega)$.  For asymmetric aspect ratios, we solve the analogous system $\omega_0=zq_\nu(\omega_1)$ and $\omega_1=zq_\mu(\omega_0)$.  Continuation begins at a small point on the ray from $0$ to the first target $z$, initialized by \eqref{eq:subordination-branch}.  Each subsequent spectral point uses the preceding branch value as the corrector initial value.  The implementation also checks the sign of the imaginary part required by the Pick condition, records residuals for both the desingularized and original identities, and verifies the prescribed first-derivative normalization at small $z$.  The reconstructed density is then obtained from \eqref{eq:cauchy-from-eta} by Stieltjes inversion.  The $\varepsilon\downarrow0$ boundary extrapolation used in Section~\ref{sec:numerics} remains a numerical device rather than a proved error bound.  For comparison, contour-integral algorithms for free convolution with quantitative spectral discretization are developed in~\cite{CortinovisYing2025}.
\end{remark}

\begin{remark}[Marginal spectral non-identifiability]\label{rem:nonidentifiability}
At fixed $\tau$, the marginal eigenvalue distributions of $R_0$ and $R_1$ do not determine the general two-population action.  For example, take
\[
R_0=\diag(x,y),\qquad
R_1^{(A)}=\diag(x,y),\qquad
R_1^{(B)}=\diag(y,x),
\]
with $0<x\ne y<1$.  Both choices of $R_1$ have the same spectrum, but
\[
\spec(R_1^{(A)}R_0)=\{x^2,y^2\},\qquad
\spec(R_1^{(B)}R_0)=\{xy,xy\}.
\]
The joint log term is respectively
\[
\log(1-x)+\log(1-y)
\quad\text{and}\quad
2\log(1-\sqrt{xy}),
\]
and these differ because
\[
(1-\sqrt{xy})^2-(1-x)(1-y)=(\sqrt x-\sqrt y)^2>0.
\]
A general fixed-$\tau$ estimator requires the joint product spectrum, information about the relative eigenvectors, or an asymptotic-freeness model.
\end{remark}

\section{Penalty-scaling regimes}\label{sec:phase}

Allowing the marginal penalties to vary with dimension exposes a second asymptotic scale.  Through \eqref{eq:optimal-mass}, the optimal mass determines when the raw KL-UOT value leaves its fixed-penalty saturation regime.

Let the dimension be $p$, let $a_p,b_p>0$ denote the two input masses, and allow $\tau_{i,p}$ to depend on $p$.  Write $\Acal_{*,p}$ for the corresponding optimal Gaussian shape action and set
\[
T_p=\tau_{0,p}+\tau_{1,p},\qquad
G_p=a_p^{\tau_{0,p}/T_p}b_p^{\tau_{1,p}/T_p}.
\]

The relative growth rates of the action and the total penalty determine the limiting transported mass.  Suppose that
\[
\Acal_{*,p}=K p^\beta(1+o(1)),\qquad
T_p=\bar T p^\alpha(1+o(1)),
\]
for constants $K,\bar T>0$ and exponents $\alpha,\beta\in\mathbb R$, and assume $G_p\to G\in(0,\infty)$.  The mass formula \eqref{eq:optimal-mass} gives
\begin{equation}\label{eq:phase-mass}
\frac{M_{*,p}}{G_p}
=\exp\left\{-\frac{K}{\bar T}p^{\beta-\alpha}(1+o(1))\right\}.
\end{equation}
$M_{*,p}/G_p$ tends to $0$, $e^{-K/\bar T}$, or $1$ according as $\beta>\alpha$, $\beta=\alpha$, or $\beta<\alpha$.

For equal unit masses and equal penalties, $G_p=1$, and the value reduces to
\begin{equation}\label{eq:equal-uot-exp}
\Ucal_{\tau_p,\tau_p}=2\tau_p\left(1-e^{-\Acal_{*,p}/(2\tau_p)}\right).
\end{equation}
\subsection{Critical scaling and balanced transport}

The following expansion quantifies convergence to the balanced Gaussian cost as the common penalty grows.  For equal penalties, let $\Acal_{*,p}(\tau)$ denote the optimal Gaussian shape action in dimension $p$; along a sequence $\tau_p$, write simply $\Acal_{*,p}=\Acal_{*,p}(\tau_p)$.

\begin{assumption}[Spectral boundedness]\label{ass:spectral-bounds}
There exist constants $0<\underline\lambda<\overline\lambda<\infty$ such that, for all $p$,
\[
\underline\lambda I\preceq\Sigma_{i,p}\preceq\overline\lambda I,
\qquad i=0,1.
\]
\end{assumption}

\begin{lemma}[Large-penalty expansion]\label{lem:large-tau}
Under Assumption~\ref{ass:spectral-bounds}, for equal penalties $\tau$, the covariance action satisfies, uniformly over $p$ and all admissible covariance pairs as $\tau\to\infty$,
\begin{equation}\label{eq:large-tau-cov}
\Acal_{\rm cov,\tau}(\Sigma_{0,p},\Sigma_{1,p})
=d_B^2(\Sigma_{0,p},\Sigma_{1,p})+O(p/\tau).
\end{equation}
If the Gaussian means are not necessarily zero but additionally $\|m_{0,p}-m_{1,p}\|^2=O(p)$ uniformly in $p$, then the full action satisfies
\begin{equation}\label{eq:large-tau-full}
\Acal_{*,p}(\tau)
=W_2^2(\N(m_{0,p},\Sigma_{0,p}),\N(m_{1,p},\Sigma_{1,p}))+O(p/\tau).
\end{equation}
The constants in the remainders depend only on $\underline\lambda,\overline\lambda$ and, in the second statement, on the implicit constant in $\|m_{0,p}-m_{1,p}\|^2=O(p)$.
\end{lemma}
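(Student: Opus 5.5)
The plan is to work entirely from the equal-penalty ridge-product identity of Corollary~\ref{cor:ridge-product}, expand each of its three log-determinants in the small parameter $r=2/\tau$, and control every remainder by operator-norm bounds that depend only on $\underline\lambda,\overline\lambda$. Traces of bounded matrices are at most $p$ times their operator norms, so every error is automatically $O(p\,\cdot)$. Throughout I restrict to $\tau\ge\tau_0(\overline\lambda)$, so that $r\overline\lambda\le 1/2$ and all scalar power series below converge uniformly.

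\textbf{Marginal terms.} Since $I-R_i=(I+r\Sigma_i)^{-1}$, the marginal terms are
\[
-\tfrac{\tau}{2}\log\det(I-R_i)=\tfrac{\tau}{2}\tr\log(I+r\Sigma_i).
\]
The scalar bound $|\log(1+x)-x|\le x^2$ for $0\le x\le 1/2$ then gives $\tr\Sigma_i+O(\tau r^2p\overline\lambda^2)=\tr\Sigma_i+O(p/\tau)$.

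\textbf{Joint term.} Write $R_i=r\widetilde\Sigma_i$ with $\widetilde\Sigma_i:=\Sigma_i(I+r\Sigma_i)^{-1}$. Then $\mathcal R^{1/2}=rN_r$, where
\[
N_r:=M_r^{1/2},\qquad M_r:=\widetilde\Sigma_1^{1/2}\widetilde\Sigma_0\widetilde\Sigma_1^{1/2}.
\]
The matrices $\widetilde\Sigma_i$ satisfy $\|\widetilde\Sigma_i-\Sigma_i\|_{\rm op}\le r\overline\lambda^2$ and are bounded below by $\underline\lambda/(1+r\overline\lambda)\ge 2\underline\lambda/3$. Because square roots are Lipschitz on matrices bounded below, via
\[
\|A^{1/2}-B^{1/2}\|_{\rm op}\le \frac{\|A-B\|_{\rm op}}{2\sqrt{m}}\qquad\text{for }A,B\succeq mI,
\]
which one proves from the Sylvester equation $A^{1/2}X+XB^{1/2}=A-B$, I get $\|\widetilde\Sigma_1^{1/2}-\Sigma_1^{1/2}\|_{\rm op}=O(r)$. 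This in turn gives $\|M_r-M_0\|_{\rm op}=O(r)$ with $M_0=\Sigma_1^{1/2}\Sigma_0\Sigma_1^{1/2}$. Since $M_r,M_0\succeq c(\underline\lambda)I$, a second application gives $\|N_r-M_0^{1/2}\|_{\rm op}=O(r)$. Now $\|rN_r\|_{\rm op}\le r\overline\lambda\le1/2$, so
\[
\tau\log\det(I-rN_r)=-\tau r\tr N_r+O(\tau r^2p)=-2\tr M_0^{1/2}+O(p/\tau).
\]
Summing the three terms yields $\tr\Sigma_0+\tr\Sigma_1-2\tr(\Sigma_1^{1/2}\Sigma_0\Sigma_1^{1/2})^{1/2}+O(p/\tau)$, which is $d_B^2$ by \eqref{eq:gaussian-w2}. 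This proves \eqref{eq:large-tau-cov}, with constants depending only on $\underline\lambda,\overline\lambda$.

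\textbf{Full action.} By Theorem~\ref{thm:exact-spectral}, $\Acal_*=\Acal_{\rm mean}+\Acal_{\rm cov}$. For the mean part, \eqref{eq:mean-action} with $r_0=r_1=r$ gives $(I+r\Sigma_0+r\Sigma_1)^{-1}=I+E$, where $\|E\|_{\rm op}\le 2r\overline\lambda$. Hence $\Acal_{\rm mean}=\|\delta\|^2+O(r\|\delta\|^2)=\|\delta\|^2+O(p/\tau)$ under the assumption $\|\delta\|^2=O(p)$. Adding this to \eqref{eq:large-tau-cov} and comparing with \eqref{eq:gaussian-w2} gives \eqref{eq:large-tau-full}.

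\textbf{Main obstacle.} The one genuinely delicate step is the uniform-in-$p$ control of the noncommutative square root $N_r$. A naive eigenvalue perturbation argument would not be dimension-free, whereas the operator-Lipschitz bound for square roots on $\{A\succeq mI\}$ is. This is where the lower bound $\underline\lambda$ in Assumption~\ref{ass:spectral-bounds} is essential. I would check that the Sylvester-equation bound holds with the stated constant, using that the map $X\mapsto A^{1/2}X+XB^{1/2}$ has inverse of norm at most $1/(2\sqrt m)$.
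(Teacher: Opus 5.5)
Your proposal is correct and follows essentially the same route as the paper. Both arguments expand the three log-determinants of the ridge-product identity \eqref{eq:ridge-formula} to first order in $r=2/\tau$ with an $O(pr^2)$ remainder. Both use $R_i=r\Sigma_i+O_{\op}(r^2)$ and $\mathcal R^{1/2}=r(\Sigma_1^{1/2}\Sigma_0\Sigma_1^{1/2})^{1/2}+O_{\op}(r^2)$, as in Lemma~\ref{lem:large-tau-technical}, and both bound the mean resolvent deviation by $2\overline\lambda r$.

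Your Sylvester-equation bound $\|A^{1/2}-B^{1/2}\|_{\op}\le\|A-B\|_{\op}/(2\sqrt m)$ is valid in operator norm, via $X=\int_0^\infty e^{-tA^{1/2}}(A-B)e^{-tB^{1/2}}\,\dd t$. It makes explicit the dimension-free operator-Lipschitz property of the square root, which the paper only asserts.
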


\begin{proof}
Suppress the dimension index $p$ in the covariance and mean notation, and set $r=2/\tau$.  The uniform ridge and matrix-square-root expansions in \cref{lem:large-tau-technical} give
\[
R_i=r\Sigma_i+O_{\op}(r^2),\qquad
\mathcal R^{1/2}=r(\Sigma_1^{1/2}\Sigma_0\Sigma_1^{1/2})^{1/2}+O_{\op}(r^2),
\]
uniformly under Assumption~\ref{ass:spectral-bounds}.  Substituting these expansions into \eqref{eq:ridge-formula} requires a uniform remainder bound for the log determinant.  If $X=X^\top$ and $\|X\|_{\op}\le q<1$, then the scalar power series gives
\begin{equation}\label{eq:logdet-uniform-bound}
\left|\log\det(I-X)+\tr X\right|
\le \frac{p\|X\|_{\op}^2}{2(1-q)}.
\end{equation}
Under Assumption~\ref{ass:spectral-bounds}, the three matrix arguments entering \eqref{eq:ridge-formula} have operator norm $O(r)$ uniformly.  Writing
\[
G=(\Sigma_1^{1/2}\Sigma_0\Sigma_1^{1/2})^{1/2},
\]
\eqref{eq:logdet-uniform-bound} and \cref{lem:large-tau-technical} give
\begin{align*}
\Acal_{\rm cov,\tau}
&=\frac{\tau}{2}\tr R_0+\frac{\tau}{2}\tr R_1
  -\tau\tr\mathcal R^{1/2}+O(\tau p r^2)\\
&=\tr\Sigma_0+\tr\Sigma_1-2\tr G+O(p/\tau)\\
&=d_B^2(\Sigma_0,\Sigma_1)+O(p/\tau),
\end{align*}
where $r=2/\tau$.  Thus the Bures--Wasserstein trace combination appears explicitly as the leading term.
For the mean term, put $d=m_0-m_1$.  Under equal penalties, \eqref{eq:mean-action} gives
\[
\Acal_{\rm mean}
=d^\top[I+r(\Sigma_0+\Sigma_1)]^{-1}d.
\]
The resolvent identity and Assumption~\ref{ass:spectral-bounds} imply, uniformly on the same spectral class,
\[
[I+r(\Sigma_0+\Sigma_1)]^{-1}-I
=-r[I+r(\Sigma_0+\Sigma_1)]^{-1}(\Sigma_0+\Sigma_1),
\]
and the same identity yields
\[
\bigl\|[I+r(\Sigma_0+\Sigma_1)]^{-1}-I\bigr\|_{\op}
\le 2\overline\lambda r.
\]
This gives
\[
\left|\Acal_{\rm mean}-\|d\|^2\right|
\le 2\overline\lambda r\|d\|^2
=O(p/\tau)
\]
whenever $\|d\|^2=O(p)$.
\end{proof}

\begin{corollary}[Dense-discrepancy phase diagram]\label{cor:gaussian-phase}
Assume the conditions of Assumption~\ref{ass:spectral-bounds} and equal unit masses.  Let
\[
\mu_{i,p}=\N(m_{i,p},\Sigma_{i,p}),\qquad i=0,1,
\]
and suppose
\[
\tau_p=\bar\tau p^\alpha,
\qquad \bar\tau>0,\quad \alpha>0,
\]
with
\[
w_p:=\frac1pW_2^2(\mu_{0,p},\mu_{1,p})\to w>0.
\]
Then the normalized action converges to the balanced discrepancy,
\[
\frac{\Acal_{*,p}}p\to w
\]
and the optimal transported mass satisfies
\begin{equation}\label{eq:gaussian-phase}
M_{*,p}\longrightarrow
\begin{cases}
0,&0<\alpha<1,\\[0.5ex]
\exp\{-w/(2\bar\tau)\},&\alpha=1,\\[0.5ex]
1,&\alpha>1.
\end{cases}
\end{equation}
\end{corollary}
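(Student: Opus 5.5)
The plan is to combine Lemma~\ref{lem:large-tau} with the mass formula \eqref{eq:optimal-mass} for equal unit masses, for which $G_p=1$.

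First, apply \eqref{eq:large-tau-full} along $\tau=\tau_p=\bar\tau p^\alpha\to\infty$; this is legitimate because $\alpha>0$. The lemma needs $\|m_{0,p}-m_{1,p}\|^2=O(p)$. This follows from the hypothesis $w_p\to w$: by \eqref{eq:gaussian-w2}, the mean term is bounded by $W_2^2(\mu_{0,p},\mu_{1,p})=pw_p=O(p)$, since the covariance (Bures) part is nonnegative. Lemma~\ref{lem:large-tau} then gives
\[
\Acal_{*,p}=pw_p+O(p/\tau_p)=pw_p+O(p^{1-\alpha}).
\]
Dividing by $p$ yields $\Acal_{*,p}/p=w_p+O(p^{-\alpha})\to w$. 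The constants in the remainder are uniform, because they depend only on $\underline\lambda,\overline\lambda$ and the $O(p)$ constant of the mean discrepancy.

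Second, use the mass formula. With $a=b=1$ and $T_p=2\tau_p$, \eqref{eq:optimal-mass} gives
\[
M_{*,p}=\exp\{-\Acal_{*,p}/(2\tau_p)\}.
\]
The exponent is
\[
\frac{\Acal_{*,p}}{2\tau_p}=\frac{p(w+o(1))}{2\bar\tau p^\alpha}=\frac{w+o(1)}{2\bar\tau}\,p^{1-\alpha}.
\]
Since $w>0$, the three regimes follow directly:
\begin{itemize}
\item if $\alpha<1$, the exponent tends to $+\infty$, so $M_{*,p}\to0$;
\item if $\alpha=1$, the exponent tends to $w/(2\bar\tau)$, so $M_{*,p}\to e^{-w/(2\bar\tau)}$;
\item if $\alpha>1$, the exponent tends to $0$, so $M_{*,p}\to1$.
\end{itemize}
This is just the specialization of \eqref{eq:phase-mass} with $\beta=1$ and $K=w$.

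The only delicate point is the uniformity of the remainder in Lemma~\ref{lem:large-tau} as $p$ and $\tau_p$ grow together. The lemma already asserts this uniformity over $p$ and over all admissible covariance pairs, so the remaining work is only to verify its mean hypothesis, which was done in the first step. No further obstacle is expected.
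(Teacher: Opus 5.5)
Your proposal is correct and follows the paper's proof essentially step for step. You bound the mean discrepancy by $W_2^2(\mu_{0,p},\mu_{1,p})=pw_p=O(p)$ to invoke the full version of Lemma~\ref{lem:large-tau}, obtain $\Acal_{*,p}/p=w_p+O(p^{-\alpha})\to w$, and read off the three regimes from $M_{*,p}=\exp\{-\Acal_{*,p}/(2\tau_p)\}$ with exponent $\frac{w+o(1)}{2\bar\tau}p^{1-\alpha}$.
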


\begin{proof}
Since $w_p\to w<\infty$ and the covariance contribution to Gaussian $W_2^2$ is nonnegative,
\[
\|m_{0,p}-m_{1,p}\|^2\le W_2^2(\mu_{0,p},\mu_{1,p})=O(p).
\]
Hence the full version of \cref{lem:large-tau} applies.  Since $\alpha>0$, one has $\tau_p\to\infty$, and
\[
\frac{\Acal_{*,p}}p
=w_p+O(p^{-\alpha})\to w.
\]
For equal unit masses, \eqref{eq:optimal-mass} reduces to
\[
M_{*,p}=\exp\{-\Acal_{*,p}/(2\tau_p)\}.
\]
The exponent equals
\[
-\frac{w+o(1)}{2\bar\tau}p^{1-\alpha},
\]
which yields the three limits in \eqref{eq:gaussian-phase}.
\end{proof}

Thus $\tau_p\asymp p$ is the critical penalty scale for Gaussian discrepancies whose balanced $W_2^2$ cost is of order $p$; the centered covariance-only phase diagram is obtained by setting $m_{0,p}=m_{1,p}$.

\begin{remark}
The assumption $w>0$ isolates dense alternatives with balanced Gaussian Wasserstein cost of order $p$.  If $w=0$, the leading $O(p)$ discrepancy vanishes and the critical penalty scale can be smaller than $p$; determining it requires the next nonzero order of $W_2^2$ and is not covered by the phase diagram above.
\end{remark}

\begin{corollary}[Critical-scale limit]\label{cor:critical-limit}
Assume the conditions of Assumption~\ref{ass:spectral-bounds} and equal unit masses.  Let
\[
\mu_{i,p}=\N(m_{i,p},\Sigma_{i,p}),\qquad i=0,1,
\]
set $\tau_p=p\bar\tau$ with $\bar\tau>0$, and suppose
\[
w_p:=\frac1pW_2^2(\mu_{0,p},\mu_{1,p})\to w.
\]
Then the normalized KL-UOT value converges to
\begin{equation}\label{eq:critical-limit}
\frac1p\Ucal_{\tau_p,\tau_p}(\mu_{0,p},\mu_{1,p})
\longrightarrow
2\bar\tau\left(1-e^{-w/(2\bar\tau)}\right).
\end{equation}
\end{corollary}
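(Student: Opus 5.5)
The plan is to reduce everything to the explicit equal-penalty, unit-mass value formula \eqref{eq:equal-uot-exp} and then apply \cref{lem:large-tau}. With $a_p=b_p=1$ and $\tau_{0,p}=\tau_{1,p}=\tau_p=p\bar\tau$, the mass separation identity \eqref{eq:mass-separation}--\eqref{eq:optimal-mass} gives
\[
\frac1p\Ucal_{\tau_p,\tau_p}(\mu_{0,p},\mu_{1,p})
=2\bar\tau\Bigl(1-\exp\bigl\{-\tfrac{\Acal_{*,p}}{2\bar\tau p}\bigr\}\Bigr).
\]
The map $x\mapsto 2\bar\tau(1-e^{-x/(2\bar\tau)})$ is continuous on $[0,\infty)$. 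It therefore suffices to show $\Acal_{*,p}/p\to w$.

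First I would check the hypotheses of the full statement \eqref{eq:large-tau-full} in \cref{lem:large-tau}. Since $w_p\to w$ is finite, $W_2^2(\mu_{0,p},\mu_{1,p})=O(p)$. The mean term is dominated by the full Gaussian $W_2^2$ because the Bures covariance part is nonnegative. Hence $\|m_{0,p}-m_{1,p}\|^2=O(p)$ with a uniform constant, exactly as in the proof of \cref{cor:gaussian-phase}. Assumption~\ref{ass:spectral-bounds} supplies the remaining uniformity, so \cref{lem:large-tau} yields
\[
\Acal_{*,p}(\tau_p)=W_2^2(\mu_{0,p},\mu_{1,p})+O(p/\tau_p)=p\,w_p+O(1/\bar\tau).
\]
Dividing by $p$ gives $\Acal_{*,p}/p=w_p+O(1/p)\to w$.

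Unlike \cref{cor:gaussian-phase}, here $w=0$ is allowed. The argument does not need $w>0$: continuity of the outer map at $0$ gives the limit $0=2\bar\tau(1-e^0)$. I expect no real obstacle. The only delicate point is that the remainder in \cref{lem:large-tau} is uniform in $p$, which is how that lemma is stated. That uniformity is what makes the $O(p/\tau_p)=O(1)$ error negligible after normalizing by $p$.
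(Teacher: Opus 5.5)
Your proposal is correct and follows essentially the same route as the paper. Both bound the mean discrepancy by $W_2^2=O(p)$, invoke \cref{lem:large-tau} to get $\Acal_{*,p}/p=w_p+O(p^{-1})\to w$, and substitute into \eqref{eq:equal-uot-exp}; your remark that $w=0$ is permitted here, handled by continuity of the outer map, is a correct addition.
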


\begin{proof}
As above, $w_p=O(1)$ implies $\|m_{0,p}-m_{1,p}\|^2=O(p)$.  By \cref{lem:large-tau}, $\Acal_{*,p}/p=w_p+O(p^{-1})\to w$.  Substitution into \eqref{eq:equal-uot-exp} gives \eqref{eq:critical-limit}.
\end{proof}

\subsection{Joint sample-noise/penalty scaling}

The preceding phase diagram starts from deterministic Gaussian discrepancies.  High-dimensional sampling noise creates a second $O(p)$ source of discrepancy even under the exact identity population.  The next result combines the sample-covariance and growing-penalty limits.

Define the balanced scalar function
\begin{equation}\label{eq:f-infinity}
f_\infty(x):=(\sqrt x-1)^2,\qquad x\ge0.
\end{equation}

\begin{lemma}\label{lem:uniform-f-balanced}
For every $L<\infty$ there exists $C_L<\infty$ such that, for all $\tau\ge1$,
\begin{equation}\label{eq:uniform-f-balanced}
\sup_{0\le x\le L}
\bigl|f_\tau(x)-f_\infty(x)\bigr|
\le \frac{C_L}{\tau}.
\end{equation}
\end{lemma}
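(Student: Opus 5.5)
We want $\sup_{0\le x\le L}|f_\tau(x)-f_\infty(x)|\le C_L/\tau$ for $\tau\ge1$. Set $r=2/\tau\in(0,2]$ and $t=\sqrt x\in[0,\sqrt L]$. Since $f_\tau=\tau\log g_r$, the natural approach is to expand $g_r$ in $r$ with a uniform second-order remainder and then expand the logarithm. Here
\[
g_r(t):=\sqrt{(1+r)(1+rt^2)}-rt.
\]
Everything is explicit, so we aim for a uniform Taylor bound in $r$ on the compact set $t\in[0,\sqrt L]$.

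\textbf{Step 1: expand $g_r$.} Write
\[
\sqrt{(1+r)(1+rt^2)}=\sqrt{1+r(1+t^2)+r^2t^2}.
\]
For $u\ge0$ we have $|\sqrt{1+u}-1-u/2|\le u^2/8$. Applying this with
\[
u=r(1+t^2)+r^2t^2\le r(1+L)+r^2L
\]
gives
\[
\sqrt{(1+r)(1+rt^2)}=1+\tfrac r2(1+t^2)+O_L(r^2),
\]
with the constant uniform over $t\le\sqrt L$ and $r\le2$. Hence
\[
g_r(t)=1+\tfrac r2(1-t)^2+E_r(t),\qquad |E_r(t)|\le K_Lr^2.
\]

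\textbf{Step 2: expand the logarithm.} From the proof of Proposition~\ref{prop:f-tau} we know $g_r\ge1$. Set $v:=g_r-1$, so $v\ge0$ and $v\le r\,M_L$ uniformly for $r\le2$, $t\le\sqrt L$. For $v\ge0$ we have $0\le v-\log(1+v)\le v^2/2$, so
\[
|\log g_r-v|\le \tfrac12 M_L^2r^2.
\]
Combining with Step 1,
\[
\log g_r=\tfrac r2(1-t)^2+O_L(r^2).
\]

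\textbf{Step 3: multiply by $\tau$.} Multiplying by $\tau=2/r$ yields
\[
f_\tau(x)=(1-\sqrt x)^2+O_L(r)=f_\infty(x)+O_L(1/\tau).
\]
The constant depends only on $L$, because all bounds were taken uniformly over $r\in(0,2]$ and $t\in[0,\sqrt L]$. The endpoint $x=0$ is covered, since $f_\tau(0)$ is the continuous extension and the formula remains valid at $t=0$.

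The main obstacle is purely bookkeeping: each remainder must be uniform in both $r$ and $x$. This is handled by using explicit inequalities rather than asymptotic Taylor statements. No blow-up occurs near $x=0$ because we work in the variable $t=\sqrt x$, in which $g_r$ is smooth.
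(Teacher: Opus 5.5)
Your proof is correct. It rests on the same first-order expansion the paper uses, $\log A(r,t)=\tfrac r2(1-t)^2+O_L(r^2)$ with $A(0,t)=1$ and $\partial_rA(0,t)=(1-t)^2/2$; the two arguments differ only in how they make the bound uniform over all $\tau\ge1$.

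The paper relies on $C^2$-smoothness of $A$ in $r$ near $0$. Its Taylor bound therefore holds only for $0\le r\le r_0(L)$, that is, for $\tau\ge\tau_0(L)$. It then covers the remaining range $1\le\tau\le\tau_0(L)$ separately, by continuity of $(\tau,x)\mapsto\tau|f_\tau(x)-f_\infty(x)|$ on the compact set $[1,\tau_0(L)]\times[0,L]$.

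You instead use the global scalar inequalities $0\le 1+u/2-\sqrt{1+u}\le u^2/8$ and $0\le v-\log(1+v)\le v^2/2$ for $u,v\ge0$, together with $g_r\ge1$ from Proposition~\ref{prop:f-tau}. These hold on the whole range $r\in(0,2]$ at once. This removes the compactness patch and gives an explicit constant, $C_L=2(2K_L+M_L^2)$ in terms of your $K_L$ and $M_L$.

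The paper's route is softer and would carry over to any family that is merely smooth in $r$ near zero. Yours is more elementary and quantitative.
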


\begin{proof}
Write $r=2/\tau$ and $t=\sqrt x$.  On a fixed compact set $0\le t\le\sqrt L$,
\[
A(r,t):=\sqrt{(1+r)(1+rt^2)}-rt
\]
is $C^2$ in $r$ near zero, uniformly in $t$, with $A(0,t)=1$ and
$\partial_rA(0,t)=(t-1)^2/2$.  Taylor expansion of $\log A(r,t)$ therefore gives
\[
\log A(r,t)=\frac r2(t-1)^2+O_L(r^2)
\]
uniformly in $t$ for $0\le r\le r_0(L)$, and hence \eqref{eq:uniform-f-balanced} for all $\tau\ge\tau_0(L):=2/r_0(L)$.  It remains only to cover the bounded interval $1\le\tau\le\tau_0(L)$.  The map
\[
(\tau,x)\longmapsto
\tau\,|f_\tau(x)-f_\infty(x)|
\]
is continuous on the compact set $[1,\tau_0(L)]\times[0,L]$, and therefore has a finite maximum $C_L^{\rm comp}$.  Enlarging the constant in the large-$\tau$ estimate to dominate $C_L^{\rm comp}$ proves \eqref{eq:uniform-f-balanced} for every $\tau\ge1$.
\end{proof}

\begin{theorem}[Joint RMT--penalty limit]\label{thm:joint-rmt-penalty}
Assume the sampling and population hypotheses of Theorem~\ref{thm:general-pop-limit}, now with $c\in(0,\infty)$, and let $\tau_p\to\infty$.  Define
\begin{equation}\label{eq:theta-infinity}
\Theta_\infty(c,H)
:=\int f_\infty(x)\,\dd F_{c,H}(x)
=\int(\sqrt x-1)^2\,\dd F_{c,H}(x).
\end{equation}
Then the normalized spectral action converges almost surely to
\begin{equation}\label{eq:joint-rmt-penalty-limit}
\frac1p\mathfrak A_{\tau_p}(S_p)
\longrightarrow \Theta_\infty(c,H).
\end{equation}
\end{theorem}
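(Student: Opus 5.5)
The plan is to split the normalized statistic into a uniform penalty error and a fixed-function spectral limit:
\[
\frac1p\mathfrak A_{\tau_p}(S_p)-\Theta_\infty(c,H)
=\int\bigl(f_{\tau_p}-f_\infty\bigr)\,\dd F^{S_p}
+\Bigl(\int f_\infty\,\dd F^{S_p}-\int f_\infty\,\dd F_{c,H}\Bigr).
\]
Everything is controlled on a single almost-sure event $\Omega_0$, on which two things hold. First, $F^{S_p}\Rightarrow F_{c,H}$, by the deformed Marchenko--Pastur theorem already invoked in the proof of Theorem~\ref{thm:general-pop-limit}. Second, there is a common compact envelope $\operatorname{spec}(S_p)\subset[0,L]$ for all large $p$, where $L:=\overline\lambda(1+\sqrt{c^*})^2+1$ with $c^*>c$. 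This envelope comes from the Bai--Yin upper-edge theorem under the finite fourth moment, exactly as in that proof. No lower spectral gap is needed, since both $f_{\tau}$ (by its continuous extension \eqref{eq:f-zero}) and $f_\infty$ are continuous at $0$; this is what lets the result cover every $c\in(0,\infty)$.

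For the first term, I will fix $\omega\in\Omega_0$ and take $p$ large enough that $\tau_p\ge1$ and the envelope holds. Lemma~\ref{lem:uniform-f-balanced} applied with this $L$ then gives
\[
\Bigl|\int(f_{\tau_p}-f_\infty)\,\dd F^{S_p}\Bigr|
\le\sup_{0\le x\le L}|f_{\tau_p}(x)-f_\infty(x)|
\le C_L/\tau_p\to0.
\]
Here $F^{S_p}$ is a probability measure carried by $[0,L]$, and the bound is deterministic once the envelope is in force. So the penalty growth enters only through this uniform estimate, and no rate condition on $\tau_p$ relative to $p$ is required.

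For the second term, $f_\infty(x)=(\sqrt x-1)^2$ is continuous on $[0,L]$. I also need the support of $F_{c,H}$ to lie in $[0,L]$, either from the envelope \eqref{eq:deterministic-mp-envelope} or by weak limits of measures carried by $[0,L]$. Then weak convergence on a common compact set gives $\int f_\infty\,\dd F^{S_p}\to\int f_\infty\,\dd F_{c,H}=\Theta_\infty(c,H)$. Combining the two terms via the definition \eqref{eq:psd-spectral-extension} yields \eqref{eq:joint-rmt-penalty-limit}.

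The main care point is keeping the almost-sure event independent of the sequence $\tau_p$. That is why the envelope and weak convergence are fixed first and Lemma~\ref{lem:uniform-f-balanced}, which is uniform in $\tau\ge1$, is applied pathwise afterward. A remark can add that for $c<1$ the statistic eventually equals the genuine Gaussian action, by the Bai--Yin lower edge.
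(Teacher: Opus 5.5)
Your proposal is correct and follows essentially the same route as the paper's proof. You use the almost-sure Bai--Yin envelope $[0,L]$ from the proof of Theorem~\ref{thm:general-pop-limit}, apply the uniform bound $C_L/\tau_p$ from Lemma~\ref{lem:uniform-f-balanced}, and conclude with $F^{S_p}\Rightarrow F_{c,H}$ and continuity of $f_\infty$ on $[0,L]$. Your extra care points, taking $\tau_p\ge1$ and noting that $F_{c,H}$ is carried by $[0,L]$, are valid small refinements that the paper leaves implicit.
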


\begin{proof}
The proof of Theorem~\ref{thm:general-pop-limit} gives an almost-sure deterministic upper spectral envelope $[0,L]$ for $S_p$ after discarding finitely many indices.  Hence, by Lemma~\ref{lem:uniform-f-balanced},
\[
\left|
\frac1p\mathfrak A_{\tau_p}(S_p)
-\int f_\infty\,\dd F^{S_p}
\right|
\le\frac{C_L}{\tau_p}\longrightarrow0.
\]
The general Marchenko--Pastur theorem gives $F^{S_p}\Rightarrow F_{c,H}$ almost surely, and $f_\infty$ is continuous on $[0,L]$.  Therefore
$\int f_\infty\,\dd F^{S_p}\to\Theta_\infty(c,H)$, proving \eqref{eq:joint-rmt-penalty-limit}.
\end{proof}

\begin{corollary}[RMT-induced phase diagram]\label{cor:rmt-penalty-phase}
Assume the exact identity population $\Sigma_p=I_p$, $p/n\to c\in(0,\infty)$, and the finite-fourth-moment sampling conditions of Theorem~\ref{thm:general-pop-limit}.  Let
\begin{equation}\label{eq:b-infinity-c}
b_{\infty,c}:=\int(\sqrt x-1)^2\,\dd\mu_{\mathrm{MP},c}(x)>0
\end{equation}
and take
$\tau_p=\bar\tau p^\alpha$ with $\bar\tau>0$ and $\alpha>0$.  Define the spectral mass and raw-value proxies
\[
M_{p}^{\rm spec}
:=\exp\left\{-\frac{\mathfrak A_{\tau_p}(S_p)}{2\tau_p}\right\},
\qquad
U_{p}^{\rm spec}:=2\tau_p(1-M_p^{\rm spec}).
\]
Consequently, the spectral transported mass and normalized raw value satisfy, almost surely,
\begin{equation}\label{eq:rmt-penalty-mass-phase}
M_p^{\rm spec}\longrightarrow
\begin{cases}
0,&0<\alpha<1,\\[0.5ex]
\exp\{-b_{\infty,c}/(2\bar\tau)\},&\alpha=1,\\[0.5ex]
1,&\alpha>1,
\end{cases}
\end{equation}
and
\begin{equation}\label{eq:rmt-penalty-value-phase}
\frac{U_p^{\rm spec}}p\longrightarrow
\begin{cases}
0,&0<\alpha<1,\\[0.5ex]
2\bar\tau\bigl(1-e^{-b_{\infty,c}/(2\bar\tau)}\bigr),&\alpha=1,\\[0.5ex]
b_{\infty,c},&\alpha>1.
\end{cases}
\end{equation}
\end{corollary}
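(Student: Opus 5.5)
The plan is to reduce everything to Theorem~\ref{thm:joint-rmt-penalty} and then do elementary scalar asymptotics. Specializing that theorem to $\Sigma_p=I_p$, so that $H_p=H=\delta_1$ and $F_{c,\delta_1}=\mu_{\mathrm{MP},c}$, gives almost surely
\[
\frac1p\mathfrak A_{\tau_p}(S_p)\longrightarrow b_{\infty,c},
\]
because $\tau_p=\bar\tau p^\alpha\to\infty$ for $\alpha>0$. The hypotheses of Theorem~\ref{thm:general-pop-limit} are met trivially here, since the population spectrum is the single point $1$. I would then fix a probability-one event on which this convergence holds and argue pathwise from there on.

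The next step is strict positivity of $b_{\infty,c}$. The integrand $(\sqrt x-1)^2$ is nonnegative and vanishes only at $x=1$. On the other hand, $\mu_{\mathrm{MP},c}$ is not $\delta_1$: for $c>0$ its absolutely continuous part has a positive density on a nondegenerate interval $[a_c,b_c]$. Hence the integral is strictly positive. This is the same argument as in Corollary~\ref{cor:mp-bias}, with $f_\infty$ in place of $f_\tau$.

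With this in hand, write $\mathfrak A_{\tau_p}(S_p)=p\,(b_{\infty,c}+o(1))$. Then
\[
\frac{\mathfrak A_{\tau_p}(S_p)}{2\tau_p}=\frac{b_{\infty,c}+o(1)}{2\bar\tau}\,p^{1-\alpha}.
\]
This tends to $+\infty$ when $\alpha<1$, to $b_{\infty,c}/(2\bar\tau)$ when $\alpha=1$, and to $0$ when $\alpha>1$. The mass limits \eqref{eq:rmt-penalty-mass-phase} then follow by continuity of $\exp$, and positivity of $b_{\infty,c}$ is needed in the first case.

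For the value, $U_p^{\rm spec}/p=2\bar\tau p^{\alpha-1}(1-M_p^{\rm spec})$, and I would treat the three regimes separately.
\begin{itemize}
\item If $\alpha<1$, then $p^{\alpha-1}\to0$ and $0\le 1-M_p^{\rm spec}\le1$, so the limit is $0$.
\item If $\alpha=1$, the limit is read off directly from the mass limit.
\item If $\alpha>1$, put $x_p:=\mathfrak A_{\tau_p}(S_p)/(2\tau_p)\to0$. Using $1-e^{-x}=x(1+O(x))$ as $x\to0$ gives $U_p^{\rm spec}/p=(2\tau_p/p)\,x_p(1+o(1))=\mathfrak A_{\tau_p}(S_p)/p\cdot(1+o(1))\to b_{\infty,c}$.
\end{itemize}

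The only step I expect to need care is $\alpha>1$, where a small exponent multiplies a growing prefactor. The linearization of $1-e^{-x}$ resolves it, since the prefactor $2\tau_p$ cancels exactly against the $1/(2\tau_p)$ in $x_p$. Everything else is a direct consequence of Theorem~\ref{thm:joint-rmt-penalty}.
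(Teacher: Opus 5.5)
Your proposal is correct and follows essentially the same route as the paper's proof: you specialize Theorem~\ref{thm:joint-rmt-penalty} to $H=\delta_1$, get positivity of $b_{\infty,c}$ from $f_\infty$ vanishing only at $1$, and then do the three-regime scalar asymptotics of the exponent $\tfrac{b_{\infty,c}+o(1)}{2\bar\tau}p^{1-\alpha}$. For the value you likewise use $U_p^{\rm spec}\le 2\tau_p=o(p)$ when $\alpha<1$, the mass limit directly when $\alpha=1$, and the linearization $1-e^{-x}=x+o(x)$ when $\alpha>1$.
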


\begin{proof}
Theorem~\ref{thm:joint-rmt-penalty} with $H=\delta_1$ gives
$\mathfrak A_{\tau_p}(S_p)/p\to b_{\infty,c}$ almost surely.  Strict positivity follows because $f_\infty$ vanishes only at $1$ and $\mu_{\mathrm{MP},c}$ is nondegenerate for every $c>0$.  Therefore
\[
-\frac{\mathfrak A_{\tau_p}(S_p)}{2\tau_p}
=-\frac{b_{\infty,c}+o(1)}{2\bar\tau}
 p^{1-\alpha},
\]
which proves \eqref{eq:rmt-penalty-mass-phase}.  For $\alpha<1$, $U_p^{\rm spec}\le2\tau_p=o(p)$.  At $\alpha=1$, substitute the mass limit directly.  For $\alpha>1$, the exponent tends to zero and $1-e^{-x}=x+o(x)$, so
$U_p^{\rm spec}/p=\mathfrak A_{\tau_p}(S_p)/p+o(1)\to b_{\infty,c}$.  This proves \eqref{eq:rmt-penalty-value-phase}.
\end{proof}

Thus $\tau_p\asymp p$ is critical not only for deterministic dense Gaussian discrepancies but also for sample-covariance noise under the exact identity population.  For $c<1$, the spectral proxies coincide eventually almost surely with the genuine Gaussian plug-in transported mass and UOT value; for $c\ge1$, only their spectral-extension interpretation is asserted.

The same large-penalty expansion also transfers balanced-Wasserstein consistency to the deterministic critical KL-UOT scale.

\subsection{Critical-scale consistency transfer}

Let
\[
\mu_{i,p}=\N(m_{i,p},\Sigma_{i,p}),\qquad
w_p=\frac1pW_2^2(\mu_{0,p},\mu_{1,p}),
\]
under Assumption~\ref{ass:spectral-bounds}, and suppose $w_p=O(1)$.  At the critical scale $\tau_p=p\bar\tau$, \cref{cor:critical-limit} expresses the leading KL-UOT target as a smooth transform of the full balanced Gaussian Wasserstein cost.  Let $\widehat w_p$ be any estimator satisfying
\begin{equation}\label{eq:w-consistency}
\widehat w_p-w_p\xrightarrow{\mathbb P}0.
\end{equation}
Only consistency of $\widehat w_p$ is used; no distributional limit is required.  In the centered case, under the sampling and spectral assumptions of Tiomoko and Couillet~\cite[Corollary~1]{TiomokoCouillet2019}, their random-matrix-corrected estimator of the centered-Gaussian Wasserstein cost provides a concrete instance and satisfies the stronger almost-sure form of \eqref{eq:w-consistency}.

A finite-sample random-matrix correction of a nonnegative target can be negative, so we use the positive-part estimator
\begin{equation}\label{eq:w-positive}
\widehat w_p^+:=\max\{\widehat w_p,0\}
\end{equation}
and define the induced critical-scale estimator by
\begin{equation}\label{eq:critical-estimator}
\widehat u_p
:=2\bar\tau\left(1-e^{-\widehat w_p^+/(2\bar\tau)}\right).
\end{equation}
For the corresponding population pair, write
\begin{equation}\label{eq:critical-target}
U_p:=\Ucal_{\tau_p,\tau_p}(\mu_{0,p},\mu_{1,p}).
\end{equation}

\begin{corollary}[Critical-scale consistency transfer]\label{cor:critical-estimator}
Under Assumption~\ref{ass:spectral-bounds}, $w_p=O(1)$, $\tau_p=p\bar\tau$, and \eqref{eq:w-consistency}, the critical-scale estimator is consistent in the sense that
\begin{equation}\label{eq:u-consistency}
\widehat u_p-\frac{U_p}{p}
\xrightarrow{\mathbb P}0.
\end{equation}
If the stronger input $\widehat w_p-w_p\to0$ almost surely holds, then the convergence in \eqref{eq:u-consistency} is also almost sure.  For $g(x)=2\bar\tau(1-e^{-x/(2\bar\tau)})$, one has the dimension-free contraction bound
\begin{equation}\label{eq:lipschitz-transfer}
|g(x)-g(y)|\le|x-y|,
\qquad x,y\ge0.
\end{equation}
\end{corollary}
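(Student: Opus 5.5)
We first establish the contraction bound \eqref{eq:lipschitz-transfer} and then use it to split the error in \eqref{eq:u-consistency} into two pieces: a stochastic estimation error and a deterministic approximation error.

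\textbf{Step 1: the contraction bound.} The function $g(x)=2\bar\tau(1-e^{-x/(2\bar\tau)})$ is differentiable on $[0,\infty)$ with
\[
g'(x)=e^{-x/(2\bar\tau)}\in(0,1].
\]
The mean value theorem then gives $|g(x)-g(y)|\le|x-y|$ for all $x,y\ge0$.

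\textbf{Step 2: the error decomposition.} Write
\[
\widehat u_p-\frac{U_p}{p}
=\bigl[g(\widehat w_p^+)-g(w_p)\bigr]+\Bigl[g(w_p)-\frac{U_p}{p}\Bigr].
\]

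\textbf{Step 3: the stochastic term.} Since $w_p\ge0$, the positive-part map is $1$-Lipschitz and fixes $w_p$. Hence $|\widehat w_p^+-w_p|\le|\widehat w_p-w_p|$. Combining this with \eqref{eq:lipschitz-transfer} gives
\[
|g(\widehat w_p^+)-g(w_p)|\le|\widehat w_p-w_p|.
\]
The right-hand side tends to zero in probability by \eqref{eq:w-consistency}, and almost surely under the stronger input.

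\textbf{Step 4: the deterministic term.} We do not pass through a limit $w$, since $w_p$ need not converge; the bound has to hold along the whole sequence. For equal unit masses, \eqref{eq:equal-uot-exp} gives
\[
\frac{U_p}{p}=2\bar\tau\bigl(1-e^{-\Acal_{*,p}/(2p\bar\tau)}\bigr)=g\bigl(\Acal_{*,p}/p\bigr).
\]
Since $w_p=O(1)$, we have $\|m_{0,p}-m_{1,p}\|^2\le W_2^2=O(p)$, exactly as in the proof of \cref{cor:critical-limit}. The full statement \eqref{eq:large-tau-full} of \cref{lem:large-tau} then applies with $\tau_p=p\bar\tau$ and gives
\[
\Acal_{*,p}/p=w_p+O(p^{-1}),
\]
with constants uniform in $p$. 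Both arguments of $g$ are nonnegative, so \eqref{eq:lipschitz-transfer} yields
\[
\Bigl|g(w_p)-\frac{U_p}{p}\Bigr|\le\Bigl|w_p-\frac{\Acal_{*,p}}{p}\Bigr|=O(p^{-1}).
\]

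\textbf{Conclusion.} Adding the two bounds proves \eqref{eq:u-consistency} in probability. In the almost-sure case it proves almost-sure convergence as well, because the deterministic term is independent of the randomness.

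The only subtle point is the uniformity of the remainder in \cref{lem:large-tau}. That lemma states its constants depend only on $\underline\lambda$, $\overline\lambda$ and the $O(p)$ constant for the mean difference, and the latter is controlled by $\sup_p w_p$, so the uniformity holds.
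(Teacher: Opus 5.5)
Your proof is correct and follows essentially the same route as the paper: the contraction bound from $g'(x)=e^{-x/(2\bar\tau)}\in(0,1]$, non-expansiveness of the positive part, and Lemma~\ref{lem:large-tau} at $\tau_p=p\bar\tau$ to get $\Acal_{*,p}/p=w_p+O(p^{-1})$. The one small difference is that you also apply the contraction bound to the deterministic term, which makes the paper's $o(1)$ explicitly $O(p^{-1})$ without passing through a limit $w$.
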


\begin{proof}
Since $w_p=O(1)$, the mean difference automatically satisfies $\|m_{0,p}-m_{1,p}\|^2=O(p)$.  By \cref{lem:large-tau},
\[
\frac{\Acal_{*,p}}p=w_p+O(p^{-1}).
\]
Using \eqref{eq:equal-uot-exp} then gives
\[
\frac{U_p}{p}=g(w_p)+o(1).
\]
Since $w_p\ge0$, projection onto $[0,\infty)$ is non-expansive, so
\[
|\widehat w_p^+-w_p|\le|\widehat w_p-w_p|\xrightarrow{\mathbb P}0.
\]
Since $g'(x)=e^{-x/(2\bar\tau)}\in(0,1]$ for $x\ge0$, \eqref{eq:lipschitz-transfer} follows.  Therefore
\[
\left|\widehat u_p-\frac{U_p}{p}\right|
\le|\widehat w_p^+-w_p|+o(1)
\le|\widehat w_p-w_p|+o(1)\xrightarrow{\mathbb P}0.
\]
The same deterministic inequalities give the almost-sure conclusion whenever the input consistency is almost sure.
\end{proof}

\section{Numerical illustrations}\label{sec:numerics}

The numerical study examines the finite-dimensional spectral identities, first- and second-order asymptotic predictions, penalty scaling, and the two-sample ridge-product law.  Each experiment is organized around the corresponding theoretical statement; auxiliary numerical diagnostics and complete reproduction details are provided in the Supplementary Numerical Material.

\subsection{Finite-dimensional validation}

We begin by comparing the exact spectral formulas with direct minimization of the Gaussian covariance objective at $p=2$.  For
\[
\Sigma_0=\begin{pmatrix}1.4&0.35\\0.35&0.9\end{pmatrix},\qquad
\Sigma_1=\begin{pmatrix}0.8&-0.2\\-0.2&1.6\end{pmatrix},
\]
the equal-penalty formula in Corollary~\ref{cor:ridge-product} and the asymmetric formula in Theorem~\ref{thm:exact-spectral} agree with direct Cholesky-parametrized minimization to machine precision; across the two cases the absolute discrepancy is below $3\times10^{-15}$.  Further optimizer and stationarity diagnostics are reported in the Supplementary Numerical Material.

\subsection{First-order spectral convergence}

We examine \cref{thm:general-pop-limit} for two qualitatively different population spectra,
\[
H_{\rm disc}=\tfrac12\delta_{1/2}+\tfrac12\delta_2,
\qquad
H_{\rm unif}=\operatorname{Unif}[1/2,2].
\]
At $(c,\tau)=(1/2,2)$, numerical solution of the Silverstein equation gives
\[
\Theta_2(1/2,H_{\rm disc})\simeq0.122311,
\qquad
\Theta_2(1/2,H_{\rm unif})\simeq0.089938.
\]
For $H_{\rm unif}$, the population integral is evaluated with a $48$-point Gauss--Legendre discretization, and the finite-$p$ population eigenvalues are its midpoint quantiles.  The sampling laws are Gaussian, Rademacher, and standardized Student $t_8$, namely $\sqrt{6/8}\,t_8$; the latter has unit variance and finite moments beyond order four.  For all three entry laws, the reported finite-$p$ quantity is the positive-semidefinite spectral extension $\mathfrak A_\tau(S_p)$ from \eqref{eq:psd-spectral-extension}; whenever $S_p\succ0$ it equals the nondegenerate Gaussian KL-UOT covariance action.

\begin{table}[ht]
\centering
\caption{First-order spectral convergence for $c=1/2$ and $\tau=2$, based on $140$ repetitions at $p=80$ and $70$ repetitions at $p=240$.  Entries in the three sampling columns are Monte Carlo means of $p^{-1}\mathfrak A_\tau(S_p)$, with $10^4\times\mathrm{MCSE}$ in parentheses.}
\label{tab:robustness-stress}
{\small
\begin{tabular}{@{}lrrrrr@{}}
\toprule
population law & $p$ & $\Theta_2(1/2,H)$ & Gaussian & Rademacher & std. $t_8$\\
\midrule
$H_{\rm disc}$ & 80  & 0.12231 & 0.12302 (2.14) & 0.12199 (1.31) & 0.12387 (2.36)\\
$H_{\rm disc}$ & 240 & 0.12231 & 0.12243 (1.10) & 0.12231 (0.57) & 0.12289 (1.18)\\
$H_{\rm unif}$ & 80  & 0.08994 & 0.09051 (1.84) & 0.08937 (1.41) & 0.09139 (1.84)\\
$H_{\rm unif}$ & 240 & 0.08994 & 0.09028 (0.72) & 0.08987 (0.60) & 0.09037 (0.87)\\
\bottomrule
\end{tabular}}
\end{table}

The three sampling laws approach the same deterministic limit, with visibly larger finite-size effects for the standardized $t_8$ samples at the reported dimensions.  The continuous-spectrum case confirms that the agreement is not tied to a finitely supported population law.

\subsection{Bai--Silverstein calibration}

For the identity population with $(c,\tau)=(1/2,2)$, numerical integration gives
\[
b_{2,1/2}=0.07209180,
\qquad
\mathfrak m_{1/2}^{\rm BS}(f_2)=0.07123457,
\qquad
\mathfrak v_{1/2}^{\rm BS}(f_2)=0.01789501.
\]
Table~\ref{tab:lss-convergence} reports the standardized statistic $Z_{p,n}^{(2)}$ across four dimensions.  Its empirical mean and standard deviation remain close to $0$ and $1$, respectively, while the rejection frequency is consistent with the nominal $5\%$ level.

\begin{table}[ht]
\centering
\caption{Finite-size Bai--Silverstein calibration under $H_0:\Sigma=I_p$, with $n=2p$ and $\tau=2$.  ``Size'' is the rejection probability of the one-sided nominal $5\%$ rule; its MCSE is shown in parentheses.}
\label{tab:lss-convergence}
\begin{tabular}{rrrrrr}
\toprule
$p$ & $n$ & repetitions & mean of $Z$ & sd of $Z$ & size\\
\midrule
40  & 80  & 800 & $-0.004$ & 1.012 & 0.045 (0.007)\\
80  & 160 & 600 & 0.017 & 1.009 & 0.045 (0.008)\\
160 & 320 & 350 & $-0.045$ & 0.963 & 0.043 (0.011)\\
320 & 640 & 180 & $-0.036$ & 0.995 & 0.050 (0.016)\\
\bottomrule
\end{tabular}
\end{table}

The contour and Joukowski/Fourier evaluations of the Bai--Silverstein mean and variance agree to the reported precision, providing an independent check of the calibration formulas.

The second-order calibration is specific to the real-Gaussian model.  A companion misspecification experiment with Rademacher and standardized $t_8$ entries, reported in the Supplementary Numerical Material, shows the expected size distortion when the Gaussian centering and variance are used outside their stated regime.  This is consistent with the cumulant and eigenvector corrections in general covariance-matrix LSS CLTs~\cite{NajimYao2016} and separates first-order finite-moment robustness from second-order calibration.

A representative dense-alternative power comparison with likelihood-ratio and Frobenius-type LSS benchmarks is reported in the Supplementary Numerical Material.

\subsection{Penalty-scaling phase diagram}

For the penalty-scaling illustration, take equal unit masses and
\[
\Sigma_0=I_p,\qquad
\Sigma_1=\diag(1/2,2,1/2,2,\ldots).
\]
The balanced Gaussian Wasserstein cost per coordinate is $w=0.128680$.  With $\tau_p=p^\alpha$ and $p=12800$, the transported masses for $\alpha=1/2,1,$ and $3/2$ are $0.000807$, $0.937698$, and $0.999431$, respectively.  In particular, the critical value $0.937698$ is already close to the limit $e^{-w/2}\simeq0.937686$.  The three values exhibit the subcritical mass collapse, nondegenerate critical regime, and supercritical recovery predicted by \cref{cor:gaussian-phase}; the corresponding normalized-action diagnostics are reported in the Supplementary Numerical Material.

The same critical exponent appears under the exact identity population for a different reason.  At $c=1/2$, direct Marchenko--Pastur quadrature gives $b_{\infty,1/2}\simeq0.134401$, and Corollary~\ref{cor:rmt-penalty-phase} yields, at $\bar\tau=1$,
\[
M_p^{\rm spec}\to0.935008,
\qquad
U_p^{\rm spec}/p\to0.129985.
\]
Here the $O(p)$ discrepancy is generated by sample-covariance noise rather than by a deterministic covariance mismatch, distinguishing the random-matrix phase from the preceding deterministic example.

\subsection{Two-sample ridge-product law}

For the symmetric two-sample experiment, we use
\[
c_0=c_1=1/2,\qquad \tau=2.
\]
The normalized subordination calculation yields the extrapolated estimate
\[
b_2^{(2)}(1/2,1/2)\simeq0.132896.
\]
Table~\ref{tab:two-sample-convergence} compares the normalized action and ridge-product spectrum with their limiting references.  Let $\overline A_p$ denote the Monte Carlo mean of $p^{-1}\Acal_{\rm cov,\tau}(S_0,S_1)$ and let $\widetilde\lambda_\varepsilon$ be the normalized finite-$\varepsilon$ subordination reference.  Both the action discrepancy and the mean $W_1(F^{\mathcal R_p},\widetilde\lambda_\varepsilon)$ decrease with dimension over the reported range.  Because the reference uses a fixed boundary regularization, the $W_1$ values combine finite-$p$ and numerical errors and are not interpreted as a convergence rate.

\begin{table}[ht]
\centering
\caption{Finite-size convergence of the two-sample normalized action and ridge-product ESD for $c_0=c_1=1/2$ and $\tau=2$.  Parentheses contain MCSEs; the last column reports $10^3 W_1(F^{\mathcal R_p},\widetilde\lambda_\varepsilon)$ for the fixed finite-$\varepsilon$ reference.}
\label{tab:two-sample-convergence}
{\small
\begin{tabular}{@{}rrrrrr@{}}
\toprule
$p$ & $n_0=n_1$ & reps. & $\overline A_p$ & $|\overline A_p-b_2^{(2)}|$ & $10^3 W_1$\\
\midrule
80  & 160 & 120 & 0.134212 (0.000270) & 0.001316 & 3.373 (0.096)\\
160 & 320 & 100 & 0.133468 (0.000143) & 0.000573 & 1.880 (0.048)\\
320 & 640 & 60  & 0.133294 (0.000089) & 0.000398 & 1.375 (0.040)\\
\bottomrule
\end{tabular}}
\end{table}

Figure~\ref{fig:two-sample-density-support} overlays the fixed-$\varepsilon$ subordination approximation with the pooled $p=320$ ridge-product spectrum.  For $(c,r)=(1/2,1)$, Remark~\ref{rem:algebraic-edges} identifies
\[
E_-\simeq0.019375,\qquad E_+\simeq0.438421.
\]
Proposition~\ref{prop:strong-ridge-product} identifies these as the almost-sure limits of the extreme eigenvalues; at $p=320$, their Monte Carlo means are $0.020330$ (MCSE $0.000131$) and $0.435221$ (MCSE $0.000412$).  Finite-$\varepsilon$ smoothing rounds the edge profile, so the figure diagnoses the bulk and support rather than the edge exponent.

\begin{figure}[ht]
\centering
\includegraphics[width=0.90\linewidth]{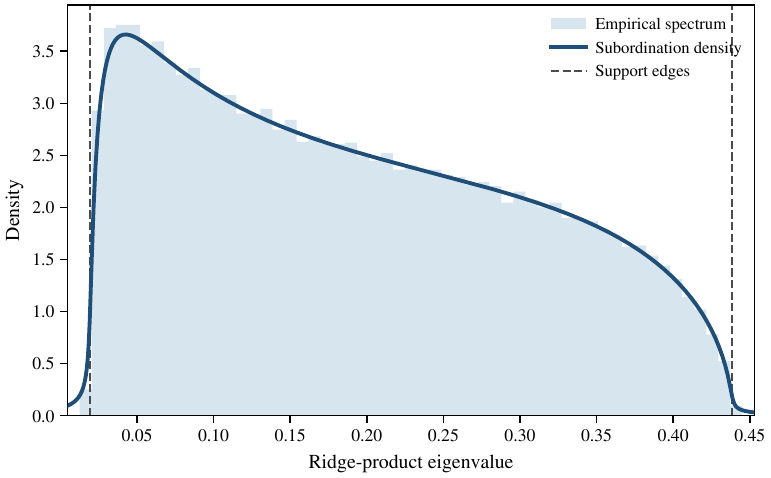}
\caption{Two-sample ridge-product spectrum and fixed-$\varepsilon$ approximation to the limiting law for $c_0=c_1=1/2$ and $\tau=2$.  The histogram shows the pooled $p=320$ empirical eigenvalues, the solid curve is the Stieltjes-inversion approximation at $\varepsilon=1.5\times10^{-3}$, and the dashed lines denote the physical support endpoints $E_-$ and $E_+$.}
\label{fig:two-sample-density-support}
\end{figure}

Independent algebraic-branch reconstruction, an asymmetric aspect-ratio check, and boundary-regularization and grid-sensitivity diagnostics are reported in the Supplementary Numerical Material.

\FloatBarrier
\section{Conclusion}\label{sec:conclusion}

Under equal penalties, Gaussian KL-UOT reduces to a nonlinear ridge-product spectral functional.  For independent real Wishart samples, this yields all-aspect free multiplicative-convolution limits and Hausdorff spectral convergence; in the symmetric nonsingular model, the algebraic physical branch identifies the support, square-root edges, and limiting extreme eigenvalues.  Independent Haar orientations extend the first-order law to deformed populations, while the one-sample problem provides an all-aspect Marchenko--Pastur benchmark with real-Gaussian Bai--Silverstein calibration for $c<1$.

The positive-semidefinite continuation separates these random-matrix limits from singular-Gaussian measure theory: it remains finite for $c_i\ge1$, whereas the original Gaussian KL-UOT interpretation is retained only for positive-definite inputs.  The mass formula also connects spectral bias to penalty scaling.  Both deterministic dense discrepancies and identity-population sample noise produce an $O(p)$ action and hence the critical scale $\tau_p\asymp p$, with the normalized action as the stable high-dimensional quantity.

Two extensions require genuinely multi-matrix methods.  General noncommuting population pairs call for block linearization, matrix Dyson equations, or operator-valued free deterministic equivalents~\cite{SpeicherVargas2012}; second-order fluctuations of the two-sample ridge-product log determinants point toward real second-order freeness~\cite{Redelmeier2014}.  Both lie beyond the scalar LSS arguments used here.

The numerical results are fully reproducible: no external data are used, and the Supplementary Numerical Material contains numerical diagnostics, the representative power study, and complete reproduction code and environment information.

\appendix

\section{Finite-dimensional Gaussian derivations}\label{app:gaussian-derivations}
The finite-dimensional Gaussian formulas in Proposition~\ref{prop:gaussian-ingredients} are recalled in the main text only as inputs to the random-matrix analysis.  For completeness, we record their derivations here.

We use the notation of Section~\ref{sec:spectral}.  Thus
\[
T=\tau_0+\tau_1,\qquad w_i=\tau_i/T,\qquad r_i=2/\tau_i,
\]
\[
\mu_i=\N(m_i,\Sigma_i),\qquad A_i=\Sigma_i^{-1},\qquad C_i=A_i+r_iI,
\]
\[
\kappa=r_1-r_0,\qquad
B=C_1^{1/2}C_0C_1^{1/2},\qquad
s(t)=\frac{\kappa+\sqrt{\kappa^2+4t}}{2},\qquad \mathsf S=s(B).
\]

We first separate the total mass from the normalized coupling.  Write a nonzero coupling as $\gamma=M\pi$, where $M>0$ and $\pi$ is a probability coupling of probability measures $\nu_0,\nu_1$.  The finite-measure relative entropy satisfies
\[
\KL(M\nu_0\mid a\mu_0)
=M\KL(\nu_0\mid\mu_0)+M\log(M/a)-M+a,
\]
and analogously for the second marginal.  For fixed shapes, the objective is therefore a strictly convex function of $M$, whose minimizer is
\[
M(\nu_0,\nu_1)
=a^{w_0}b^{w_1}
\exp\!\left\{-\frac{\Acal_{\tau_0,\tau_1}(\nu_0,\nu_1;\mu_0,\mu_1)}{T}\right\}.
\]
Substitution gives $\tau_0a+\tau_1b-TM(\nu_0,\nu_1)$.  Minimizing over the shapes yields the mass-separation formula and the optimal mass stated in Proposition~\ref{prop:gaussian-ingredients}.  The zero coupling has the strictly larger value $\tau_0a+\tau_1b$.

We next reduce the shape minimization to Gaussian marginals and verify attainment.  Let $\nu_i$ have mean $u_i$ and positive-definite covariance $P_i$, and let $g_i$ be the Gaussian with the same first two moments.  The Gelbrich inequality gives
\[
W_2^2(\nu_0,\nu_1)\ge W_2^2(g_0,g_1).
\]
Because the Gaussian references are nondegenerate, finite $\KL(\nu_i\mid\mu_i)$ implies that $\nu_i$ is absolutely continuous with respect to Lebesgue measure and therefore cannot have singular covariance.  Moreover, $\log(dg_i/d\mu_i)$ is quadratic, so equality of the first two moments gives the Pythagorean identity
\[
\KL(\nu_i\mid\mu_i)
=\KL(\nu_i\mid g_i)+\KL(g_i\mid\mu_i)
\ge \KL(g_i\mid\mu_i).
\]
Thus Gaussian moment projection cannot increase the action, and equality at a finite-objective minimizer forces $\nu_i=g_i$.

For $P\succ0$, the centered Gaussian KL divergence is
\[
\KL\!\left(\N(0,P)\mid\N(0,\Sigma)\right)
=\frac12\{\tr(\Sigma^{-1}P)-\log\det(\Sigma^{-1}P)-p\}.
\]
If $C=\Sigma^{-1/2}P\Sigma^{-1/2}$, this is one half of $\sum_j\phi(\lambda_j(C))$ with $\phi(t)=t-\log t-1$.  Since $\phi(t)\to\infty$ as $t\downarrow0$ or $t\to\infty$, finite KL sublevel sets confine every covariance eigenvalue to a compact interval bounded away from zero and infinity.  Together with the nonnegative Bures term this gives compact covariance sublevel sets after the boundary is assigned value $+\infty$, hence an interior minimizer.

The mean contribution follows from a strictly convex quadratic problem.  Its first-order conditions are
\[
u-m_0=-r_0\Sigma_0(u-v),\qquad
v-m_1=r_1\Sigma_1(u-v).
\]
With $\delta=m_0-m_1$ and $h=u-v$, subtraction yields
\[
[I+r_0\Sigma_0+r_1\Sigma_1]h=\delta.
\]
Consequently
\[
h_*=[I+r_0\Sigma_0+r_1\Sigma_1]^{-1}\delta,
\]
\[
u_*=m_0-r_0\Sigma_0h_*,\qquad
v_*=m_1+r_1\Sigma_1h_*,
\]
and direct substitution gives
\[
\Acal_{\rm mean}
=\delta^\top[I+r_0\Sigma_0+r_1\Sigma_1]^{-1}\delta.
\]

For the covariance contribution, let $L\succ0$ be the optimal Gaussian transport map, so that $Q=LPL$.  The Bures--Wasserstein differential on the positive-definite cone is
\[
D_Pd_B^2(P,Q)[H]=\tr[(I-L)H],\qquad
D_Qd_B^2(P,Q)[K]=\tr[(I-L^{-1})K].
\]
Adding the Gaussian KL derivatives at the interior minimizer gives
\[
P^{-1}=A_0+r_0(I-L),\qquad
Q^{-1}=A_1+r_1(I-L^{-1}).
\]
Using $Q^{-1}=L^{-1}P^{-1}L^{-1}$ and eliminating $P,Q$ yields
\[
LC_1L-\kappa L=C_0.
\]
Set $X=C_1^{1/2}LC_1^{1/2}$.  Then
\[
X^2-\kappa X=B.
\]
Because $B=X^2-\kappa X$ is a polynomial in the positive-definite matrix $X$, the two matrices commute.  Each scalar eigenvalue equation $x^2-\kappa x=b$ with $b>0$ has exactly one positive root,
\[
x=s(b)=\frac{\kappa+\sqrt{\kappa^2+4b}}{2}.
\]
Hence $X=s(B)=\mathsf S$ and
\[
L=C_1^{-1/2}\mathsf S C_1^{-1/2}.
\]
The positive Riccati branch uniquely determines $L$, after which the stationarity equations uniquely determine $P$ and $Q$.  This establishes the finite-dimensional Gaussian ingredients used in Section~\ref{sec:spectral}.

\section{Local and uniform expansions}\label{app:technical}

\subsection{Local Gaussian geometry}\label{app:local-geometry}
The next results describe the fixed-dimensional second-order geometry.  They are logically separate from the dimension-uniform expansion in Appendix~\ref{app:uniform-expansions}, which is derived directly from the exact ridge representation.

Put
\[
r_i=\frac{2}{\tau_i},\qquad
\mathcal L_\Sigma(X)=\Sigma X+X\Sigma,
\qquad
\mathcal K_\Sigma(X)=\Sigma X+X\Sigma+(r_0+r_1)\Sigma X\Sigma.
\]

\begin{lemma}\label{lem:local-cov-expansion}
Fix $p$ and $\Sigma\in\mathbb S_{++}^p$.  If $X,Y,K_0,K_1$ range over operator-norm-bounded subsets of the symmetric matrices, then, uniformly as $\varepsilon\to0$,
\begin{align}
d_B^2(\Sigma+\varepsilon X,\Sigma+\varepsilon Y)
&=\frac{\varepsilon^2}{2}\ip{X-Y}{\mathcal L_\Sigma^{-1}(X-Y)}+O(\varepsilon^3),\label{eq:bures-local}\\
\KL\!\left(\N(0,\Sigma+\varepsilon X)\mid\N(0,\Sigma+\varepsilon K_i)\right)
&=\frac{\varepsilon^2}{4}\tr[\Sigma^{-1}(X-K_i)\Sigma^{-1}(X-K_i)]+O(\varepsilon^3).\label{eq:kl-local}
\end{align}
The second-order covariance action is the minimum of the quadratic model obtained from \eqref{eq:bures-local}--\eqref{eq:kl-local}.
\end{lemma}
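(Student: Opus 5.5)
Both expansions are fixed-dimensional Taylor expansions with remainders uniform over bounded directions. I will prove \eqref{eq:kl-local} first, since it is a direct computation, and then handle \eqref{eq:bures-local} through the Bures square-root map.

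For \eqref{eq:kl-local}, write $P=\Sigma+\varepsilon X$ and $\Sigma'=\Sigma+\varepsilon K_i$. Use the closed form
\[
\KL(\N(0,P)\mid\N(0,\Sigma'))=\tfrac12\{\tr(\Sigma'^{-1}P)-\log\det(\Sigma'^{-1}P)-p\}.
\]
With $M=\Sigma'^{-1}P-I=\varepsilon\Sigma'^{-1}(X-K_i)$, the expansion $\tr M-\log\det(I+M)=\tfrac12\tr M^2+O(\|M\|^3)$ holds. The implied constant depends only on $p$ and on a bound for $\|M\|_{\op}$, and $\|M\|_{\op}=O(\varepsilon)$ uniformly because $\Sigma'^{-1}$ is uniformly bounded for small $\varepsilon$. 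Finally, replacing $\Sigma'^{-1}$ by $\Sigma^{-1}$ inside $\tr M^2$ costs only $O(\varepsilon^3)$, which gives \eqref{eq:kl-local}.

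For \eqref{eq:bures-local}, use $d_B^2(A,B)=\tr A+\tr B-2\tr(A^{1/2}BA^{1/2})^{1/2}$ and set $F(A,B)=d_B^2(A,B)$, which is real-analytic near $(\Sigma,\Sigma)$ on $\mathbb S_{++}^p\times\mathbb S_{++}^p$. The key facts are that $F\ge0$ and $F(A,A)=0$. Hence along the diagonal the value and first derivative of $F$ vanish, so the Taylor expansion at $(\Sigma,\Sigma)$ begins at second order. Its quadratic form depends only on $X-Y$: differentiating $F(\Sigma+\varepsilon X,\Sigma+\varepsilon X)\equiv0$ twice kills the diagonal directions, and combined with nonnegativity this forces the Hessian to vanish on them. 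Analyticity then makes the cubic remainder uniform over bounded $X,Y$.

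To identify the Hessian, I will compute $F(\Sigma,\Sigma+\varepsilon Z)$ with $Z=Y-X$, using the transport map. The optimal map is $L=I+\varepsilon\Delta+O(\varepsilon^2)$ with $\Sigma+\varepsilon Z=L\Sigma L$, so $\Delta\Sigma+\Sigma\Delta=Z$, that is, $\Delta=\mathcal L_\Sigma^{-1}(Z)$. Since $d_B^2=\tr[(L-I)\Sigma(L-I)]$, this gives
\[
d_B^2=\varepsilon^2\tr(\Delta\Sigma\Delta)+O(\varepsilon^3).
\]
Because $\Delta$ is symmetric, $\tr(\Delta\Sigma\Delta)=\tfrac12\tr(\Delta\,\mathcal L_\Sigma(\Delta))=\tfrac12\ip{Z}{\mathcal L_\Sigma^{-1}(Z)}$, which is the claimed form.

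The main obstacle is the transport-map expansion together with uniformity of the remainder. I will handle it by the implicit function theorem applied to $(L,\varepsilon)\mapsto L\Sigma L-\Sigma-\varepsilon Z$ at $L=I$. Its linearization in $L$ is the map $\mathcal L_\Sigma$, which is invertible on symmetric matrices because $\Sigma\succ0$. The solution $L$ is positive definite for small $\varepsilon$ and is therefore the unique positive transport map; analytic dependence on $(\varepsilon,Z)$ with $Z$ in a compact set gives uniform $O(\varepsilon^3)$ remainders.

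The last sentence of the lemma then holds by construction. Substituting \eqref{eq:bures-local}--\eqref{eq:kl-local} into the action \eqref{eq:normalized-action} at $\Sigma_i=\Sigma+\varepsilon K_i$ and minimizing over $(X,Y)$ gives the second-order covariance action. Minimizers lie in a bounded set by the $O(\varepsilon)$ localization of the KL sublevel sets, so the $O(\varepsilon^3)$ error stays uniform at the minimum.
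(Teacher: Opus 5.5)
Your proof is correct. Its skeleton matches the paper's: Taylor expansions with cubic remainders uniform over bounded directions, then a two-sided comparison of the exact and model minima at bounded perturbations. Two sub-arguments differ.

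For \eqref{eq:bures-local}, the paper only invokes analyticity of the principal square root and a Fr\'echet expansion, without computing the quadratic coefficient. You use $d_B^2\ge0$ and $d_B^2(A,A)=0$ to show the Hessian at $(\Sigma,\Sigma)$ is positive semidefinite with the diagonal directions $(X,X)$ in its kernel. This reduces everything to the one-sided perturbation $(0,Y-X)$. You then read off the coefficient from the Lyapunov linearization $\Delta\Sigma+\Sigma\Delta=Z$ of the Brenier map together with $d_B^2=\tr[(L-I)\Sigma(L-I)]$. This avoids second derivatives of the square root altogether, and it is what actually produces $\mathcal L_\Sigma^{-1}$.

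For the final assertion, the paper bounds the minimizing perturbations via the smooth Riccati representation of the exact minimizer. That route gives more (regular dependence of the minimizer on $\varepsilon$) but depends on having the Riccati solution. Your argument needs only the minimum value. The unadjusted pair $(\Sigma+\varepsilon K_0,\Sigma+\varepsilon K_1)$ shows the minimum is $O(\varepsilon^2)$, so each KL term is $O(\varepsilon^2)$. Coercivity of $t-\log t-1$ near $t=1$ then forces $\|P_*-\Sigma\|_{\op},\|Q_*-\Sigma\|_{\op}=O(\varepsilon)$.

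When you write it out, state the sandwich explicitly. The lower bound uses this localization of the exact minimizer. The upper bound evaluates the exact objective at the model minimizer, which is bounded because the quadratic model is strictly convex with minimizer linear in $(K_0,K_1)$.
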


\begin{proof}
Analyticity of the principal square-root map on the positive-definite cone gives \eqref{eq:bures-local} by Fr\'echet expansion.  Taylor expansions of inversion and $\log\det$ give \eqref{eq:kl-local}.  Let $q_*$ be the minimum of the resulting quadratic model and $C_\varepsilon^*$ the exact minimized covariance objective.  The positive Riccati representation above expresses the exact minimizer through smooth operations near the identity point, so the minimizing perturbations remain bounded.  Uniform third-order remainders therefore give
\[
\varepsilon^2q_*-C\varepsilon^3\le C_\varepsilon^*\le \varepsilon^2q_*+C\varepsilon^3,
\]
which proves the last assertion.
\end{proof}

\begin{theorem}[Mass--location--shape expansion]\label{thm:local-geometry}
Fix $\mathfrak m>0$, $\bar m\in\R^p$ and $\Sigma\in\mathbb S_{++}^p$.  For $i=0,1$, let
\[
\rho_{i,\varepsilon}
=(\mathfrak m+\varepsilon\dot a_i)
\N(\bar m+\varepsilon h_i,\Sigma+\varepsilon \Delta_i),
\]
where $\Delta_i$ are symmetric and $\varepsilon$ is small enough that the masses and covariances remain admissible.  Then
\begin{align}
\Ucal_{\tau_0,\tau_1}(\rho_{0,\varepsilon},\rho_{1,\varepsilon})
={}&\varepsilon^2\frac{\tau_0\tau_1}{2\mathfrak m(\tau_0+\tau_1)}(\dot a_0-\dot a_1)^2\notag\\
&+\varepsilon^2\mathfrak m
(h_0-h_1)^\top[I+(r_0+r_1)\Sigma]^{-1}(h_0-h_1)\notag\\
&+\frac{\varepsilon^2\mathfrak m}{2}
\ip{\Delta_0-\Delta_1}{\mathcal K_\Sigma^{-1}(\Delta_0-\Delta_1)}
+o(\varepsilon^2).\label{eq:local-full}
\end{align}
If $\Sigma=\operatorname{diag}(\lambda_1,\ldots,\lambda_p)$, then the covariance term equals
\[
\frac{\varepsilon^2\mathfrak m}{2}
\sum_{j,k=1}^p
\frac{|(\Delta_0-\Delta_1)_{jk}|^2}
{\lambda_j+\lambda_k+(r_0+r_1)\lambda_j\lambda_k}
+o(\varepsilon^2).
\]
\end{theorem}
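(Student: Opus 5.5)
The plan is to reduce the full problem to three decoupled second-order pieces: mass, mean, and covariance. Mass separation (Proposition~\ref{prop:gaussian-ingredients}(i)) gives
\[
\Ucal=\tau_0a+\tau_1b-TM_*,\qquad M_*=a^{w_0}b^{w_1}e^{-\Acal_*/T},
\]
with $a=\mathfrak m+\varepsilon\dot a_0$ and $b=\mathfrak m+\varepsilon\dot a_1$. Theorem~\ref{thm:exact-spectral} splits $\Acal_*=\Acal_{\rm mean}+\Acal_{\rm cov}$, and both pieces are $O(\varepsilon^2)$.
\begin{itemize}
\item The mean part is $\Acal_{\rm mean}=O(\varepsilon^2)$ because $\delta=\varepsilon(h_0-h_1)$.
\item The covariance part is $O(\varepsilon^2)$ by Lemma~\ref{lem:local-cov-expansion}: the quadratic model vanishes at $\Delta_0=\Delta_1$.
\end{itemize}
Hence $e^{-\Acal_*/T}=1-\Acal_*/T+O(\varepsilon^4)$, and to order $\varepsilon^2$ we get
\[
\Ucal=\bigl[\tau_0a+\tau_1b-Ta^{w_0}b^{w_1}\bigr]+a^{w_0}b^{w_1}\Acal_*+o(\varepsilon^2).
\]
In the second term we may replace $a^{w_0}b^{w_1}$ by $\mathfrak m$, since $\Acal_*=O(\varepsilon^2)$.

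The bracket is the weighted AM--GM gap $T[w_0a+w_1b-a^{w_0}b^{w_1}]$. We Taylor-expand $a^{w_0}b^{w_1}$ about $(\mathfrak m,\mathfrak m)$. The first-order terms cancel the arithmetic mean exactly. The Hessian of $(a,b)\mapsto a^{w_0}b^{w_1}$ at the diagonal point is
\[
\frac{w_0w_1}{\mathfrak m}\begin{pmatrix}-1&1\\1&-1\end{pmatrix}.
\]
So the gap is $\frac{T w_0w_1}{2\mathfrak m}\varepsilon^2(\dot a_0-\dot a_1)^2+O(\varepsilon^3)$, and $Tw_0w_1=\tau_0\tau_1/(\tau_0+\tau_1)$. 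This gives the first line of \eqref{eq:local-full}.

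For the mean term, substitute $\delta=\varepsilon(h_0-h_1)$ into \eqref{eq:mean-action} with covariances $\Sigma+\varepsilon\Delta_i$. Replacing $[I+r_0\Sigma_0+r_1\Sigma_1]^{-1}$ by $[I+(r_0+r_1)\Sigma]^{-1}$ costs $O(\varepsilon)$ in operator norm, hence $O(\varepsilon^3)$ in the action. Multiplying by $\mathfrak m$ gives the second line.

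For the covariance term, the covariance action depends only on the covariances, not on the means. We therefore write $\Sigma_i=\Sigma+\varepsilon\Delta_i$ and use the quadratic model of Lemma~\ref{lem:local-cov-expansion}, which is exact to $O(\varepsilon^3)$. We minimize over $P=\Sigma+\varepsilon X$ and $Q=\Sigma+\varepsilon Y$ the quadratic form
\[
\tfrac12\langle X-Y,\mathcal L_\Sigma^{-1}(X-Y)\rangle+\tfrac{\tau_0}{4}\|\Sigma^{-1/2}(X-\Delta_0)\Sigma^{-1/2}\|_F^2+\tfrac{\tau_1}{4}\|\Sigma^{-1/2}(Y-\Delta_1)\Sigma^{-1/2}\|_F^2 .
\]
This is a series combination of three quadratic ``springs''. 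By the standard parallel-sum identity for minimizing $\langle u,Au\rangle+\langle v,B_0v\rangle+\langle w,B_1w\rangle$ subject to $u+v+w=\Delta_0-\Delta_1$, the minimum is
\[
\bigl\langle d,(A^{-1}+B_0^{-1}+B_1^{-1})^{-1}d\bigr\rangle,\qquad d=\Delta_0-\Delta_1 .
\]
Here $A^{-1}$ corresponds to $2\mathcal L_\Sigma$, and $B_i^{-1}$ corresponds to $\frac{4}{\tau_i}\Sigma(\cdot)\Sigma=2r_i\Sigma(\cdot)\Sigma$. The sum is therefore $2\mathcal K_\Sigma$, which yields $\frac12\langle d,\mathcal K_\Sigma^{-1}d\rangle$. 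Multiplying by $\mathfrak m$ gives the third line.

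The diagonal formula follows because $\mathcal K_\Sigma$ acts entrywise on matrix units: its eigenvalue on $E_{jk}$ is $\lambda_j+\lambda_k+(r_0+r_1)\lambda_j\lambda_k$.

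The main obstacle is the series-spring minimization. One must check that the constants match the normalization in \eqref{eq:kl-local}, with KL weight $\tau_i/4$ and the Bures factor $\frac12$. One must also check the uniformity of the $O(\varepsilon^3)$ remainders, which rests on the boundedness of the minimizing perturbations already asserted in Lemma~\ref{lem:local-cov-expansion}.
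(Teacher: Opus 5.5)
Your proposal is correct and follows the same route as the paper. You expand the mass-separation identity (your weighted AM--GM Hessian computation supplies detail the paper leaves implicit), read off the mean term from the exact resolvent formula, and evaluate the covariance term as the minimum of the quadratic model from Lemma~\ref{lem:local-cov-expansion}, absorbing the prefactor $a^{w_0}b^{w_1}=\mathfrak m+O(\varepsilon)$ because $\Acal_*=O(\varepsilon^2)$. The only cosmetic difference is that you compute that minimum by the basis-free parallel-sum (series-spring) identity, getting $\tfrac12\ip{d}{\mathcal K_\Sigma^{-1}d}$ first and then diagonalizing, whereas the paper diagonalizes $\Sigma$ and solves the entrywise scalar problems directly; the constants agree.
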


\begin{proof}
Expanding the mass-separation identity gives the first term, while the exact mean-resolvent formula gives the second.  By Lemma~\ref{lem:local-cov-expansion}, the covariance term is the minimum of
\begin{align*}
Q_\Sigma(X,Y)={}&\frac12\ip{X-Y}{\mathcal L_\Sigma^{-1}(X-Y)}
+\frac{\tau_0}{4}\tr[\Sigma^{-1}(X-\Delta_0)\Sigma^{-1}(X-\Delta_0)]\\
&+\frac{\tau_1}{4}\tr[\Sigma^{-1}(Y-\Delta_1)\Sigma^{-1}(Y-\Delta_1)].
\end{align*}
After diagonalizing $\Sigma$, each matrix entry reduces to a scalar strictly convex quadratic problem.  Its minimum is
\[
\frac12\frac{|(\Delta_0-\Delta_1)_{jk}|^2}
{\lambda_j+\lambda_k+(r_0+r_1)\lambda_j\lambda_k}.
\]
Summation gives the covariance term in \eqref{eq:local-full}.  Since the spatial action is $O(\varepsilon^2)$, the mass-separation prefactor is $\mathfrak m+o(1)$.
\end{proof}

\begin{remark}
If $\tau_0=\tau_1=\tau$ and $\Sigma=I$, then
\begin{align*}
\Ucal_{\tau,\tau}
={}&\varepsilon^2\frac{\tau}{4\mathfrak m}(\dot a_0-\dot a_1)^2
+\varepsilon^2\frac{\mathfrak m}{1+4/\tau}\|h_0-h_1\|^2\\
&+\varepsilon^2\frac{\mathfrak m}{4(1+2/\tau)}\normF{\Delta_0-\Delta_1}^2
+o(\varepsilon^2).
\end{align*}
\end{remark}

\subsection{Uniform and large-penalty expansions}\label{app:uniform-expansions}
\begin{lemma}[Uniform identity expansion]\label{lem:uniform-local-covariance}
Fix $\tau>0$ and set $r=2/\tau$.  There exist $\delta_0,C_\tau>0$, independent of $p$, such that for all symmetric $E_0,E_1$ satisfying
\[
\max_{i=0,1}\|E_i\|_{\rm op}\le\delta\le\delta_0,
\]
one has
\begin{equation}\label{eq:uniform-local-remainder}
\left|
\frac1p\Acal_{\rm cov,\tau}(I+E_0,I+E_1)
-\frac{1}{4(1+2/\tau)}\frac1p\tr(E_0-E_1)^2
\right|
\le C_\tau\delta^3.
\end{equation}
\end{lemma}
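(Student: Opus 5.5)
The plan is to work directly from the exact ridge-product identity of Corollary~\ref{cor:ridge-product} rather than from the variational problem. That identity writes $\Acal_{\rm cov,\tau}(I+E_0,I+E_1)$ as a sum of three log-determinants. Each is a trace of a scalar function evaluated at a matrix that is an operator-norm-small perturbation of a multiple of the identity. Once every term is written as $\tr$ of an analytic function, the factor $p$ appears only through traces. Remainders of the form $\tr(\text{cubic})$ are then bounded by $p\,\delta^3$ times a constant that depends only on $\tau$. That gives \eqref{eq:uniform-local-remainder} with $C_\tau$ independent of $p$.

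\textbf{Step 1 (ridge matrices).} Set $q=r/(1+r)$. Since $h_r$ is analytic near $1$, a resolvent expansion gives
\[
R_i=qI+aE_i+bE_i^2+O_{\op}(\delta^3),
\qquad a=h_r'(1)=\frac{r}{(1+r)^2},\quad b=\tfrac12h_r''(1).
\]
The two marginal terms are also exact scalar traces:
\[
-\tfrac{\tau}{2}\log\det(I-R_i)=\tfrac{\tau}{2}\tr\log(I+r(I+E_i)).
\]
Expanding $\log(1+r+rx)$ to second order in $x$ gives the contribution
\[
\tfrac{\tau}{2}\Bigl[p\log(1+r)+\tfrac{r}{1+r}\tr E_i-\tfrac{r^2}{2(1+r)^2}\tr E_i^2\Bigr]+O(p\delta^3).
\]

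\textbf{Step 2 (the joint term).} This is the main obstacle, because the matrices do not commute. I will avoid expanding the square root of a product directly. Instead I use the identity from the proof of Corollary~\ref{cor:ridge-product}, namely $\mathcal R^{1/2}=r\mathsf S^{-1}$ with
\[
\mathsf S=(C_1^{1/2}C_0C_1^{1/2})^{1/2},\qquad C_i=(I+E_i)^{-1}+rI .
\]
It follows that
\[
\log\det(I-\mathcal R^{1/2})=\log\det(\mathsf S-rI)-\tfrac12\log\det C_0-\tfrac12\log\det C_1 .
\]
The last two terms are again scalar traces. For $\mathsf S$, write $C_i=(1+r)I+F_i$ with $F_i=-E_i+E_i^2+O(\delta^3)$. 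Expand the Bures-type mean $\mathsf S$ to second order using the Fréchet expansion of the square root at a multiple of the identity; this is legitimate uniformly in $p$ because the Sylvester operator $X\mapsto \kappa X+X\kappa$ at $\kappa=(1+r)I$ is just multiplication by $2(1+r)$. This gives
\[
\mathsf S=(1+r)I+\tfrac12(F_0+F_1)+\frac{1}{8(1+r)}\Bigl[(F_0F_1+F_1F_0)-F_0^2-F_1^2\Bigr]+O_{\op}(\delta^3),
\]
where the second-order coefficient comes from $C_1^{1/2}C_0C_1^{1/2}$ and the square-root correction. Only the trace of this expression enters $\log\det(\mathsf S-rI)=\tr\log(I+(\mathsf S-(1+r)I))$. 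Cyclicity then makes the noncommutative second-order pieces reduce to $\tr F_0^2$, $\tr F_1^2$ and $\tr(F_0F_1)$. The operator-norm remainders turn into $O(p\delta^3)$ via $|\tr X|\le p\|X\|_{\op}$ and a bound of the type \eqref{eq:logdet-uniform-bound}.

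\textbf{Step 3 (collect terms).} Add the pieces with weights $\tau/2,\tau/2,\tau$. The constant terms and the linear terms $\tr E_i$ must cancel, which is forced by the equality case of Corollary~\ref{cor:ridge-product} at $E_0=E_1$ combined with symmetry. The quadratic form must depend on $E_0-E_1$ only, since the action vanishes whenever $E_0=E_1$ and the expansion is a quadratic polynomial plus $O(p\delta^3)$. Its coefficient is then fixed by the one-sample case $E_0=0$ and Proposition~\ref{prop:f-tau}, where \eqref{eq:f-local} gives $\frac{1}{4(1+2/\tau)}\tr E_1^2$. I would still verify this arithmetic directly as a check. Finally, choose $\delta_0$ small enough (depending only on $r$) that all spectra stay in a fixed neighbourhood where the scalar Taylor remainders are uniform. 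The remainder is then $C_\tau p\delta^3$, and dividing by $p$ gives \eqref{eq:uniform-local-remainder}.
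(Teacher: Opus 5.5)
Your proposal is correct, and the arithmetic you defer does close. With $\kappa=1+r$ and your expansions, the constants cancel ($\tau p\log\kappa-\tau p\log\kappa=0$). The linear coefficient is $\tau\bigl(\frac{r}{2\kappa}-\frac12+\frac1{2\kappa}\bigr)=0$. The quadratic part collapses to $\frac{\tau}{8}\bigl(1-\frac1\kappa\bigr)\tr(E_0-E_1)^2=\frac{1}{4(1+r)}\tr(E_0-E_1)^2$, so your symmetry and one-sample identification in Step~3 is only a cross-check.

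The paper shares your starting point, the exact ridge representation, and your identification of the quadratic term: exchange symmetry, $F_p(E,E)=0$, and the scalar expansion of $f_\tau$ followed by polarization. It controls the remainder differently. It keeps the product $R(E_1)^{1/2}R(E_0)R(E_1)^{1/2}$ and the function $x\mapsto\log(1-\sqrt x)$, and proves dimension-free bounds on the first three Fr\'echet derivatives of $F_p=\frac1p\Acal_{\rm cov,\tau}(I+\cdot,I+\cdot)$. These bounds hold throughout a spectral neighbourhood and come from contour-integral derivative formulas and the chain rule; Taylor's theorem with integral remainder then finishes.

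Your reparametrization through $\mathsf S$ and $\det\mathsf S=(\det C_0\det C_1)^{1/2}$ gives a more elementary argument. Because the base point is a multiple of the identity, every square root and logarithm becomes a binomial or logarithmic power series in one symmetric matrix. Its operator-norm tail is dominated by the scalar tail, and the noncommutativity is confined to finitely many explicit products. The Hessian is therefore computed rather than inferred.

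For the same reason, the uniform $O_{\op}(\delta^3)$ remainder in your expansion of $\mathsf S$ should be justified by writing $\mathsf S=\kappa(I+Y/\kappa^2)^{1/2}$ with $Y=C_1^{1/2}C_0C_1^{1/2}-\kappa^2I$ and using the binomial series. Your Sylvester-operator remark only controls the first-order term. Your Step~1 expansion of $R_i$ is also never used.

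What the paper's heavier route buys is generality. The contour-integral bounds do not use that the base point is scalar, so the same argument adapts to expansions around general well-conditioned covariance pairs. There your explicit computation would no longer reduce to commutative series.
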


\begin{proof}
Use the exact ridge-product representation \eqref{eq:ridge-formula} and write
\[
R(E)=r(I+E)\{I+r(I+E)\}^{-1},
\qquad
P(E_0,E_1)=R(E_1)^{1/2}R(E_0)R(E_1)^{1/2}.
\]
Choose $\delta_0<1/2$.  Then $I+E_i$ has spectrum in $[1/2,3/2]$, so the spectra of $R(E_i)$ lie in a compact interval $K_R\Subset(0,1)$ depending only on $r$, and, after decreasing $\delta_0$ if necessary, the spectrum of $P(E_0,E_1)$ lies in a compact interval $K_P\Subset(0,1)$ containing the base value $(r/(1+r))^2$.

On these fixed spectral sets, inversion, the principal square root, and the functions $x\mapsto\log(1-x)$ and $x\mapsto\log(1-\sqrt{x})$ have dimension-free Fr\'echet derivative bounds through order three.  Choose contours enclosing $K_R$ and $K_P$ at positive distance from the spectra.  For holomorphic $g$ and $1\le k\le3$, writing $R_z=(zI-A)^{-1}$ and
\[
\mathcal K_k(z;H_1,\ldots,H_k)
:=\sum_{\pi\in\mathfrak S_k}
R_zH_{\pi(1)}R_z\cdots H_{\pi(k)}R_z,
\]
one has
\[
D^kg(A)[H_1,\ldots,H_k]
=\frac{1}{2\pi i}\oint_\Gamma
 g(z)\mathcal K_k(z;H_1,\ldots,H_k)\,\dd z.
\]
Contour separation gives $\sup_{z\in\Gamma}\|R_z\|_{\rm op}\le C_\Gamma$, hence
\[
\|D^kg(A)[H_1,\ldots,H_k]\|_{\rm op}
\le C_{g,k,\Gamma}\prod_{j=1}^k\|H_j\|_{\rm op},
\]
with a constant independent of $p$.  Matrix multiplication and $E\mapsto R(E)$ satisfy analogous bounds.  With the product norm
\[
\|(E_0,E_1)\|_\times:=\max\{\|E_0\|_{\rm op},\|E_1\|_{\rm op}\},
\]
the chain rule gives a constant $C_\tau'$ such that, at every base point in the chosen neighborhood, the first three Fr\'echet derivatives of
\[
F_p(E_0,E_1):=\frac1p\Acal_{\rm cov,\tau}(I+E_0,I+E_1)
\]
have multilinear operator norm at most $C_\tau'$, uniformly in $p$.  The normalized trace introduces no dimension factor, since
\[
\frac1p|\tr A|\le\|A\|_{\rm op}.
\]
After decreasing $\delta_0$ if necessary, the segment $t(E_0,E_1)$, $0\le t\le1$, stays in the same spectral neighborhood, so the same constant controls the Taylor remainder.

At the origin, $F_p(0,0)=0$ and $DF_p(0,0)=0$.  We identify the Hessian directly from the exact ridge representation.  Exchange symmetry and the identity $F_p(E,E)=0$ for all sufficiently small symmetric $E$ imply that the quadratic Taylor term depends only on $E_0-E_1$.  Setting $E_0=0$ and $E_1=tH$, \eqref{eq:identity-action-lfs} and \eqref{eq:f-local} give
\[
F_p(0,tH)
=\frac1p\sum_{j=1}^p f_\tau(1+t\lambda_j(H))
=\frac{t^2}{4(1+r)}\frac1p\tr H^2+O(t^3),
\]
where $r=2/\tau$.  Polarization therefore yields
\[
\frac12D^2F_p(0,0)[(E_0,E_1),(E_0,E_1)]
=\frac{1}{4(1+r)}\frac1p\tr(E_0-E_1)^2.
\]
This agrees with the fixed-dimensional covariance Hessian in Appendix~\ref{app:local-geometry}, but the dimension-uniform estimate is derived independently.
Taylor's theorem with integral remainder and the uniform third-derivative bound now gives
\[
\left|F_p(E_0,E_1)
-\frac{1}{4(1+r)}\frac1p\tr(E_0-E_1)^2\right|
\le C_\tau\max_i\|E_i\|_{\rm op}^3,
\]
which is \eqref{eq:uniform-local-remainder} because $r=2/\tau$.
\end{proof}

\begin{lemma}\label{lem:large-tau-technical}
Under Assumption~\ref{ass:spectral-bounds}, with $r=2/\tau$, the ridge variables admit the uniform expansions, as $r\downarrow0$ (equivalently, $\tau\to\infty$),
\[
R_i=r\Sigma_i+O_{\op}(r^2),\qquad
\mathcal R^{1/2}=r(\Sigma_1^{1/2}\Sigma_0\Sigma_1^{1/2})^{1/2}+O_{\op}(r^2).
\]
\end{lemma}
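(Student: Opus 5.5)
The plan is to treat both expansions as consequences of holomorphic (or at least smooth) functional calculus on spectral sets that are fixed by Assumption~\ref{ass:spectral-bounds}. Every estimate will be in operator norm, so every constant depends only on $\underline\lambda,\overline\lambda$ and never on $p$.

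\textbf{First expansion.} Write
\[
R_i=r\Sigma_i(I+r\Sigma_i)^{-1}=r\Sigma_i-r^2\Sigma_i^2(I+r\Sigma_i)^{-1}.
\]
This follows from the resolvent identity $(I+r\Sigma_i)^{-1}=I-r\Sigma_i(I+r\Sigma_i)^{-1}$. Since $0\preceq(I+r\Sigma_i)^{-1}\preceq I$, the remainder satisfies
\[
\|r^2\Sigma_i^2(I+r\Sigma_i)^{-1}\|_{\op}\le\overline\lambda^2r^2.
\]
This gives $R_i=r\Sigma_i+O_{\op}(r^2)$ for every $r>0$, uniformly in $p$ and in the admissible $\Sigma_i$.

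\textbf{Second expansion.} Factor out $r$ and set $\widetilde R_i:=R_i/r=\Sigma_i(I+r\Sigma_i)^{-1}$. Then $\mathcal R^{1/2}=r\,(\widetilde R_1^{1/2}\widetilde R_0\widetilde R_1^{1/2})^{1/2}$, and it suffices to show
\[
(\widetilde R_1^{1/2}\widetilde R_0\widetilde R_1^{1/2})^{1/2}
=(\Sigma_1^{1/2}\Sigma_0\Sigma_1^{1/2})^{1/2}+O_{\op}(r).
\]
I would do this in three steps.

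\begin{enumerate}[label=(\alph*)]
\item For $r\le r_0:=1/\overline\lambda$, the spectrum of $\widetilde R_i$ lies in $[\underline\lambda/2,\overline\lambda]$. By the first step, $\|\widetilde R_i-\Sigma_i\|_{\op}\le\overline\lambda^2r$.
\item The map $X\mapsto X^{1/2}$ is Lipschitz in operator norm on $\{X:\ X\succeq aI\}$ with constant $(2\sqrt a)^{-1}$. This follows from the integral representation $X^{1/2}-Y^{1/2}=\frac{1}{\pi}\int_0^\infty \sqrt t\,[(Y+t)^{-1}-(X+t)^{-1}]\,\dd t$, or from the Sylvester-equation bound. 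Hence $\widetilde R_1^{1/2}=\Sigma_1^{1/2}+O_{\op}(r)$.
\item Multiplying bounded factors gives $\widetilde R_1^{1/2}\widetilde R_0\widetilde R_1^{1/2}=\Sigma_1^{1/2}\Sigma_0\Sigma_1^{1/2}+O_{\op}(r)$. Both sides are bounded below by $(\underline\lambda/2)^2I$, so applying the Lipschitz bound of step (b) once more gives the claim. Multiplying by $r$ then yields the $O_{\op}(r^2)$ statement.
\end{enumerate}

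For $r>r_0$ the statement is vacuous as an asymptotic claim: everything is bounded, since $\|\mathcal R^{1/2}\|_{\op}<1$ and the leading term is $O(r)$. The constants can therefore be enlarged to cover this range if a global bound is wanted.

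The only genuinely delicate point is the \emph{dimension-free} Lipschitz bound for the matrix square root, which is needed twice. A naive entrywise or Frobenius argument would introduce factors of $p$. The integral (or contour) representation with a uniform spectral gap from zero is what keeps the constant free of $p$. The same device appears in the proof of Lemma~\ref{lem:uniform-local-covariance}, where the contour-based Fréchet derivative bounds with a fixed spectral contour are used. I expect the rest to be routine bookkeeping of constants in $\underline\lambda$ and $\overline\lambda$.
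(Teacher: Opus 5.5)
Your proposal is correct and follows the paper's own argument. Like the paper, you rescale to $\widetilde R_i=\Sigma_i(I+r\Sigma_i)^{-1}=\Sigma_i+O_{\op}(r)$ with spectra in a fixed compact subset of $(0,\infty)$, carry the error through the product $\widetilde R_1^{1/2}\widetilde R_0\widetilde R_1^{1/2}$, and use the dimension-free Lipschitz bound for the square root on that spectral set; your exact identity $R_i=r\Sigma_i-r^2\Sigma_i^2(I+r\Sigma_i)^{-1}$ and the explicit second use of the Lipschitz bound for $\widetilde R_1^{1/2}$ just spell out steps the paper leaves implicit.
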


\begin{proof}
Set $\widetilde R_i=R_i/r=\Sigma_i(I+r\Sigma_i)^{-1}$.  Uniform spectral boundedness gives $\widetilde R_i=\Sigma_i+O_{\op}(r)$ and places the spectra of $\widetilde R_i$ in a fixed compact subset of $(0,\infty)$.  Therefore
\[
\frac{\mathcal R}{r^2}=\widetilde R_1^{1/2}\widetilde R_0\widetilde R_1^{1/2}
=\Sigma_1^{1/2}\Sigma_0\Sigma_1^{1/2}+O_{\op}(r).
\]
The square-root map is Lipschitz on this compact positive-definite spectral set, giving the expansion for $\mathcal R^{1/2}$.  The expansion of $R_i$ follows directly from the resolvent series for $(I+r\Sigma_i)^{-1}$.
\end{proof}

\section{Bai--Silverstein calibration and testing}\label{app:BS-calibration}
We record the Bai--Silverstein LSS formulas used in Corollary~\ref{cor:lss-clt}, with the sign and real/complex conventions needed for the calibration.  Throughout this appendix,
\[
m(z)=\int\frac{1}{x-z}\,\dd F(x),
\]
so $\Im m(z)>0$ for $z\in\mathbb C^+$.  For the identity population, the companion transform $\underline m_c$ satisfies
\begin{equation}\label{eq:BS-companion}
z=-\frac{1}{\underline m_c(z)}+\frac{c}{1+\underline m_c(z)},
\qquad \Im\underline m_c(z)>0,
\end{equation}
and $\underline m_c(z)=-(1-c)/z+c\,m_c(z)$.  The statistic in Corollary~\ref{cor:lss-clt} is centered by the Marchenko--Pastur law at aspect ratio $c_p$, denoted $F^{c_p}$, namely $p\int f\,\dd F^{c_p}$; the limiting mean and variance below are evaluated at $c=\lim c_p$.  Replacing the limiting functionals by their $c_p$ counterparts in the standardized statistic is justified by continuity in $c$ and Slutsky's theorem.

For the real Gaussian model, the excess fourth cumulant is zero, while the real ($\beta=1$) correction remains.  With the convention above, a convenient contour representation of the resulting limiting mean is
\begin{equation}\label{eq:BS-mean}
\mathfrak m_c^{\rm BS}(f)
=-\frac{1}{2\pi i}\oint_\Gamma
f(z)
\frac{c\,\underline m_c(z)^3(1+\underline m_c(z))^{-3}}
{\left[1-c\,\underline m_c(z)^2(1+\underline m_c(z))^{-2}\right]^2}
\,\dd z,
\end{equation}
and the covariance functional is
\begin{equation}\label{eq:BS-var}
\mathfrak v_c^{\rm BS}(f)
=-\frac{1}{2\pi^2}
\oint_{\Gamma_1}\oint_{\Gamma_2}
f(z_1)f(z_2)
\frac{\underline m_c'(z_1)\underline m_c'(z_2)}
{[\underline m_c(z_1)-\underline m_c(z_2)]^2}
\,\dd z_1\dd z_2.
\end{equation}
Here all contours are positively oriented and enclose the Marchenko--Pastur support $[a_c,b_c]$ without enclosing the origin.  In \eqref{eq:BS-var}, $\Gamma_1$ and $\Gamma_2$ are taken disjoint and nested, so the denominator is never evaluated on the diagonal.  These formulas are the real Gaussian identity specialization of the analytic LSS CLT in~\cite{BaiSilverstein2004,BaiSilverstein2010}; the stated contour choices also fix the orientation convention used by the numerical implementation.

An independent check of \eqref{eq:BS-mean}--\eqref{eq:BS-var}, together with the positivity argument, comes from the Marchenko--Pastur Joukowski parametrization
\begin{equation}\label{eq:joukowski-mp}
x_c(\theta)=1+c-2\sqrt c\cos\theta,
\qquad 0\le\theta\le2\pi.
\end{equation}
Define
\begin{equation}\label{eq:fourier-coeff}
\widehat f_k
=\frac{1}{2\pi}\int_0^{2\pi}f(x_c(\theta))e^{-ik\theta}\,\dd\theta.
\end{equation}
In the same real Gaussian normalization, the Joukowski representation gives the equivalent mean formula
\begin{equation}\label{eq:BS-joukowski-mean}
\mathfrak m_c^{\rm BS}(f)
=\frac{f(a_c)+f(b_c)}{4}
-\frac{1}{2\pi}\int_0^\pi f(x_c(\theta))\,\dd\theta,
\end{equation}
and the corresponding variance formula
\begin{equation}\label{eq:BS-joukowski-var}
\mathfrak v_c^{\rm BS}(f)
=2\sum_{k=1}^{\infty} k\,|\widehat f_k|^2.
\end{equation}
Equation~\eqref{eq:BS-joukowski-var} is nonnegative and vanishes only when $f\circ x_c$ is constant.  Because $x_c([0,\pi])=[a_c,b_c]$, every analytic nonconstant $f$ on the Marchenko--Pastur support has strictly positive variance, proving the final assertion of Corollary~\ref{cor:lss-clt}.

For $f=f_\tau$, the support satisfies $a_c>0$ because $c<1$.  The principal square root is analytic on a neighborhood of $[a_c,b_c]$, and the argument
\[
\sqrt{(1+r)(1+rz)}-r\sqrt z
\]
is positive on the real support and has no zeros on a sufficiently small complex neighborhood of it.  The principal logarithm therefore defines an analytic continuation of $f_\tau$ around the support, as required by the LSS CLT.  The supplementary code evaluates \eqref{eq:BS-mean}--\eqref{eq:BS-var} and \eqref{eq:BS-joukowski-mean}--\eqref{eq:BS-joukowski-var} independently; at $(c,\tau)=(1/2,2)$ the two calculations agree to the reported tolerance.

\subsection{Testing consequences}\label{app:one-sample-consequences}
For the identity-covariance test in Section~\ref{sec:test}, write
\[
T_{p,n}^{(\tau)}=\sum_{j=1}^p f_\tau(\lambda_j(S_p)),
\]
and
\[
Z_{p,n}^{(\tau)}=
\frac{T_{p,n}^{(\tau)}-p b_{\tau,c_p}-\mathfrak m_{c_p}^{\rm BS}(f_\tau)}
{\sqrt{\mathfrak v_{c_p}^{\rm BS}(f_\tau)}}.
\]
Corollary~\ref{cor:lss-clt} gives $Z_{p,n}^{(\tau)}\Rightarrow N(0,1)$ under the exact real-Gaussian identity null.

\begin{proposition}[Separated-alternative consistency]\label{prop:test-consistency}
Under the Gaussian sampling model, let $F^{\Sigma_p}\Rightarrow H$ with population eigenvalues uniformly bounded above and away from zero.  If
\[
\Delta_{\tau,c,H}
:=\int f_\tau\,dF_{c,H}-b_{\tau,c}>0,
\]
then the one-sided level-$\alpha$ rule $\mathbf 1\{Z_{p,n}^{(\tau)}>z_{1-\alpha}\}$ has rejection probability tending to one.
\end{proposition}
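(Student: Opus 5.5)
The plan is to show that the standardized statistic $Z_{p,n}^{(\tau)}$ diverges to $+\infty$ in probability under the alternative. This reduces to a first-order comparison: $p^{-1}T_{p,n}^{(\tau)}$ stays bounded away, from above, from the null centering $b_{\tau,c_p}$. Write
\[
Z_{p,n}^{(\tau)}
=\frac{p\bigl(p^{-1}T_{p,n}^{(\tau)}-b_{\tau,c_p}\bigr)-\mathfrak m_{c_p}^{\rm BS}(f_\tau)}
{\sqrt{\mathfrak v_{c_p}^{\rm BS}(f_\tau)}}.
\]
Two facts about the normalization are needed. First, $\mathfrak m_{c_p}^{\rm BS}(f_\tau)$ and $\mathfrak v_{c_p}^{\rm BS}(f_\tau)$ converge to the finite limits $\mathfrak m_c^{\rm BS}(f_\tau)$ and $\mathfrak v_c^{\rm BS}(f_\tau)>0$. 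This uses the continuity in $c$ already invoked in Section~\ref{sec:test} together with the strict positivity in Corollary~\ref{cor:lss-clt}. The Joukowski formulas \eqref{eq:BS-joukowski-mean}--\eqref{eq:BS-joukowski-var} make this continuity transparent, since $f_\tau$ is analytic on a fixed neighborhood of $[a_{c'},b_{c'}]$ for $c'$ near $c<1$. Here I assume $c\in(0,1)$ as in the null calibration. Consequently the denominator is bounded and the mean shift is $O(1)$.

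Second, I control the numerator. Apply Theorem~\ref{thm:general-pop-limit} to the Gaussian entries, which have finite fourth moment, with the stated population hypotheses. This gives $p^{-1}T_{p,n}^{(\tau)}=p^{-1}\mathfrak A_\tau(S_p)\to\int f_\tau\,\dd F_{c,H}$ almost surely. Since $c<1$, $S_p\succ0$ eventually, so $T_{p,n}^{(\tau)}$ is literally the spectral statistic. For the centering, Corollary~\ref{cor:mp-bias} applied along the sequence, or directly weak continuity of $c'\mapsto\mu_{\mathrm{MP},c'}$ on a common compact envelope $[0,(1+\sqrt{c^*})^2]$ on which $f_\tau$ is continuous, gives $b_{\tau,c_p}\to b_{\tau,c}$. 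Hence $p^{-1}T_{p,n}^{(\tau)}-b_{\tau,c_p}\to\Delta_{\tau,c,H}>0$ almost surely.

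Combining the two facts, the numerator is at least $p\Delta_{\tau,c,H}/2-O(1)$ eventually almost surely, so $Z_{p,n}^{(\tau)}\to+\infty$ almost surely. In particular $\Pr(Z_{p,n}^{(\tau)}>z_{1-\alpha})\to1$ by dominated convergence. The only delicate step is the continuity of the Bai--Silverstein functionals in $c_p$, which guarantees that the $O(1)$ corrections cannot offset the $O(p)$ drift. This follows from the explicit Joukowski integrals, since $x_{c'}(\theta)$ depends continuously on $c'$ and $f_\tau$ is uniformly continuous near the support. If $c\ge1$ were intended, the calibration itself is undefined, so I restrict to $c<1$ as in Corollary~\ref{cor:lss-clt}.
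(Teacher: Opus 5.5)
Your proposal is correct and follows the paper's argument. The deformed Marchenko--Pastur limit gives $p^{-1}T_{p,n}^{(\tau)}\to\int f_\tau\,\dd F_{c,H}$, so the numerator of $Z_{p,n}^{(\tau)}$ grows like $p\Delta_{\tau,c,H}$ while the Bai--Silverstein centering and scale stay $O(1)$; you additionally make explicit what the paper leaves implicit, namely $b_{\tau,c_p}\to b_{\tau,c}$, the boundedness and positivity of $\mathfrak m^{\rm BS}_{c_p}(f_\tau)$ and $\mathfrak v^{\rm BS}_{c_p}(f_\tau)$, and the restriction $c<1$, and you obtain almost-sure rather than in-probability divergence.
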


\begin{proof}
The generalized Marchenko--Pastur theorem gives
\[
\frac1pT_{p,n}^{(\tau)}\longrightarrow\int f_\tau\,dF_{c,H}
\qquad\text{almost surely}.
\]
Hence the numerator of $Z_{p,n}^{(\tau)}$ is
\[
p\Delta_{\tau,c,H}+o_{\mathbb P}(p),
\]
whereas the null mean correction and standard deviation are $O(1)$.  The standardized statistic therefore diverges to $+\infty$ in probability.
\end{proof}

\begin{remark}\label{rem:dense-local}
At the smaller population scale
\[
\Sigma_p=I+p^{-1/2}H_p,
\qquad
\normop{H_p}=O(1),
\qquad
p^{-1}\tr H_p^2\to\eta,
\]
the scalar expansion $f_\tau(1+h)=h^2/[4(1+2/\tau)]+O(h^3)$ gives
\[
\Acal_{\rm cov,\tau}(I,\Sigma_p)
\longrightarrow\frac{\eta}{4(1+2/\tau)}.
\]
This is a population-geometric statement.  Sample-level local power additionally depends on the $O(1)$ LSS mean shift under a contiguous covariance deformation and is not asserted here.
\end{remark}

The results in this subsection are consequences of the one-sample LSS benchmark and are not used in the two-sample free-probability arguments.  The representative finite-sample power comparison remains in the Supplementary Numerical Material.

\bibliographystyle{unsrt}
\bibliography{main}

@article{MarchenkoPastur1967,
  author = {V. A. Marchenko and L. A. Pastur},
  title = {{Distribution of eigenvalues for some sets of random matrices}},
  journal = {Math. USSR-Sb.},
  volume = {1},
  year = {1967},
  pages = {457--483}
}

@article{SilversteinChoi1995,
  author = {J. W. Silverstein and S. I. Choi},
  title = {{Analysis of the limiting spectral distribution of large dimensional random matrices}},
  journal = {J. Multivariate Anal.},
  volume = {54},
  number = {2},
  year = {1995},
  pages = {295--309},
  doi = {10.1006/jmva.1995.1058}
}

@article{BaiSilverstein2004,
  author = {Z. D. Bai and J. W. Silverstein},
  title = {{CLT for linear spectral statistics of large-dimensional sample covariance matrices}},
  journal = {Ann. Probab.},
  volume = {32},
  number = {1A},
  year = {2004},
  pages = {553--605}
}

@book{BaiSilverstein2010,
  author = {Z. D. Bai and J. W. Silverstein},
  title = {{Spectral Analysis of Large Dimensional Random Matrices}},
  edition = {2},
  publisher = {Springer},
  address = {New York},
  year = {2010}
}

@book{CouilletDebbah2011,
  author = {R. Couillet and M. Debbah},
  title = {{Random Matrix Methods for Wireless Communications}},
  publisher = {Cambridge University Press},
  address = {Cambridge},
  year = {2011}
}

@book{YaoZhengBai2015,
  author = {J. Yao and S. Zheng and Z. D. Bai},
  title = {{Large Sample Covariance Matrices and High-Dimensional Data Analysis}},
  publisher = {Cambridge University Press},
  address = {Cambridge},
  year = {2015}
}

@article{HachemLoubatonNajim2007,
  author = {W. Hachem and P. Loubaton and J. Najim},
  title = {{Deterministic equivalents for certain functionals of large random matrices}},
  journal = {Ann. Appl. Probab.},
  volume = {17},
  number = {3},
  year = {2007},
  pages = {875--930},
  doi = {10.1214/105051606000000925}
}

@article{ChenZhengZou2026,
  author = {W. Chen and S. Zheng and T. Zou},
  title = {{Spectral properties of high-dimensional rescaled sample correlation matrices}},
  journal = {Random Matrices: Theory Appl.},
  volume = {15},
  number = {2},
  year = {2026},
  pages = {2550030},
  doi = {10.1142/S2010326325500303}
}

@article{WangEtAl2026,
  author = {Q. Wang and R. Lin and X. Wang and J. Chen},
  title = {{An integrated test on the linear structure of high-dimensional covariance matrices}},
  journal = {Random Matrices: Theory Appl.},
  volume = {15},
  number = {1},
  year = {2026},
  pages = {2550027},
  doi = {10.1142/S2010326325500273}
}

@article{Gelbrich1990,
  author = {M. Gelbrich},
  title = {{On a formula for the $L^2$ Wasserstein metric between measures on Euclidean and Hilbert spaces}},
  journal = {Math. Nachr.},
  volume = {147},
  year = {1990},
  pages = {185--203}
}

@article{Takatsu2011,
  author = {A. Takatsu},
  title = {{Wasserstein geometry of Gaussian measures}},
  journal = {Osaka J. Math.},
  volume = {48},
  year = {2011},
  pages = {1005--1026}
}

@article{BhatiaJainLim2019,
  author = {R. Bhatia and T. Jain and Y. Lim},
  title = {{On the Bures--Wasserstein distance between positive definite matrices}},
  journal = {Expo. Math.},
  volume = {37},
  year = {2019},
  pages = {165--191}
}

@inproceedings{TiomokoCouillet2019,
  author = {M. Tiomoko and R. Couillet},
  title = {{Random matrix-improved estimation of the Wasserstein distance between two centered Gaussian distributions}},
  booktitle = {Proc. 27th Eur. Signal Process. Conf. (EUSIPCO)},
  year = {2019},
  pages = {1--5}
}

@article{YangZhang2026,
  author = {J. Yang and Y. Zhang},
  title = {{Closed forms for Gaussian Kullback--Leibler unbalanced optimal transport without coupling entropy}},
  journal = {arXiv preprint arXiv:2605.02497},
  year = {2026}
}

@phdthesis{JanatiThesis2021,
  author = {H. Janati},
  title = {{Advances in optimal transport and applications to neuroscience}},
  school = {Institut Polytechnique de Paris},
  year = {2021},
  note = {NNT: 2021IPPAG001}
}

@inproceedings{JanatiEtAl2020,
  author = {H. Janati and B. Muzellec and G. Peyr{\'e} and M. Cuturi},
  title = {{Entropic optimal transport between unbalanced Gaussian measures has a closed form}},
  booktitle = {Adv. Neural Inf. Process. Syst.},
  volume = {33},
  year = {2020}
}

@article{NakashimaEtAl2026,
  author = {H. Nakashima and S. Ganguly and K. Kashima},
  title = {{Globally solving unbalanced optimal transport and density control for Gaussian distributions}},
  journal = {arXiv preprint arXiv:2605.04246},
  year = {2026}
}

@article{LieroMielkeSavare2018,
  author = {M. Liero and A. Mielke and G. Savar{\'e}},
  title = {{Optimal entropy-transport problems and a new Hellinger--Kantorovich distance between positive measures}},
  journal = {Invent. Math.},
  volume = {211},
  year = {2018},
  pages = {969--1117}
}

@article{ChizatEtAl2018,
  author = {L. Chizat and G. Peyr{\'e} and B. Schmitzer and F.-X. Vialard},
  title = {{Scaling algorithms for unbalanced optimal transport problems}},
  journal = {Math. Comp.},
  volume = {87},
  number = {314},
  year = {2018},
  pages = {2563--2609},
  doi = {10.1090/mcom/3303}
}

@book{VoiculescuDykemaNica1992,
  author = {D. V. Voiculescu and K. J. Dykema and A. Nica},
  title = {{Free Random Variables}},
  series = {CRM Monograph Series},
  volume = {1},
  publisher = {American Mathematical Society},
  address = {Providence, RI},
  year = {1992}
}

@book{NicaSpeicher2006,
  author = {A. Nica and R. Speicher},
  title = {{Lectures on the Combinatorics of Free Probability}},
  series = {London Mathematical Society Lecture Note Series},
  volume = {335},
  publisher = {Cambridge University Press},
  address = {Cambridge},
  year = {2006}
}

@article{BelinschiAtoms2003,
  author = {S. T. Belinschi},
  title = {{The atoms of the free multiplicative convolution of two probability distributions}},
  journal = {Integral Equations Operator Theory},
  volume = {46},
  number = {4},
  year = {2003},
  pages = {377--386},
  doi = {10.1007/s00020-002-1145-4}
}

@article{BercoviciVoiculescu1993,
  author = {H. Bercovici and D. Voiculescu},
  title = {{Free convolution of measures with unbounded support}},
  journal = {Indiana Univ. Math. J.},
  volume = {42},
  number = {3},
  year = {1993},
  pages = {733--773},
  doi = {10.1512/iumj.1993.42.42033}
}

@article{BaiYin1993,
  author = {Z. D. Bai and Y. Q. Yin},
  title = {{Limit of the smallest eigenvalue of a large dimensional sample covariance matrix}},
  journal = {Ann. Probab.},
  volume = {21},
  number = {3},
  year = {1993},
  pages = {1275--1294},
  doi = {10.1214/aop/1176989118}
}

@article{NajimYao2016,
  author = {J. Najim and J. Yao},
  title = {{Gaussian fluctuations for linear spectral statistics of large random covariance matrices}},
  journal = {Ann. Appl. Probab.},
  volume = {26},
  number = {3},
  year = {2016},
  pages = {1837--1887},
  doi = {10.1214/15-AAP1135}
}

@article{RaoEdelman2008,
  author = {N. Raj Rao and A. Edelman},
  title = {{The polynomial method for random matrices}},
  journal = {Found. Comput. Math.},
  volume = {8},
  number = {6},
  year = {2008},
  pages = {649--702},
  doi = {10.1007/s10208-007-9013-x}
}

@article{Ji2021,
  author = {H. C. Ji},
  title = {{Regularity properties of free multiplicative convolution on the positive line}},
  journal = {Int. Math. Res. Not. IMRN},
  volume = {2021},
  number = {6},
  year = {2021},
  pages = {4522--4563},
  doi = {10.1093/imrn/rnaa152}
}

@article{Redelmeier2014,
  author = {C. E. I. Redelmeier},
  title = {{Real second-order freeness and the asymptotic real second-order freeness of several real matrix models}},
  journal = {Int. Math. Res. Not. IMRN},
  volume = {2014},
  number = {12},
  year = {2014},
  pages = {3353--3395}
}

@article{CollinsMale2014,
  author  = {Beno\^{\i}t Collins and Camille Male},
  title   = {The strong asymptotic freeness of {Haar} and deterministic matrices},
  journal = {Annales Scientifiques de l'\'Ecole Normale Sup\'erieure},
  volume  = {47},
  number  = {1},
  pages   = {147--163},
  year    = {2014},
  doi     = {10.24033/asens.2211}
}

@article{Biane1998,
  author = {P. Biane},
  title = {{Processes with free increments}},
  journal = {Math. Z.},
  volume = {227},
  number = {1},
  year = {1998},
  pages = {143--174},
  doi = {10.1007/PL00004363}
}

@article{BelinschiBercovici2007,
  author = {S. T. Belinschi and H. Bercovici},
  title = {{A new approach to subordination results in free probability}},
  journal = {J. Anal. Math.},
  volume = {101},
  year = {2007},
  pages = {357--365},
  doi = {10.1007/s11854-007-0013-1}
}

@article{ChizatDynamic2018,
  author = {L. Chizat and G. Peyr{\'e} and B. Schmitzer and F.-X. Vialard},
  title = {{Unbalanced optimal transport: Dynamic and Kantorovich formulations}},
  journal = {J. Funct. Anal.},
  volume = {274},
  number = {11},
  year = {2018},
  pages = {3090--3123},
  doi = {10.1016/j.jfa.2018.03.008}
}

@incollection{SejournePeyreVialard2023,
  author = {T. S{\'e}journ{\'e} and G. Peyr{\'e} and F.-X. Vialard},
  title = {{Unbalanced optimal transport, from theory to numerics}},
  booktitle = {Handbook of Numerical Analysis},
  volume = {24},
  publisher = {Elsevier},
  year = {2023},
  pages = {407--471},
  doi = {10.1016/bs.hna.2022.11.003}
}

@article{ZhengBaiYao2015,
  author = {S. Zheng and Z. D. Bai and J. Yao},
  title = {{Substitution principle for CLT of linear spectral statistics of high-dimensional sample covariance matrices with applications to hypothesis testing}},
  journal = {Ann. Statist.},
  volume = {43},
  number = {2},
  year = {2015},
  pages = {546--591},
  doi = {10.1214/14-AOS1292}
}

@article{CortinovisYing2025,
  author = {A. Cortinovis and L. Ying},
  title = {{Computing free convolutions via contour integrals}},
  journal = {Random Matrices Theory Appl.},
  volume = {14},
  number = {1},
  year = {2025},
  pages = {2450024},
  doi = {10.1142/S2010326324500242}
}

@article{SpeicherVargas2012,
  author = {R. Speicher and C. Vargas},
  title = {{Free deterministic equivalents, rectangular random matrix models, and operator-valued free probability theory}},
  journal = {Random Matrices Theory Appl.},
  volume = {1},
  number = {2},
  year = {2012},
  pages = {1150008},
  doi = {10.1142/S2010326311500080}
}

@article{CouilletTiomokoZozorMoisan2019,
  author = {R. Couillet and M. Tiomoko and S. Zozor and E. Moisan},
  title = {{Random matrix-improved estimation of covariance matrix distances}},
  journal = {J. Multivar. Anal.},
  volume = {174},
  year = {2019},
  pages = {104531},
  doi = {10.1016/j.jmva.2019.06.009}
}

@article{PereiraMestreGregoratti2024,
  author = {R. Pereira and X. Mestre and D. Gregoratti},
  title = {{Asymptotics of distances between sample covariance matrices}},
  journal = {IEEE Trans. Signal Process.},
  volume = {72},
  year = {2024},
  pages = {1460--1474},
  doi = {10.1109/TSP.2024.3368771}
}

@inproceedings{BouchardEtAl2024,
  author = {F. Bouchard and A. Mian and M. Tiomoko and G. Ginolhac and F. Pascal},
  title = {{Random matrix theory improved Fr{\'e}chet mean of symmetric positive definite matrices}},
  booktitle = {Proceedings of the 41st International Conference on Machine Learning},
  series = {Proceedings of Machine Learning Research},
  volume = {235},
  pages = {4403--4415},
  year = {2024},
  publisher = {PMLR}
}

@inproceedings{VacherVialard2023,
  author = {A. Vacher and F.-X. Vialard},
  title = {{Semi-dual unbalanced quadratic optimal transport: Fast statistical rates and convergent algorithm}},
  booktitle = {Proceedings of the 40th International Conference on Machine Learning},
  series = {Proceedings of Machine Learning Research},
  volume = {202},
  pages = {34734--34758},
  year = {2023},
  publisher = {PMLR}
}

@article{GallouetGhezziVialard2025,
  author = {T. Gallou{\"e}t and R. Ghezzi and F.-X. Vialard},
  title = {{Regularity theory and geometry of unbalanced optimal transport}},
  journal = {J. Funct. Anal.},
  volume = {289},
  number = {7},
  year = {2025},
  pages = {111042},
  doi = {10.1016/j.jfa.2025.111042}
}

@article{PonnopratIsobeImaizumi2026,
  author = {D. Ponnoprat and N. Isobe and M. Imaizumi},
  title = {{Minimax optimal estimation of transport-growth pairs in unbalanced optimal transport}},
  journal = {arXiv preprint arXiv:2605.08705},
  year = {2026}
}

@article{LodhiaLevinLevina2022,
  author  = {Asad Lodhia and Keith Levin and Elizaveta Levina},
  title   = {Matrix means and a novel high-dimensional shrinkage phenomenon},
  journal = {Bernoulli},
  volume  = {28},
  number  = {4},
  pages   = {2578--2605},
  year    = {2022},
  doi     = {10.3150/21-BEJ1430}
}

@article{CollinsHayase2023,
  author = {Beno{\^\i}t Collins and Tomohiro Hayase},
  title = {{Asymptotic freeness of layerwise Jacobians caused by invariance of multilayer perceptron: the Haar orthogonal case}},
  journal = {Commun. Math. Phys.},
  volume = {397},
  number = {1},
  year = {2023},
  pages = {85--109},
  doi = {10.1007/s00220-022-04441-7}
}

\end{document}